\documentclass[12pt]{article}

\usepackage{amsmath}
\usepackage{amssymb}
\usepackage{amsfonts}
\usepackage{amsthm} 
\usepackage{enumerate}

\newtheorem{theorem}{Theorem}[section]
\newtheorem{proposition}[theorem]{Proposition}
\newtheorem{lemma}[theorem]{Lemma}
\newtheorem{definition}[theorem]{Definition}
\newtheorem{corollary}[theorem]{Corollary}

\numberwithin{equation}{section}

\newcommand{\rr}{{\mathbb R}}
\newcommand{\zz}{{\mathbb Z}}
\newcommand{\nn}{{\mathbb N}}
\newcommand{\cc}{{\mathbb C}}

\newcommand{\qq}{{\mathbb Q}}

\newcommand{\one}{{\bf 1}}

\newcommand{\supp}{\hbox{supp\,}}

\newcommand{\R}{\mathbb{R}}

\newcommand{\C}{\mathbb{C}}

\begin{document}

\title{Buffon's needle estimates for rational product Cantor sets}
\author{Matthew Bond, Izabella {\L}aba, Alexander Volberg}
\date{June 20, 2012}
\maketitle





\allowdisplaybreaks{



\begin{abstract}
Let $S_\infty=A_\infty\times B_\infty$ be a self-similar product Cantor set in the complex plane, defined via
$S_\infty=\bigcup_{j=1}^L T_j(S_\infty)$, where $T_j:\C\to\C$ have the form $T_j(z)=\frac1{L}z+z_j$ and $\{z_1,\dots,z_L\}=A+iB$ for some $A,B\subset\rr$ with $|A|,|B|>1$ and $|A||B|=L$.
Let $S_N$ be the $L^{-N}$-neighbourhood of $S_\infty$, or equivalently (up to constants), its
$N$-th Cantor iteration.
We are interested in the asymptotic behaviour as $N\to\infty$ of the {\it Favard length} of $S_N$, defined as the average (with respect to direction) length of its 1-dimensional 
projections. If the sets
$A$ and $B$ are rational and have cardinalities at most 6, then the Favard length of $S_N$ is bounded from above by $CN^{-p/\log\log N}$ for some $p>0$. The same result holds with no restrictions on
the size of $A$ and $B$ under certain implicit conditions concerning the generating functions of these sets. This generalizes the earlier results of Nazarov-Perez-Volberg, {\L}aba-Zhai, and Bond-Volberg.
\end{abstract}

\section{Introduction}

The \textbf{Buffon needle probability}, or \textbf{Favard length}, of a compact set $S\subset\C$  is defined as
\begin{equation}\label{favard}Fav(S):=\frac1{\pi}\int_0^\pi |proj_\theta(S)|d\theta,\end{equation}
where $proj_\theta$ denotes the orthogonal projection onto the line forming the angle $\theta$ with the positive real axis, and $|F|$ denotes the Lebesgue measure of $F$ regarded as a subset of $\R$. Pointwise, one defines $proj_\theta(re^{i\theta'}):=r\cdot\cos(\theta'-\theta)$.

We will be concerned with the Favard length of small neighbourhoods of one-dimensional self-similar Cantor sets in the plane. Specifically, let $T_1,\dots,T_L:\C\to\C$ be  similarity maps of the form $T_j(z)=\frac1{L}z+z_j$, where $z_1,\dots,z_L$ are distinct and not colinear. We then define $S_\infty$ to be the unique compact set such that $S_\infty=\bigcup_{j=1}^L T_j(S_\infty)$. 
It is well known that such a set exists, has Hausdorff dimension at most 1 (equality follows if the \textbf{open set condition} holds, see  \cite{mattila}) and finite $H^1$ measure.

Since the $z_j$ are not colinear, $S_\infty$ is unrectifiable, hence it follows from a theorem of Besicovitch that $|proj_\theta(S_\infty)|=0$ for almost every $\theta$ (see \cite{mattila}). It follows that
\begin{equation}\label{manticore}
\lim_{N\to\infty}Fav(S_N)=Fav(S_\infty)=0,
\end{equation}
where $S_N$ is the $\epsilon=L^{-N}$-neighborhood of $S_\infty$.
The question of interest concerns the rate of decay in (\ref{manticore}). The first general quantitative upper bound 
$Fav(S_N)\leq C\exp(-C\log^*N)$
is due to Peres and Solomyak \cite{PS1}; here,  $\log^*N$ denotes the number of iterations of the $\log$ function needed to have
$\log\dots\log N\lesssim 1$. (See also \cite{Tao} for a weaker result in a more general setting.) More recently, power type bounds
$Fav(S_N)\leq CN^{-p}$ with $p>0$ have been proved by Nazarov, Peres and Volberg \cite{NPV} for the 4-corner Cantor set, Bond and Volberg \cite{BV1}
for $L=3$ (the 1-dimensional Sierpi\'nski triangle), and {\L}aba and Zhai \cite{LZ} for rational product Cantor sets with a ``tiling" condition, namely that
$|proj_{\theta_0}(S_\infty)|>0$ for some direction $\theta_0$. The best known result for general self-similar sets is $Fav(S_N)\lesssim e^{-c\sqrt{\log N}}$, 
due to Bond and Volberg \cite{BV3}.

In the converse direction, 
Mattila \cite{Mattila} proved that $Fav(S_N)\geq CN^{-1}$ for a wide class of sets including the general self-similar case.
Bateman and Volberg \cite{BV} proved that for the 4-corner set, the lower bound is at least $C N^{-1}\log N,$
showing that at least in this case Mattila's lower bound is not optimal.  However, Peres and Solomyak prove in \cite{PS1} that for ``random 4-corner sets"
the expected asymptotics is in fact $CN^{-1}$.

Our first result is a power bound for general self-similar sets with $L=4$.

\begin{theorem}\label{thm L4}
Suppose $L=4$. Then $Fav(S_N)\lesssim N^{-p}$ for some $p>0$.
\end{theorem}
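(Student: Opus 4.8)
The plan is to run the Fourier-analytic scheme of Nazarov--Peres--Volberg \cite{NPV}, in the form later used by \cite{LZ} and \cite{BV3}, and to execute the arithmetic step — which in general is the source of the weak $e^{-c\sqrt{\log N}}$ bound — sharply for $L=4$, where it reduces to a question about $4$-term exponential sums with only three essential frequencies. Let $\mu$ be the natural self-similar probability measure on $S_\infty$, so $\widehat\mu(\xi)=\prod_{k\ge 0}\varphi(L^{-k}\xi)$ with $\varphi(\xi)=\frac1L\sum_{j=1}^{L}e^{2\pi i\langle z_j,\xi\rangle}$, and let $\mu_N$ be the corresponding measure on $S_N$. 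By the standard reduction (cf. \cite{NPV,PS1} and the self-contained presentations in \cite{LZ,BV3}), $Fav(S_N)\lesssim N^{-p}$ will follow once we produce $R=R(N)$, a power of $N$, and an exceptional set $\mathcal E_N\subset[0,\pi]$ with $|\mathcal E_N|\lesssim N^{-p}$, such that for every $\theta\notin\mathcal E_N$ the radial slice $t\mapsto\widehat{\mu_N}(te_\theta)$, $e_\theta=(\cos\theta,\sin\theta)$, is small in the appropriate $L^2$/distributional sense on the range $1\le t\le R$. Since $|\widehat{\mu_N}(te_\theta)|$ is, up to a harmless thickening cutoff, the Riesz product $\prod_{k\ge 0}|\varphi_\theta(L^{-k}t)|$ with $\varphi_\theta(s):=\varphi(se_\theta)$, and this product evaluated at $t\in[L^m,L^{m+1}]$ equals $\prod_{l=0}^{m}|\varphi_\theta(L^{l}u)|$ for $t=L^m u$, $u\in[1,L]$, the task is to show that these finite products decay geometrically in $m$ for all $u$ outside a negligible set, provided $\theta\notin\mathcal E_N$.

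\textbf{Single-scale dichotomy.} Factoring out the $j=1$ term, $|\varphi_\theta(s)|=\frac1L\bigl|1+\sum_{j=2}^{L}e^{2\pi i\beta_j(\theta)s}\bigr|$ with $\beta_j(\theta):=\langle z_j-z_1,e_\theta\rangle$; for $L=4$ there are exactly three frequencies $\beta_2,\beta_3,\beta_4$. We have $|\varphi_\theta|\le 1$, and a second-order Taylor expansion at a point where $|\varphi_\theta|=1$ shows that $|\varphi_\theta(s)|\ge 1-\eta$ forces $\dist(\beta_j(\theta)s,\Z)\lesssim\sqrt{\eta}$ for $j=2,3,4$. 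Thus $\{u\in[1,L]:|\varphi_\theta(u)|>1-\eta\}\subseteq\{u:\dist(\beta_j(\theta)u,\Z)\lesssim\sqrt{\eta}\text{ for }j=2,3,4\}$, a set whose measure is $\lesssim\eta^{3/2}$ \emph{unless} the vector $\beta(\theta):=(\beta_2,\beta_3,\beta_4)(\theta)$ is \emph{$q$-resonant}, meaning it lies within $O(\sqrt{\eta}/q)$ of $\frac1q\Z^3$ for some small $q$. Accordingly we let $\mathcal E_N$ be the set of $\theta$ for which $\beta(\theta)$ is $q$-resonant, at the appropriate resolution, for some $q\le Q(N)$. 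For $\theta\notin\mathcal E_N$ the dilated orbits $l\mapsto(\beta_2L^lu,\beta_3L^lu,\beta_4L^lu)$ do not stay near $\Z^3$ for a definite proportion of the first $m$ scales, and an argument in the spirit of \cite{NPV} — locating a non-flat scale inside every block of boundedly many consecutive scales — then gives $\prod_{l=0}^m|\varphi_\theta(L^lu)|\le\rho^{cm}$ off a $u$-set of measure $\lesssim L^{-cm}$, with $\rho<1$ and $c>0$ depending only on $\{z_1,\dots,z_L\}$; this is the decay required above.

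\textbf{Size of the exceptional set.} The only place the hypothesis of the theorem enters is the estimate of $|\mathcal E_N|$. Writing $z_j-z_1=x_j'+iy_j'$, the map $\theta\mapsto\beta(\theta)=\cos\theta\,(x_2',x_3',x_4')+\sin\theta\,(y_2',y_3',y_4')$ traces an ellipse in $\R^3$; since $z_1,\dots,z_4$ are not colinear, the vectors $(x_j')_{j\ge 2}$ and $(y_j')_{j\ge 2}$ are linearly independent, so this ellipse spans a genuine $2$-plane $\Pi$ through the origin, is not a line segment, and has speed bounded away from $0$. Hence it meets few points of $\frac1q\Z^3$: its $\delta$-neighborhood contains $\lesssim\delta^2q^3$ such points when $\delta>1/q$, and a number controlled by the Diophantine type of $\Pi$ otherwise, whence $|\{\theta:\beta(\theta)\text{ is }q\text{-resonant at scale }\delta\}|\lesssim q^{O(1)}\delta$. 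Summing over dyadic $\delta$ down to the relevant resolution and over $q\le Q(N)$, and choosing $R$ and $Q$ to be appropriate powers of $N$ so as to balance this against the geometric gain of the previous step, yields $|\mathcal E_N|\lesssim N^{-p}$, and with it the theorem.

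\textbf{Expected main difficulty.} The delicate part is the interface between the last two steps: one must make precise how near-maximality of $|\varphi_\theta|$ at many scales forces low-denominator structure on $\beta(\theta)$ \emph{uniformly over the window $u\in[1,L]$}, and then run the lattice-point count with exponents chosen so that the exponential gain for non-resonant $\theta$ dominates while the resonant directions still contribute only $N^{-p}$ — a genuine power, rather than the $N^{-p/\log\log N}$ of the general product case. It is here that $L=4$ is decisive: with only three frequencies the near-maximal set of the $4$-term exponential sum is controlled directly by a $3$-dimensional equidistribution estimate, with no need to iterate over the arithmetic (cyclotomic) structure of generating polynomials, and the associated divisor and lattice-point counts carry no $\log\log N$ loss.
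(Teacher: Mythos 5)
There is a fundamental gap: your reduction runs in the wrong direction. In the Nazarov--Peres--Volberg scheme (which this paper follows, see Section 2), the exceptional set of directions is $\tilde E=\{t:\|f_{N,t}\|_2^2\lesssim K\}$, i.e.\ precisely the directions along which $\int_1^{L^{N/2}}|\widehat{\nu_N}|^2$ is \emph{small}; for directions outside $\tilde E$ the bound $|proj_\theta(S_{NK^3})|\lesssim 1/K$ comes from the combinatorial stacking lemma (Lemma \ref{combin2}), not from Fourier decay, and the analytic heart of the proof is a \emph{lower} bound $\int_{L^{-m}}^1\prod_{k}|\phi_t(L^k\xi)|^2\,d\xi\gtrsim KL^{-n}N^{-\alpha\epsilon_0}$ for suitable $t$ in the candidate exceptional set, which contradicts membership in $\tilde E$ and so forces $|\tilde E|$ to be small. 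Your proposal instead asks for geometric \emph{decay} of the Riesz product $\prod_{l\le m}|\varphi_\theta(L^l u)|$ off a tiny $u$-set for all non-resonant $\theta$, and asserts that such smallness of $\widehat{\mu_N}$ along the slice yields $|proj_\theta(S_N)|$ small. It does not: by Cauchy--Schwarz, $1\le |proj_\theta(S_N)|\cdot\int|\widehat{(proj_\theta)_*\mu_N}(t)|^2dt$, so an upper bound on the $L^2$ norm of the slice forces the projection to have \emph{large} support. Even granting your single-scale dichotomy and the lattice-point count on the ellipse traced by $\beta(\theta)$, the estimates you produce bound the wrong side of the inequality and give no upper bound on $Fav(S_N)$; if anything, they would place most directions inside the bad set $\tilde E$. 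Relatedly, your dichotomy analyzes the set where $|\varphi_\theta|$ is near its maximum $1$, whereas the quantity that must be controlled is the set where the partial product is near $0$ (the SSV set), which is governed by the zeros of $\phi_\theta$ and their recurrence under $\xi\mapsto L\xi$, a different phenomenon.

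For comparison, the paper's proof of Theorem \ref{thm L4} keeps the NPV/\cite{BV3} framework intact and supplies the one missing ingredient: the SSV property of $\phi_\theta$, uniformly in $\theta$, when $L=4$. After normalizing $z_1=0$, $z_2=1$, $z_3=i$, a $4$-term exponential sum can vanish only via two annihilating pairs, which yields the pseudofactorization $|e^{ix_1}+e^{ix_2}+e^{ix_3}+e^{ix_4}|\gtrsim\prod_{j<k}|\cos(\tfrac{x_j-x_k}{2})|$; each cosine factor is then handled by the telescoping double-angle identity, giving $|\prod_{k=0}^m\cos(4^k\alpha\xi)|\gtrsim 4^{-m}|\sin(2\cdot4^m\alpha\xi)/\sin(\alpha\xi)|$, whose small-value set is explicitly a union of exponentially short intervals of controlled number, uniformly in $\alpha$. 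With this SSV property in hand, the Salem-type lower bound off the SSV set goes through and Proposition \ref{strong result} gives the power bound. If you want to salvage your approach, the resonance/equidistribution analysis would have to be redirected at proving a lower bound of the form \eqref{totalest prime} (or the SSV covering statement), not at proving decay of $\widehat{\mu_N}$ for generic directions.
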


The main result of this paper concerns the rational product set case, where $\{z_j\}_{j=1}^L =A\times B$ for some $A,B\subset\qq$. 
Without loss of generality, we may assume that $A,B\subset\zz$ and $\min(A)=\min(B)=0$. Define $S_N=A_N\times B_N+\{z\in\cc:|z|<L^{-N}\}$, where $A_1:=A$ and $A_{N+1}:=A_N+L^{-N-1}A$, and similarly for $B$; this is slightly inconsistent with the general definition of $S_N$ that we gave above, but equivalent to it up to constants and more convenient to use.

\begin{theorem}\label{main thm}
If $S_N=A_N\times B_N$ and $|A|,|B|\leq 6$, then $Fav(S_N)\lesssim N^{-p/\log\log N}$ for some $p>0$.
\end{theorem}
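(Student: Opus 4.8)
We outline the argument. The plan is to run the Nazarov--Peres--Volberg Fourier scheme (as in \cite{NPV,BV1,LZ}) direction by direction, reducing the bound on $Fav(S_N)$ to a quantitative statement about how often the backward orbit of multiplication by $L$ meets the zero set of the one-dimensional generating function of $S_\infty$, and then to extract that statement from the cyclotomic factorization of the generating polynomials of $A$ and $B$, which is rigid precisely because $|A|,|B|\le 6$.

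First I would set up the generating functions. For $A\subset\zz_{\ge 0}$ with $\min A=0$ put $\phi_A(t)=|A|^{-1}\sum_{a\in A}e^{2\pi i at}$, similarly $\phi_B$, and for a direction $\theta$ let $\psi_\theta(t):=\phi_A(t\cos\theta)\,\phi_B(t\sin\theta)$, the Fourier transform of the projection onto the $\theta$-line of one generation of the product set; note $|\psi_\theta|\le 1$ everywhere. The projection of the natural probability measure $\mu_N$ on $S_N$ onto the $\theta$-line has Fourier transform comparable, for $|\xi|\lesssim L^N$, to $\prod_{k=1}^N\psi_\theta(L^{-k}\xi)$ times a harmless smoothing bump coming from the $L^{-N}$-thickening. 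The NPV machinery then reduces ``$Fav(S_N)\lesssim\eta_N$'' to the following: outside a set of directions $\mathcal E_N$ with $|\mathcal E_N|\lesssim\eta_N$, and for all but a small-measure set of $\xi\in[1,L^N]$, a definite number of the scales $k\le N$ satisfy $L^{-k}\xi\in\{|\psi_\theta|\le 1-\epsilon_0\}$ for a fixed $\epsilon_0>0$; indeed then $\prod_{k=1}^N|\psi_\theta(L^{-k}\xi)|\le(1-\epsilon_0)^{(\text{that count})}$, which is small in the windowed-$L^2$ sense the scheme requires.

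Next I would locate the zeros. Passing to $t=e^{2\pi i s}$ turns $\phi_A$ into the mask polynomial $\sum_{a\in A}x^a$, which for $|A|\le 6$ factors into cyclotomic polynomials $\Phi_d$ with $d$ confined to a short, explicitly constrained list of prime powers --- this is exactly where $|A|,|B|\le 6$ (or the ``implicit conditions on the generating functions'' of the abstract) is spent. Hence $\{\psi_\theta=0\}$, restricted to the $\theta$-line, is a finite union of arithmetic progressions whose common differences are dictated by those $d$ and rescaled by $\cos\theta$ and $\sin\theta$. The decisive split is on whether $\tan\theta$ is rational: for irrational, not-too-well-approximable $\tan\theta$ the joint orbit $(L^{-k}\xi\cos\theta,\,L^{-k}\xi\sin\theta)\bmod\zz^2$ equidistributes, meets a fixed zero-neighbourhood $\sim cN$ times, and gives decay $\exp(-cN)$; for $\tan\theta=s/t$ in lowest terms the orbit is trapped on a line of slope $s/t$ in $\torus^2$, it cycles (on the relevant residues) with period $\operatorname{ord}_t(L)$, the restriction of $\psi_\theta$ to that line is a trigonometric polynomial of degree $\lesssim t$, and one must verify that this restricted polynomial is not identically constant --- non-constancy here is precisely ``$S_\infty$ has no $\theta$-projection of positive length,'' i.e.\ the failure of the {\L}aba--Zhai tiling hypothesis, which must now be handled rather than assumed --- and then estimate, as a function of $t$ and $\operatorname{ord}_t(L)$, how fast the orbit sweeps out its zeros.

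Finally I would assemble the exceptional set and optimize the thresholds. Cutting denominators off at $Q=Q(N)$, directions $\tan\theta=s/t$ with $t\le Q$ still yield decay like $\exp\big(-cN\,t^{-O(1)}\big)$, hence a power of $N$ provided $Q$ is a small power of $N$ over a logarithm, while the $\theta$ that are only well approximated by larger-denominator rationals form a set of exponentially small measure and are dispatched directly; the loss occurs because, when one tracks which of the $N$ scales can be certified to contribute a small factor, the resonant denominators --- being essentially the primes (cyclotomic $\Phi_p$) --- enter through a harmonic-type sum $\sum 1/p\sim\log\log$, so one can only guarantee of order $\log N/\log\log N$ good scales, whence $Fav(S_N)\lesssim(1-\epsilon_0)^{c\log N/\log\log N}=N^{-p/\log\log N}$. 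The hard part is Step 3 for rational directions: one needs a uniform, clean description of $\{\psi_\theta=0\}$ on every rational line $\tan\theta=s/t$ --- in particular the guarantee that the restricted polynomial is identically constant only for an explicitly understood finite list of directions, together with a $t$- and $\operatorname{ord}_t(L)$-dependent lower bound on how often its zero-neighbourhoods are visited by the (possibly short-period) orbit. It is here that the cyclotomic bookkeeping, hence the cardinality restriction, is indispensable, and here that keeping the overall loss down to a factor $\log\log N$ rather than something larger is delicate.
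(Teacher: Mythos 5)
Your reduction runs in the wrong direction, and this is not cosmetic. In the NPV scheme (Lemmas \ref{combin1}--\ref{combin2}, Proposition \ref{total est goal}), smallness of projections at later scales is forced by \emph{stacking}, i.e.\ by the counting function $f_{N,t}$ being large in $L^2$; for directions in the putative exceptional set one has the \emph{upper} bound $\|f_{N,t}\|_2^2\lesssim K$, hence $\int_{L^{n-m}}^{L^n}\prod_{k=1}^n|\phi_t(L^{-k}\xi)|^2d\xi\lesssim Km/N$, and the whole analytic task is to produce a \emph{lower} bound $\gtrsim KL^{-n}N^{-\alpha\epsilon_0}$ for some such direction, yielding a contradiction. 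Your plan instead asks to certify that $\prod_k|\psi_\theta(L^{-k}\xi)|$ is pointwise small for most $\xi$ (by counting scales with $|\psi_\theta|\le 1-\epsilon_0$); but smallness of the windowed $L^2$ norm of $\widehat{\nu}$ makes $\|f\|_2$ small, which by Cauchy--Schwarz forces the projection to be \emph{large}, so the statement you propose to prove cannot feed back into an upper bound on $Fav(S_N)$. Moreover, even reinterpreted as an SSV-type claim, scale counting is exactly what breaks down here: if $\Phi_s\mid A$ with $(s,L)=1$, a zero $\xi_0\in\frac1s\zz$ of $\phi_A$ recurs under $\xi\mapsto L\xi$, so on a whole interval of length $\sim L^{-c\sqrt m}$ around $\xi_0$ \emph{all} of the first $\sqrt m$ factors are tiny and the product is $\le L^{-100m}$ (Section \ref{ssv fail}); the set of small values is far too large to be covered by few short intervals, and no count of ``good scales'' recovers more than the square-SSV bound of \cite{BV3}, i.e.\ $e^{-c\sqrt{\log N}}$ rather than $N^{-p/\log\log N}$.

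What the paper does at this point is the step your proposal has no counterpart for: Salem's argument on a difference set. One factors $\phi_A=\phi_A'\phi_A''$, keeps the SSV/log-SSV machinery only for the good factor, and for the bad factor constructs an SLV-structured set $\Gamma$ (Definition \ref{slv structured}) --- an intersection of suitably translated approximate cosets of a coarse lattice $\frac1{s_{1,A}}\zz$ avoiding the bad zeros --- with $|\Gamma|\gtrsim KL^{-m}$ and $|P_2''|\ge L^{-C_1m}$ on $\Gamma-\Gamma$, and then runs the $h=|\Gamma|^{-1}\one_\Gamma*\one_{-\Gamma}$ computation to get the lower bound that dominates the Poisson-localized contribution of $[0,L^{-m}]$. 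Your cyclotomic bookkeeping is also off in ways that matter: for $|A|\le 6$ the cyclotomic divisors are \emph{not} confined to prime powers (e.g.\ $\Phi_{12}\mid 1+x^3+x^4+x^8+x^9$), and the hypothesis $|A|,|B|\le 6$ is spent, via the vanishing-sums-of-roots-of-unity analysis, on producing a factorization $s_A=s_{1,A}s_{2,A}$ with $s_{2,A}<|A|$ and no $\Phi_q\mid A$ for $q\mid s_{1,A}$ (Proposition \ref{compatible}), which is what makes $|\Gamma|\ge L^{-(1-\epsilon)m}$ achievable since $s_{2,A}s_{2,B}<L$. Finally, $A(x)$ may have irrational algebraic roots on the unit circle; these are handled by a Baker--Waldschmidt diophantine estimate giving only the log-SSV property (Proposition \ref{liouville}), and \emph{that} --- not a harmonic sum $\sum 1/p$ over resonant primes --- is the source of the $\log\log N$ loss; when such roots are absent the bound improves to a clean power (Theorem \ref{thm-noliouville}). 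Your rational-versus-irrational-$\tan\theta$ dichotomy is likewise not where the difficulty lives: by the product structure the small values of $\phi_A(\xi)$ and $\phi_B(t\xi)$ are analyzed separately and uniformly in $t$.
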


We will make no attempt to find $p$ explicitly, since it will have many dependencies and is unlikely to be close to optimal in any case.

The proof of Theorem \ref{main thm} is based on a new method of estimating so-called ``Riesz products" of trigonometric polynomials. The arguments of  \cite{NPV}, with the additional modifications of \cite{BV1}, \cite{BV3}, \cite{LZ}, have reduced the problem to proving lower bounds on integrals of the form
\begin{equation}\label{intro-e1}
\int_{L^{-m}}^1 \prod_{j=1}^{n} |\phi_t(L^{j}\xi)|^2 d\xi,
\end{equation}
where $t=\tan(\theta)$ and
\begin{equation}\label{phit}
\phi_t(\xi):=\frac1{L}\sum_{(a,b)\in A\times B}e^{2\pi i(a+tb)\xi}
\end{equation}
We can write (\ref{phit}) as $\phi_t(\xi)=\phi_A(\xi)\cdot\phi_B(t\xi)$, where
\begin{equation}\label{phiab}
\phi_A(\xi)=\frac1{|A|}\sum_{a\in A}e^{2\pi ia\xi}\text{ and }\phi_B(\xi)=\frac1{|B|}\sum_{b\in B}e^{2\pi ib\xi}
\end{equation}
For general (not necessarily product) self-similar sets, we defiine $\phi_\theta(\xi)=\frac1{L}\sum_{j=1}^L e^{2\pi ir_j\cos(\theta_j-\theta)}$ instead, where
$z_j=r_je^{2\pi i\theta_j}$ are the similarity centers.

The strategy of \cite{NPV}, \cite{BV1}, \cite{BV3}, \cite{LZ} is now as follows. Write the integrand in (\ref{intro-e1}) as $|P_1(\xi)|^2\,|P_2(\xi)|^2$, where
$$
P_1(\xi)=\prod_{j=m+1}^n \phi_t(L^j\xi),\ \ 
P_2(\xi)=\prod_{j=1}^m \phi_t(L^j\xi),
$$
and consider first the easier task of estimating the integral
$\int_0^1|P_1(\xi)|^2d\xi$. The argument is based on an idea due to Salem. 
It does not use any specific information about the high-frequency part $P_1$, except that it can be written as a long trigonometric polynomial $P_1(\xi)=\sum_{\alpha\in \mathcal{A}}
e^{2\pi i\alpha\xi}$, where $\mathcal{A}\subset\zz$ and $|\mathcal{A}|=L^{n-m}$.

We have $P_j(\xi)=\overline{P_j(-\xi)}$, so that $\int_0^1|P_1|^2=\frac{1}{2}\int_{-1}^1|P_1|^2$.
Let $h(x)=\one_{[0,1/2]}*\one_{[-1/2,0]}$, then $0\leq h\leq C$, $\supp h\subset [-1,1]$ and $\widehat{h}\geq 0$. Therefore
\begin{equation}\label{intro-e2}
\begin{split}
\int_0^1|P_1|^2&\geq C^{-1}\int_{-1}^1 |P_1|^2h(\xi)d\xi\\
&=C^{-1}L^{-2(n-m)} \sum_{\alpha,\alpha'}\int h(\xi)e^{2\pi i(\alpha-\alpha')\xi}d\xi\\
&=C^{-1}L^{-2(n-m)} \sum_{\alpha,\alpha'}\widehat{h}(\alpha-\alpha')\\
&\geq C^{-1}L^{-2(n-m)} \sum_{\alpha=\alpha'}\widehat{h}(0)\\
&\geq C^{-1} L^{-2(n-m)}|\mathcal{A}|=C^{-1} L^{m-n}.\\
\end{split}
\end{equation}

What we actually need is a similar bound on the much more difficult integral in (\ref{intro-e1}).
This brings up two issues. 
One is that we are now integrating over the smaller interval $[L^{-m},1]$, so that we need to have some control of the size of $I_0=\int_0^{L^{-m}}|P_1|^2$. This was accomplished in \cite{NPV} already (see also \cite{BV1}, \cite{BV3}), and the estimates proved there are essentially optimal. 

This leaves us with the crux of the matter, namely replacing the high frequency part $P_1$ in (\ref{intro-e2}) by the full trigonometric polynomial $P_1P_2$. Conceivably, the low frequency part $P_2(\xi)$ might only be large on the interval $[0,L^{-m}]$ -- the one that we have just had to exclude - and very small for $|\xi|\geq L^{-m}$. There would be nothing unusual or pathological about it; this is how trigonometric polynomials are wont to behave, and it is the reason why the estimate on $I_0$ in the last paragraph could only work for the high frequency part. 

What we need to know, then, is just how much harm the $P_2$ part can do to the estimate in (\ref{intro-e2}), the key consideration being that what is left of the estimate must still dominate $I_0$.

In this regard, all of the previous results in \cite{NPV}, \cite{BV1}, \cite{BV3}, \cite{LZ} rely on estimates on the size of the \textbf{set of small values} (SSV) of $P_2$, which we now define.
Let $\varphi:\rr\to\cc$. Let $m$, $c_1$, and $L$ be understood from context, and let $\psi$ have $c_1$ as a parameter. Let
\begin{equation}\label{ssvpropdef}
SSV_\psi:=\{\xi\in[0,1]:|\prod_{k=1}^m\varphi(L^k\xi)|\lesssim\psi(m)\}
\end{equation}
(Note that $P_2(\xi)=\prod_{k=1}^m\phi(L^k\xi)$.)

\begin{definition}\label{SSV prop}
We say that $\varphi$ has \textbf{the SSV property with SSV function }$\psi$ if there exist $c_1,c_2,c_3>0$ with $c_3\gg c_2$ such that $SSV_\psi$ is contained in $L^{c_2m}$ intervals of size $L^{-c_3m}$. In decreasing order of strength:
\begin{itemize}
\item If $\psi(m)=L^{-c_1m}$, we say that $\varphi$ has \textbf{the SSV property}.
\item If $\psi(m)=L^{-c_1m\log m}$, we say that $\varphi$ has \textbf{the log-SSV property}.
\item If $\psi(m)=L^{-c_1m^2}$, we say that $\varphi$ has \textbf{the square-SSV property}.
\end{itemize}
\end{definition}

In our application, the function $\varphi$ will be either $\phi_t$ or one of its factors as defined below, and we will need the constants $c_i$ to be uniform in $t$. For product sets, this is automatic, since the SSV property for $\phi_t$ will follow from the SSV property for just two functions $\phi_{A}$ and $\phi_{B}$. For general self-similar sets with $L=4$, the uniformity will be clear from the proof. Furthermore, we will be able to make the ratio $c_3/c_2$ as large as we wish, at the expense of making $c_1$ large.

If $\phi_A$ and $\phi_B$ have the SSV property, this implies a good bound on the contribution to (\ref{intro-e2}) coming from integration on SSV, so that the remaining portion of that integral still dominates $I_0$, and the integral in (\ref{intro-e1}) is bounded from below by $L^{-2c_1m}$ times the estimate in (\ref{intro-e2}). This is good enough to yield a power type bound.
 
For the sets considered in \cite{NPV}, \cite{LZ}, \cite{BV1}, the SSV property indeed holds. In this paper we extend the same argument further to cover a wider class of product Cantor sets. Furthermore, the case $L=4$ (Theorem \ref{thm L4}) follows without much trouble from \cite{BV3} together with an additional estimate supplied in Section \ref{SSV4}. However, this argument fails already for general self-similar sets with $L=5$, as well as for product sets with $|A|=|B|=5$ (Section \ref{ssv fail}). In this generality, only the square-SSV property is available, leading to the weaker bound of \cite{BV3}. Any further progress beyond that requires a different approach.

What saves the day for us is, first, that while $P_2$ may well be unacceptably small on somewhat longer intervals than the SSV or log-SSV property would allow, it is still reasonably large on most of $[0,1]$; and second, that this remaining part of the interval contains a large \textbf{structured} subset. We emphasize that the structure of the ``good" set is crucial to the argument. Indeed, 
a quick glance at the calculation in  (\ref{intro-e2}) again should convince the reader that it is not enough to simply bound $|P_2|$ from below on some large but otherwise arbitrary subset of $[0,1]$. We need to be able to perform Salem's calculation on the set in question, and that calls for a function $h$ which is supported on that set and has a non-negative Fourier transform. Most generic large subsets of $[0,1]$ fail on that count. However, if we can find a \textbf{difference set} $\Gamma-\Gamma$ such that $\Gamma$ is sufficiently large and $P_2$ is large on $\Gamma-\Gamma$, we will see that then the calculation in (\ref{intro-e2}) does go through in the modified setting. 

The use of Salem's argument on difference sets is the first major idea of this paper. While it appears to be new in this context, it is inspired heavily by similar calculations involving the so-called ``Bohr sets" in additive combinatorics (see e.g. \cite{Bourg1999}). However, we will need to be much more particular about choosing the structured set $\Gamma$. Were we to use the Bohr set associated with the exponents in the trigonometric polynomial $P_2(\xi)$ as it is usually defined in additive combinatorics, that set would not be large enough for our purposes, and could in fact be contained entirely in the small interval $[0,L^{-m}]$ that must be excluded from our integral anyway.

The precise statement of the property we need is as follows.

\begin{definition}\label{slv structured}
Let $\varphi:\cc\to\cc$, and let $L,K,m$ be understood from context. 
We say that $\varphi$ is \textbf{SLV-structured} if there is a Borel set $\Gamma\subset[0,1]$ (in our application, a 
finite union of intervals) and constants $C_1,C_2$ such that:
\begin{equation}\label{slv-e1}
\Gamma-\Gamma\subset\{\xi:\ |\prod_{k=1}^m\varphi(L^k\xi)|\geq L^{-C_1 m}\},
\end{equation}
\begin{equation}\label{slv-e2}
|\Gamma|\geq C_2KL^{-m}.
\end{equation}
\end{definition}

For convenience, we will refer to $\Gamma$ as the \textbf{SLV set} for $\varphi$. 
(This is a slight abuse of terminology, since the actual ``Set of Large Values" here is the set on the right side of (\ref{slv-e1}) rather than $\Gamma$, but that set will play no part in this paper since we have no way of using it efficiently.)

The proof of Theorem \ref{main thm} will combine SLV arguments with the SSV approach via factorization of the functions involved, which we now describe.

We write $\phi_t(\xi)=\phi_A(\xi)\phi_B(t\xi)$, where $\phi_A(\xi)=\frac{1}{|A|}A(e^{2\pi i\xi})$, $A(x)$ is the generating function of $A$:
\begin{equation}\label{generating}
A(x)=\sum_{a\in A} x^a,\,\,\,
\end{equation}
and similarly for $B$.  We are assuming that $A,B\subset\{0,1,2,\dots\}$, so that $A(x)$ and $B(x)$ are polynomials in $\zz[x]$. For our present purposes, it will suffice to consider $A$ and $B$ separately. 

\begin{definition}\label{A1234}
We have $A(x)=\prod_{i=1}^4 A^{(i)}(x)$, where each $A^{(i)}(x)$ is a product of the irreducible factors of $A(x)$ in $\zz[x]$, defined as follows (by convention, an empty product is identically equal to 1):
\begin{itemize}
\item $A^{(1)}(x)=\prod_{s\in S_A^{(1)}}\Phi_s(x)$, $S_A^{(1)}=\{s\in\nn:\ \Phi_s(x)|A(x), (s,L)\neq 1\}$,
\item $A^{(2)}(x)=\prod_{s\in S_A^{(2)}}\Phi_s(x)$, $S_A^{(2)}=\{s\in\nn:\ \Phi_s(x)|A(x),$ $(s,L)=1\}$,
\item $A^{(3)}(x)$ is the product of those irreducible factors of $A(x)$ that have at least one root of the form $e^{2\pi i\xi_0}$, $\xi_0\in\rr\setminus\qq$,
\item $A^{(4)}(x)$ has no roots on the unit circle.
\end{itemize}
We then define the \textbf{good} and \textbf{bad} factors $A'$ and $A''$ of $A$:
\begin{equation}\label{a'a''}
A'(x):=A^{(1)}(x)A^{(3)}(x)A^{(4)}(x).\ A''(x):=A^{(2)}(x).
\end{equation}

\end{definition}

Here, $\Phi_s(x)$ denotes the $s$-th cyclotomic polynomial (see (\ref{cyclo})). Let also $\phi_A^{(i)}(\xi)=A^{(i)}(e^{2\pi i\xi})$, $\phi_A'(\xi)=A'(e^{2\pi i\xi})$, and $\phi_A''(\xi)=A''(e^{2\pi i\xi})$.

Clearly, the factor $\phi_A^{(4)}$ does not contribute to the small values of $P_2$ and can be safely ignored. The factor $\phi_A^{(1)}$ has the SSV property; this was used in a weaker and somewhat camouflaged form in \cite{NPV}, \cite{LZ}, \cite{BV1}. Furthermore, we will prove in Proposition \ref{liouville} that $\phi_A^{(3)}$ has the log-SSV property. This turns out to be related to diophantine approximation of logarithms of algebraic numbers on the unit circle. Roughly speaking, we will rely on the fact that if $e^{2\pi i\xi_0}$  is a root of $A(x)$ with $\xi_0\in\rr\setminus\qq$, then $\xi_0$ cannot be approximated too well by rational numbers. The precise statement we will invoke is a variant of Baker's Theorem in transcendental number theory. Combining these results, we see that $\phi_A'$ has at least the log-SSV property.

The failure of anything better than a square-SSV property is thus due to the ``bad" factor $\phi_A''$, indeed unavoidable unless $A''\equiv 1$, and it is this factor that the construction of $\Gamma$ will have to accommodate. Very roughly speaking, the set of ``bad zeroes" $\xi$ of $\phi_A$, viewed as a subset of $\rr$, is a lattice with coarser lattices removed from it. If we choose $\Gamma_0$ to be a neighbourhood of one of these coarser lattices, it will have the structure of an approximate additive group and will also avoid the set of bad zeroes of $\phi_A$. We would like for the set $\Gamma$ to be defined as the intersection of rescaled copies of such sets, but as such intersections may not always have the generic size, we instead choose $\Gamma$ to be an intersection of appropriately chosen approximate cosets of the coarse lattice, using pigeonholing to ensure that $|\Gamma|$ is large enough and keeping in mind $\Gamma-\Gamma$ will still be contained in a neighborhood of the coarse lattice.

The challenge is in ensuring that the coarse lattice, therefore $\Gamma_0$, are large enough.
This will involve a detailed study of the cyclotomic divisors of polynomials with $(0,1)$ coefficients. We will take advantage of the existing literature on vanishing sums of roots of unity, especially \cite{LL}, \cite{Mann}, \cite{PR}, \cite{schoen}, for some basic structural results. The particular question that we are interested in does not appear to have been studied and may well be very difficult, but we have been able to make sufficient progress to resolve the case of sets of cardinality at most 6. 

The assumption that $|A|,|B|\leq 6$ is only used at the last stage of the proof, to ensure that the cyclotomic divisors of $A(x)$ and $B(x)$ have the structure we need. If we instead assume such structure directly, no restrictions on the size of sets are needed. We thus have the following theorems.

\begin{theorem}\label{single prime}
Let $A,B$ be as in Theorem \ref{main thm}, but without the assumption that $|A|,|B|\leq 6$. Suppose that each of $A(x)$ and $B(x)$ 
satisfies the assumptions of Proposition \ref{compatible}.
Then 
$Fav(S_N)\lesssim N^{-p/\log\log N}$ for some $p>0$. 
\end{theorem}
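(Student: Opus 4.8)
**

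The plan is to reduce Theorem~\ref{single prime} to the machinery developed for Theorem~\ref{main thm}, using the hypotheses of Proposition~\ref{compatible} as a black box that supplies exactly the structural input about cyclotomic divisors which, in the $|A|,|B|\le 6$ case, was instead extracted from the classification of low-cardinality vanishing sums of roots of unity. Concretely: the whole argument for Theorem~\ref{main thm} is organized around a lower bound for the integral in (\ref{intro-e1}), obtained by running Salem's calculation (\ref{intro-e2}) not on all of $[0,1]$ but on a difference set $\Gamma-\Gamma$, where $\Gamma$ is the SLV set from Definition~\ref{slv structured}. Splitting $\phi_t=\phi_A\phi_B(t\,\cdot)$ and each of $\phi_A,\phi_B$ into good/bad factors as in Definition~\ref{A1234}, the good factor $\phi_A'$ contributes only via the log-SSV property (Proposition~\ref{liouville} plus the $\phi^{(1)}$ estimate), while the bad factor $\phi_A''=\phi_A^{(2)}$ is what forces the construction of $\Gamma$. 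So the only place where $|A|,|B|\le 6$ entered was in guaranteeing that the ``coarse lattice'' underlying $\Gamma_0$ — built from the cyclotomic divisors $\Phi_s$ with $(s,L)=1$ dividing $A(x)$ — is large enough, and that the various rescaled approximate cosets can be intersected (via pigeonholing) to produce a $\Gamma$ with $|\Gamma|\ge C_2 K L^{-m}$.

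The first step, then, is to restate what Proposition~\ref{compatible} provides and check it is precisely the hypothesis consumed by the SLV construction: namely that the bad part $A''(x)=\prod_{s\in S_A^{(2)}}\Phi_s(x)$ has its set $S_A^{(2)}$ of exponents organized around a single prime (hence the theorem's title) in such a way that the associated ``bad zero'' set of $\phi_A''$ in $\rr$ is, up to the claimed precision, a lattice with coarser sublattices removed, the coarsest surviving sublattice having spacing bounded below in terms of $L^{-m}$ with the right exponent. Granting that, the second step is to run the SLV-structured verification for $\phi_A$ and $\phi_B$ separately: choose $\Gamma_0$ to be a neighbourhood of the coarse lattice (so $\Gamma_0-\Gamma_0$ avoids the bad zeroes of $\phi_A$ and hence $|\prod_{k=1}^m\phi_A''(L^k\xi)|\ge L^{-C_1m}$ there), intersect the rescaled copies needed to simultaneously control all $m$ frequencies, and pigeonhole over translates to recover the lower bound (\ref{slv-e2}) on $|\Gamma|$. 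Then combine the SLV set for $\phi_A$ with that for $\phi_B(t\,\cdot)$ — uniformity in $t$ is automatic for product sets, as noted after Definition~\ref{SSV prop} — to get an SLV set for $\phi_t$, and feed it, together with the log-SSV bound for the good factors and the Nazarov–Peres–Volberg bound on $I_0=\int_0^{L^{-m}}|P_1|^2$, into the Salem-on-a-difference-set estimate for (\ref{intro-e1}).

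The third and final step is the bookkeeping that converts the lower bound for (\ref{intro-e1}) into the upper bound $Fav(S_N)\lesssim N^{-p/\log\log N}$: this is the standard iteration (as in \cite{NPV}, \cite{LZ}, \cite{BV3}) in which one chooses $m$ as an appropriate function of $n$ — here $m\sim n/\log\log n$ rather than $m\sim n$, which is exactly what produces the $\log\log N$ in the exponent — and sums the contributions over scales. Since none of this last part sees the cardinality restriction, it transfers verbatim.

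I expect the genuine obstacle to be entirely contained in the second step, and specifically in the pigeonholing that keeps $|\Gamma|$ large after intersecting the $m$ rescaled approximate cosets: the intersection of rescaled lattices need not have generic density, which is precisely why one works with cosets and pigeonholes, but making the counting honest requires the ``compatibility'' in Proposition~\ref{compatible} — roughly, that the prime $p$ controlling $S_A^{(2)}$ and the integer $L$ interact so that the relevant lattice spacings at the scales $L^k\xi$, $k=1,\dots,m$, are commensurable in a quantitatively controlled way. Once that compatibility is in hand the count goes through as in the $|A|,|B|\le 6$ case; verifying that Proposition~\ref{compatible}'s hypotheses really do supply it — rather than merely something close to it — is the point that needs care. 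Everything else (the log-SSV input, the $I_0$ estimate, Salem's calculation on $\Gamma-\Gamma$, the final scale iteration) is quoted intact from the rest of the paper.
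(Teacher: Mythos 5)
Your proposal is correct and follows essentially the paper's own route: Theorem \ref{single prime} is proved by running the argument for Theorem \ref{main thm} verbatim — the good/bad factorization, the log-SSV property of $\phi'$ (Proposition \ref{ssv factors}), the SSV removal and the Salem argument on $\Gamma-\Gamma$ (Proposition \ref{gamma want}), and the reductions of Section 2 — with Proposition \ref{compatible}, whose proof nowhere uses $|A|,|B|\leq 6$, invoked directly from the hypothesis to supply the SLV set; the cardinality bound entered only in Section \ref{small ab}, precisely to verify those hypotheses. Two small inaccuracies in your write-up are harmless since you quote the machinery intact: the relevant parameter is $m\sim \log N/\log\log N$ (not $n/\log\log n$), with the $\log\log N$ loss coming from the log-SSV property forcing $K=N^{\epsilon_0/\log\log N}$, and there is nothing further to ``re-verify'' in the pigeonholing, because Proposition \ref{compatible} as stated already yields \eqref{e-add2a} and \eqref{e-add3a} for sets of arbitrary cardinality.
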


In particular, the assumptions of Proposition \ref{compatible} hold for $A(x)$ if there is at most one $s_A\in\nn$ such that $(s_A,L)=1$ and $\Phi_{s_A}(x)|A(x)$, and similarly for $B(x)$. We will prove this in Section \ref{single divisor}.

\begin{theorem}\label{thm-noliouville}
Let $A,B$ be as in Theorem \ref{main thm} or Theorem \ref{single prime}. Assume that all roots of $A(x)$ and $B(x)$ on the unit circle are roots of unity (in the above notation, $A^{(3)}=B^{(3)}=1$). Then
$Fav(S_N)\lesssim N^{-p}$ for some $p>0$.
\end{theorem}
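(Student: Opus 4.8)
The plan is to run the proof of Theorem~\ref{main thm} with essentially no change, the point being that the one place where that argument is forced to give the weaker exponent $N^{-p/\log\log N}$ rather than $N^{-p}$ is its use of the \emph{log}-SSV property of the good factors $\phi_A',\phi_B'$, and that under the present hypothesis this input upgrades to the full SSV property of Definition~\ref{SSV prop}. Recall the structure of that proof: the arguments of \cite{NPV,BV1,BV3,LZ} together with the constructions of this paper reduce matters to a lower bound, uniform in $t=\tan\theta$, on the integral~(\ref{intro-e1}); one writes $\phi_t(\xi)=\phi_A(\xi)\phi_B(t\xi)$, factors $\phi_A,\phi_B$ as in Definition~\ref{A1234}, handles the ``bad'' factors $\phi_A'',\phi_B''$ through their SLV-structured property (Salem's computation, as in~(\ref{intro-e2}), carried out on a difference set $\Gamma-\Gamma$ with $\Gamma$ as in Definition~\ref{slv structured}), handles the ``good'' factors $\phi_A',\phi_B'$ through their SSV-type property, and glues the two together via the factorization. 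The construction of $\Gamma$ uses only the ``bad'' factor $A^{(2)}$, not $A^{(3)}$, so it is available here verbatim; only the quality of the SSV input changes.

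First I would record the trivial but decisive observation that under $A^{(3)}=B^{(3)}=1$ one has $\phi_A'=\phi_A^{(1)}\phi_A^{(4)}$ and $\phi_B'=\phi_B^{(1)}\phi_B^{(4)}$. The factors $\phi_A^{(4)},\phi_B^{(4)}$ have no zeros on the unit circle, hence are bounded below by a positive constant and contribute nothing to small values; the factors $\phi_A^{(1)},\phi_B^{(1)}$ have the full SSV property (this is, in essence, the mechanism already present in \cite{NPV,LZ,BV1}, and is in any case established in this paper). Therefore $\phi_A',\phi_B'$ have the SSV property, i.e.\ throughout the proof of Theorem~\ref{main thm} the SSV function for the good factors may be taken to be $\psi(m)=L^{-c_1 m}$ in place of $\psi(m)=L^{-c_1 m\log m}$; and, as already remarked, the covering constants $c_2,c_3$ of Definition~\ref{SSV prop} may be arranged with $c_3/c_2$ as large as we please, at the cost of a larger $c_1$.

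Next I would retrace the quantitative part of the proof of Theorem~\ref{main thm} with this stronger input. The only change of substance is in the admissible choice of the low-frequency cutoff $m$: in the log-SSV regime the loss factor contributed by $|P_2|^2$ on the difference set $\Gamma-\Gamma$ is governed by $\psi(m)^2=L^{-2c_1 m\log m}$, and balancing it against the gain in~(\ref{intro-e2}) and against $I_0=\int_0^{L^{-m}}|P_1|^2$ is exactly what forces $m$ into a range that degrades the final exponent by a $\log\log N$; with $\psi(m)=L^{-c_1 m}$ this balancing is linear and one may choose $m$ so that the resulting uniform lower bound for~(\ref{intro-e1}), inserted into the machinery of \cite{NPV,BV3} that converts it into a Favard estimate, gives a bound of the same quality as in \cite{NPV}, namely $Fav(S_N)\lesssim N^{-p}$. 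The constant $p$ will, as always, carry a long list of implicit dependencies (on $L$, on the $c_i$ and $C_i$, on the degrees of the relevant cyclotomic factors), and I will make no attempt to track it.

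I expect the main obstacle to be purely organizational, living in the interaction between the SLV construction and the (now stronger) SSV deletion once $m$ is allowed to be comparatively small. One must check that the pigeonholing producing $\Gamma$ (with $|\Gamma|\geq C_2 K L^{-m}$ for a suitably large $K$) remains compatible with deleting from $\Gamma-\Gamma$ the SSV set of the good factors, which by the covering bound has measure at most $L^{-(c_3-c_2)m}$; that is, one needs $c_3-c_2$ large enough relative to the exponents produced by the construction of $\Gamma$ so that the deleted set is negligible and the difference set of what remains still supports a function $h\geq0$ with $\widehat h\geq0$, with essential support in $[L^{-m},1]$, and with $\widehat h(0)$ of the right size to run~(\ref{intro-e2}). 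Since $c_3/c_2$ is at our disposal, enlarging $c_1$ costs only a fixed power of $N$, and the entire SLV apparatus is already in place in the proof of Theorem~\ref{main thm}, I do not anticipate a genuine difficulty; the mathematical content of the theorem is simply that removing $A^{(3)}$ and $B^{(3)}$ restores the full SSV property and with it the clean power bound.
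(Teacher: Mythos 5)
Your proposal is correct and follows the paper's own route: the paper proves Theorem \ref{thm-noliouville} by exactly this observation, namely that when $A^{(3)}=B^{(3)}=1$ Proposition \ref{ssv factors} gives the full SSV property for $\phi_A',\phi_B'$ (with $c_3/c_2$ as large as desired), so one runs the identical argument with $\psi(m)=L^{-c_1m}$, $m=c_0\log N$, $K=N^{\epsilon_0}$, the SLV construction of $\Gamma$ being untouched since it involves only $A^{(2)},B^{(2)}$, and concludes via Proposition \ref{strong result}. No gap to report.
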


Theorems \ref{single prime} or \ref{thm-noliouville} are not sufficient to cover all rational product sets, as there are many examples of sets $A$ and $B$ for which the assumptions of Proposition \ref{compatible} fail. It may be possible to modify the construction of $\Gamma$ to accommodate such cases, based on a deeper analysis of cyclotomic divisors of polynomials with $(0,1)$ coefficients. We expect this question to be difficult and to require methods that go well beyond those of the present paper.

Another interesting open question is whether $\phi^{(3)}$ must in fact have the SSV property, not just the log-SSV property; if so, then the stronger power estimate in Theorem \ref{thm-noliouville} would hold without the assumption that $A^{(3)}=B^{(3)}=1$. This would likely involve the aforementioned diophantine approximation issues for logarithms of algebraic numbers. The result we invoke in this paper is very general, and it is possible that stronger estimates might hold for the specific types of algebraic numbers arising in this problem.

\bigskip
\noindent\textbf{Acknowledgement.} The first author is an NSF postdoctoral fellow. The first and second authors are supported in part by NSERC Discovery Grant 22R80520. The third author is supported in part by the NSF grant DMS-0758552.

We are very grateful to Michel Waldschmidt and Anne de Roton for pointing us to Theorem 9.1 in \cite{wald}.


\section{Preliminary reductions}

In this section, we reduce the proof of Theorem \ref{main thm} to proving lower bounds on integrals of the form \eqref{intro-e1}. This will be accomplished in Proposition \ref{total est goal}.
We will follow the method of \cite{NPV}, with only minor modifications. 

\subsection{The counting function and the exceptional direction set $E$}\label{count buffons beast}
We first define the ``counting function'' $f_{n,\theta}:=\sum\chi_{proj_\theta(Q)}$, where $Q=z_j+[0,L^{-n}]^2$ for some $z_j\in A_n\times B_n$ and the sum ranges over all $L^n$ such possible $Q$. That is, $f_{n,\theta}$ counts how many squares lie ``above'' or ``below'' $x$ when the ray forming the angle $\theta$ with the real axis is regarded as the positive ``horizontal'' direction.

Let $K$ be a large number depending on $N$, to be fixed shortly.  Large values of $f_{n,\theta}$ result when the squares form very tall ``stacks'' above $x$ on the $\theta$-axis, and stacks of $K$ or more squares generically favor over the long term a generically $K$-to-one (or greater) projection mapping from $S_N$ to its shadow $\text{proj}_\theta(S_N)$, so that we should expect to see $|\text{proj}_\theta (S_N)|\lesssim 1/K$ for $N$ not that much longer than when we first saw stacks $K$ tall.

As our sets $S_N$ are self-similar, it is appropriate to also consider a maximal version of $f$ so that we may freely learn about many subsets and subproducts from estimates of this single quantity:
$$f_{N,\theta}^*:=\sup_{n\leq N}f_{n,\theta}$$
Finally, we define the set $\tilde E_{N,K}$ of ``bad directions'' $\theta$ where the ``typical'' stacking might not occur:
$$A^*=A_{N,K}^*=\{x:\ f_{N,t}^*(x)\geq K\},\ \ \tilde E_{N,K}=\tilde E=\{t:\ |A_{N,K}^*|\leq K^{-3}\}.$$
We will emphasize and deemphasize the parameters $N,K$ as needed. Intuitively, the directions $\theta\in \tilde E_{N,K}$ are those for which $|proj_\theta(S)|$ decays slowly; for flavor, it can be shown that $\bigcup_{K\in\nn}\bigcap_{N\in\nn}\tilde E_{N,K}=\{\theta:|\text{proj}_\theta(S_N)|\not\to 0\text{ as }N\to\infty\}$.

In \cite{NPV}, two combinatorial lemmas reduced the upper bound in Buffon's needle problem to a question about the size of the set $\tilde E$. Both of these, as well as (\ref{beta}) below, are given in a simplified sub-optimal form. With a slight modification of the definiton of $\tilde E$, it is possible to improve some exponents a little, the end result being an improvement in the value of constants we are not attempting to track anyway.

The first lemma is perhaps not difficult to believe once one has understood the heuristic discussion above and the definition of $\tilde E$.

\begin{lemma}\label{combin1} (\cite{BThesis}, Section 5.2.2; also appearing in \cite{NPV}) For $t\in \tilde E_{N,K}$,
$$\max_{n\leq N} \|f_{N,\theta}\|_2^2\leq cK,$$
\end{lemma}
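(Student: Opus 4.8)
The plan is to split $\|f_{n,\theta}\|_2^2=\int f_{n,\theta}^2$ along the partition of the line into $A^*_{N,K}$ and its complement and estimate the two pieces by entirely different means; the definition of $\tilde E_{N,K}$ is tailored precisely so that both estimates succeed. It suffices to prove $\|f_{n,\theta}\|_2^2\le cK$ for each fixed $n\le N$ and then take the maximum.

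\emph{The piece off $A^*_{N,K}$.} On the complement of $A^*_{N,K}$ one has $f_{N,\theta}^*<K$ by definition, and since $f_{n,\theta}\le f_{N,\theta}^*$ pointwise for every $n\le N$, also $f_{n,\theta}<K$ there. Therefore
\[
\int_{\rr\setminus A^*_{N,K}}f_{n,\theta}^2\le K\int f_{n,\theta}=K\sum_{Q}|proj_\theta(Q)|\le K\cdot L^n\cdot\sqrt2\,L^{-n}=\sqrt2\,K,
\]
using that $f_{n,\theta}$ is the sum of $L^n$ indicator functions of intervals of length at most $\sqrt2\,L^{-n}$, so that $\|f_{n,\theta}\|_1\le\sqrt2$. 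This step is soft and uses nothing about the direction.

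\emph{The piece on $A^*_{N,K}$.} Here we use the hypothesis $t\in\tilde E_{N,K}$, that is, $|A^*_{N,K}|\le K^{-3}$: the base is tiny. The difficulty is that $f_{n,\theta}$ may be as large as $L^n$ on $A^*_{N,K}$, so one cannot merely factor out $\|f_{n,\theta}\|_\infty$. The way around this is to trade stack height against base measure using self-similarity: the restriction of $f_{n,\theta}$ to the $\theta$-shadow of a single level-$m$ square ($m\le n$) is, after rescaling the variable by $L^m$, a translate of $f_{n-m,\theta}$, and a level-$m$ square contains only $L^{n-m}$ sub-squares of level $n$. Localizing the set where $f_{n,\theta}$ is large to the shadows of a controlled number of level-$m$ squares (for a suitable choice of $m$), one bounds $f_{n,\theta}$ there by roughly $L^{n-m}$ and multiplies by $|A^*_{N,K}|\le K^{-3}$; the exponent $3$ is exactly what is needed for this product to stay $\lesssim K$. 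Adding the two contributions yields $\|f_{n,\theta}\|_2^2\le cK$.

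\emph{Main obstacle.} The estimate off $A^*_{N,K}$ is immediate; the crux is the estimate on $A^*_{N,K}$, where one must make quantitative the heuristic that a stack of height $\ge K$ forces a generically $K$-to-one projection and hence a shadow of measure $\lesssim 1/K$. This scale-sensitive bookkeeping — confining the high-multiplicity part of $f_{n,\theta}$ to a set whose measure is small enough, in a way that survives the squaring — is precisely the content of the combinatorial argument of \cite{BThesis}, \cite{NPV}, and it is where the particular power $K^{-3}$ in the definition of $\tilde E_{N,K}$ is consumed.
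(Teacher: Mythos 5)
Your first estimate (off $A^*_{N,K}$) is correct and complete: $f_{n,\theta}\le f^*_{N,\theta}<K$ there, $\|f_{n,\theta}\|_1\le\sqrt2$, hence that piece is $\lesssim K$; and the reduction to fixed $n\le N$ via $\{f^*_{n}\ge K\}\subset A^*_{N,K}$ is fine. But the second estimate, which is the entire content of the lemma, is not proved. You assert that one can localize the large values of $f_{n,\theta}$ to the shadows of ``a controlled number'' of level-$m$ squares and bound $f_{n,\theta}$ there by roughly $L^{n-m}$, but nothing in the proposal produces such a localization, specifies $m$, or controls the number of squares. Note also that the restriction of $f_{n,\theta}$ to the shadow of one level-$m$ square is \emph{not} a rescaled translate of $f_{n-m,\theta}$: it also receives contributions from level-$n$ squares lying in other level-$m$ squares whose shadows overlap, and this overlap across cells is precisely the difficulty. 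Quantitatively, with $|A^*_{N,K}|\le K^{-3}$ you would need a pointwise cap of order $K^2$ on the relevant portion of $A^*_{N,K}$ for the product to stay $\lesssim K$, and no such cap holds pointwise ($f_{n,\theta}$ can be as large as $L^n$ on part of $A^*_{N,K}$). What is actually required is a distributional/stopping-time estimate showing that the measures of the higher superlevel sets of $f^*$ decay fast enough to make the layer-cake sum converge; the naive subadditive bound one gets by splitting at a single intermediate scale (roughly $|\{f_n\ge K^j\}|\lesssim jK^{-3}$) is \emph{not} enough, since already the layer at height $K^2$ would then contribute $\sim K^3$.

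Your closing paragraph in effect concedes this: the ``scale-sensitive bookkeeping'' you defer to is exactly the combinatorial argument of \cite{NPV} and \cite{BThesis}, Section 5.2.2, which is also all this paper offers for the lemma (it cites those references rather than reproving it). So as a blind proof the proposal establishes only the easy half; to complete it you must actually carry out the multiscale stopping-time argument (first scale at which the stack height reaches $K$, self-similar renormalization inside the stopping squares, and a summable geometric bound on the measure of the sets where $f^*$ exceeds successive powers of $K$), which is where the exponent $3$ in $K^{-3}$ is genuinely used.
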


The second lemma says that there is a rather concrete connection between $|A_{N,K}^*|$ and ``future'' values of $|proj_{\theta}(S_{N'})|$, $N'>>N$. 

\begin{lemma}\label{combin2} (\cite{BThesis}, Section 5.2.1; also appearing in \cite{NPV})
For $\theta\notin \tilde E_{N,K}$ and for $N,K\gg 1$, $|proj_\theta(S_{NK^3})|\lesssim\frac1{K}$.
\end{lemma}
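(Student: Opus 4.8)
\section*{Proof proposal for Lemma \ref{combin2}}

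The plan is to exploit the self-similarity of $S_N$ together with the hypothesis $\theta\notin\tilde E_{N,K}$, which says $|A_{N,K}^*|>K^{-3}$, i.e. the set of points lying under a stack of height $\geq K$ of the $N$-th generation squares has measure exceeding $K^{-3}$. The key point is that this stacking persists and replicates at every sufficiently small scale, so that after $K^3$ further levels of refinement the projection has been forced to collapse roughly $K$-to-one almost everywhere on its image.

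First I would set up the iteration. Write $S_{NK^3}$ as a union of $L^{N(K^3-1)}$ rescaled copies of $S_N$, each of the form $z + L^{-N(j)}S_N$ (after adjusting by the neighbourhood parameter, which only changes constants). Fix $\theta\notin\tilde E_{N,K}$. For a copy $Q' = z + L^{-Nj}\cdot(\text{copy of }S_N)$, the set of $x\in\text{proj}_\theta(Q')$ which are covered with multiplicity $\geq K$ by the sub-squares of $Q'$ is, by scaling, a translate of $L^{-Nj}A_{N,K}^*$ intersected with the shadow, hence has relative measure $>K^{-3}$ inside $\text{proj}_\theta(Q')$ (this is precisely the scale-invariance of the definition of $A^*$, using $f^*_{N,\theta}$ being a supremum over $n\leq N$). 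The heuristic is that every time we descend $N$ levels, a $K^{-3}$-fraction of the current shadow gets "multiplicity-loaded'' by a factor $K$; iterating $K^3$ times, the expected number of disjoint preimages of a typical point grows like $(1+cK^{-3})^{K^3}\cdot$(something), and one pushes this to show the genuine multiplicity of the projection map is $\gtrsim K$ on a set of nearly full measure in the shadow.

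To make this rigorous I would argue via a second-moment / energy estimate rather than a direct counting of multiplicities. Consider the counting function $g := f_{NK^3,\theta}$ on $\text{proj}_\theta(S_{NK^3})$; it has $\int g = |S_{NK^3}|_{\text{proj}}\cdot(\text{average multiplicity})$ with $\int g \asymp 1$ by self-similarity and finiteness of $H^1$. The hypothesis $\theta\notin\tilde E_{N,K}$ forces a lower bound on $\int g^2$: roughly, each of the $K^3$ nested scales contributes, through the $K^{-3}$-large set carrying a multiplicity jump of $K$, an additive amount $\gtrsim K^{-3}\cdot K^2 = K^{-1}$ to a suitably normalized variance, and these contributions are essentially independent across scales (they live on dyadically separated frequency bands of the projection), so $\int g^2 \gtrsim K^3 \cdot K^{-1}\cdot\text{const} = \text{const}\cdot K^2$. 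Then Cauchy--Schwarz gives
\[
|\text{proj}_\theta(S_{NK^3})| = |\{g\geq 1\}| \;\geq\; \frac{(\int g)^2}{\int g^2}
\]
in the wrong direction; instead I would use $\int g^2 \gtrsim K^2$ together with $\int g \asymp 1$ and the fact that $g$ is integer-valued, via the inequality $|\{g\geq 1\}| = \int \mathbf{1}_{g\geq 1} \le \int g/1$ refined by: $\int g \ge K\,|\{g\ge K\}|$ and one shows $|\{1\le g < K\}|$ is negligible because the variance is carried by the high values — concretely, $\int g^2 \le K\int g + K^2|\{g\ge K\}|\cdot(\text{tail})$, which combined with $\int g^2\gtrsim K^2$ and $\int g\asymp 1$ yields $|\{g\ge K\}|\gtrsim$ const, and then $1\asymp\int g\ge K|\{g\ge K\}|\gtrsim K\cdot$const is a contradiction unless in fact the projection measure itself is $\lesssim 1/K$. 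Cleanly: $1 \asymp \int g = \sum_{x}\text{mult} \ge \int_{\{g\ge 1\}} 1 \cdot (\text{but } g \text{ is concentrated near } \ge K)$, giving $|\text{proj}_\theta(S_{NK^3})| \lesssim \int g / K \asymp 1/K$.

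The main obstacle is the independence-across-scales claim used to lower-bound $\int g^2$ (equivalently, to propagate the single-scale stacking hypothesis through $K^3$ iterations without loss): one must check that loading multiplicity at scale $L^{-Nj}$ genuinely adds to the multiplicity inherited from coarser scales rather than overlapping with it, which requires that the "tall stack'' directions at different scales project points to genuinely distinct sheets of the shadow. This is where the precise sub-optimal exponent $K^3$ (rather than $K$) comes from — one has room to spare, so I would prove a clean one-step estimate ("if $\theta\notin\tilde E_{N,K}$ then passing from $S_{N'}$ to $S_{N'+N}$ multiplies a variance-type quantity by a definite factor'') and then simply iterate it $K^3$ times, absorbing all the overlap losses into the generous exponent. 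This is exactly the structure of the argument in \cite{NPV} and \cite{BThesis}, Section 5.2.1, which I would follow.
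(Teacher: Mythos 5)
First, a point of reference: the paper does not actually prove Lemma \ref{combin2} --- it is imported from \cite{NPV} and \cite{BThesis} --- so your proposal has to be judged against the argument given there, which is of a quite different nature from what you describe.

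The fatal gap is the passage from moment estimates to the measure bound. Even if one grants both of your claims, $\int g\asymp 1$ and $\int g^2\gtrsim K^2$ for $g=f_{NK^3,\theta}$, these do \emph{not} imply $|\{g\ge 1\}|\lesssim 1/K$: take $g=K^2$ on a set of measure $K^{-2}$ and $g=1$ on a set of measure $1/2$; then $\int g\asymp1$ and $\int g^2\asymp K^2$, yet $|\{g\ge1\}|\ge 1/2$. A second moment cannot distinguish ``the multiplicity is $\ge K$ on most of the shadow'' (which is what you need) from ``the multiplicity is enormous on a tiny set and equal to $1$ on a set of measure $\asymp 1$.'' Your attempted repairs are circular: ``$|\{1\le g<K\}|$ is negligible because the variance is carried by the high values'' is precisely the statement to be proved, the displayed chain $\int g^2\le K\int g+K^2|\{g\ge K\}|\cdot(\text{tail})$ is not a valid inequality in the direction you need, and the closing ``Cleanly: \dots $g$ is concentrated near $\ge K$'' simply asserts the conclusion. (Cauchy--Schwarz in this setting inherently produces \emph{lower} bounds on $|\{g>0\}|$ via $(\int g)^2\le|\{g>0\}|\int g^2$, never upper bounds.) Separately, the lower bound $\int g^2\gtrsim K^2$ is itself unestablished --- the ``independence across scales'' you flag as the main obstacle is the whole difficulty, since $f^*_{N,\theta}\ge K$ at some scale $n\le N$ does not persist as a pointwise lower bound on $f$ at finer generations.

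The argument in \cite{NPV} and \cite[Section 5.2.1]{BThesis} is a first-moment covering iteration, not a variance computation. The elementary mechanism is that if $K$ squares of side $L^{-n}$ have $\theta$-projections sharing a common point, the union of those projections has length at most about $2L^{-n}$ while the sum of their lengths is about $KL^{-n}$: mass is compressed into length by a factor $\sim K$ on the stacked set. Since $\theta\notin\tilde E_{N,K}$ provides such stacks over a set of measure $>K^{-3}$, and self-similarity reproduces this configuration inside every cell of every generation $jN$, one obtains a definite additive decrement of order $K^{-3}$ in $|proj_\theta(S_{jN})|$ (equivalently, in a truncated mass like $\int\min(f_{jN,\theta},K)$) at each of the $K^3$ blocks, valid as long as the projection length still exceeds $C/K$; iterating and using monotonicity of the projection length forces $|proj_\theta(S_{NK^3})|\lesssim 1/K$. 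Your instinct at the very end --- prove a clean one-step estimate and iterate it $K^3$ times --- is the right shape, but the monotone quantity must be the projection length (or a truncated first moment), with the gain coming from overlap of projected intervals; as written, the proposal does not constitute a proof.
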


Of course we would like $K$ as large as we can manage depending on $N$. This is what works for us:
\begin{itemize}
\item If the ``good'' factor $\phi'$ has the SSV property, let $K:=N^{\epsilon_0}$ for some $\epsilon_0>0$.
\item If $\phi'$ only has the log-SSV property, $K:=N^{\epsilon_0/\log\log N}$ for some $\epsilon_0>0$.
\end{itemize}
The goal is to prove that
\begin{equation}\label{beta}
|\tilde E|\leq C{K^{-1/2}}.
\end{equation}
Then from Lemma \ref{combin2}, one sees that
\begin{equation}\label{unstuffed squirrel}
\begin{split}
\pi\cdot|Fav(S_{NK^3})|&=\int_{\theta\in E}|proj_{\theta}(S_{NK^3})|d\theta+\int_{\theta\notin E}|proj_{\theta}(S_{NK^3})|d\theta
\\
&\lesssim |\tilde E|+(\pi-|\tilde E|)\frac1{K}\lesssim {K^{-1/2}}.
\end{split}
\end{equation}

In the SSV case, the power bound follows immediately.

\begin{proposition}\label{strong result} (Strong result)
Suppose that for $K\gtrsim N^{\epsilon_0}$, one has $|\tilde E_{N,K}|\lesssim {K^{-1/2}}$. Then $|Fav(S_N)|\lesssim N^{-p}$ for some $p>0$.
\end{proposition}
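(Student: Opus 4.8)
The plan is to deduce the power bound from the reduction already carried out in this section, by a change of variables in the parameter. Note first that the hypothesis is exactly estimate (\ref{beta}) for the choice $K\gtrsim N^{\epsilon_0}$; together with Lemma \ref{combin2} and the chain of inequalities (\ref{unstuffed squirrel}), it gives, for every integer $N\gg 1$ and every integer $K\geq c_0N^{\epsilon_0}$ (with $c_0$ the implied constant in the hypothesis),
\[
Fav(S_{NK^3})\ \lesssim\ K^{-1/2}.
\]
I would then specialize to $K=K(N):=\lceil c_0N^{\epsilon_0}\rceil$ and set $M_N:=N\,K(N)^3$. Since $c_0N^{\epsilon_0}\leq K(N)\leq 2c_0N^{\epsilon_0}$ for $N$ large, $M_N$ is comparable to $N^{1+3\epsilon_0}$, so $N$ is comparable to $M_N^{1/(1+3\epsilon_0)}$, and hence
\begin{equation*}
Fav(S_{M_N})\ \lesssim\ K(N)^{-1/2}\ \lesssim\ N^{-\epsilon_0/2}\ \lesssim\ M_N^{-p},\qquad p:=\frac{\epsilon_0}{2(1+3\epsilon_0)}.
\end{equation*}

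Next I would pass from the sparse sequence $\{M_N\}_{N\geq 1}$ to all large $M$, which takes two elementary observations. First, $Fav(S_M)$ is non-increasing in $M$ up to an absolute constant: since (as noted after the statement of Theorem \ref{main thm}) the two definitions of $S_M$ agree up to constants, it is enough to observe that the $L^{-M}$-neighbourhood of $S_\infty$ shrinks as $M$ grows while orthogonal projection respects inclusion, so $Fav(S_M)\lesssim Fav(S_{M'})$ whenever $M\geq M'$. Second, the $M_N$ have bounded consecutive ratios: from $(N+1)^{\epsilon_0}-N^{\epsilon_0}\to 0$ one gets $K(N+1)\leq K(N)+1$ for $N$ large, whence $M_{N+1}/M_N\leq\tfrac{N+1}{N}\bigl(1+K(N)^{-1}\bigr)^3\to 1$, so in particular $M_{N+1}\leq 2M_N$ for all large $N$.

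Given a large $M$, I would choose $N$ with $M_N\leq M<M_{N+1}\leq 2M_N$; then $M_N\geq M/2$, and combining the two observations with the displayed estimate yields
\[
Fav(S_M)\ \leq\ C\,Fav(S_{M_N})\ \lesssim\ M_N^{-p}\ \leq\ (M/2)^{-p}\ \lesssim\ M^{-p}.
\]
For the finitely many remaining (small) $M$, the trivial bound $Fav(S_M)\lesssim 1\lesssim M^{-p}$ holds after enlarging the implied constant, since every $S_M$ lies inside one fixed bounded region. This would complete the proof, with $p=\epsilon_0/(2(1+3\epsilon_0))$.

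I do not expect any genuine obstacle in this argument: all of the analytic content sits in Lemmas \ref{combin1}--\ref{combin2} and, crucially, in the later proof of estimate (\ref{beta}). The only point requiring any care is the bookkeeping that matches the scale $NK^3$, at which (\ref{unstuffed squirrel}) delivers its bound, to the scale $M$ at which the conclusion is stated --- and the ``bounded ratios'' observation is exactly what makes this matching lossless up to constants.
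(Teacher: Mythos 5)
Your proposal is correct and follows essentially the same route as the paper: apply (\ref{unstuffed squirrel}) with $K\approx N^{\epsilon_0}$ to get $Fav(S_{NK^3})\lesssim K^{-1/2}\approx (NK^3)^{-\epsilon_0/(2(1+3\epsilon_0))}$, then pass to all scales by monotonicity of $Fav(S_M)$ in $M$. The paper leaves these bookkeeping steps implicit ("the power bound follows immediately", with monotonicity invoked explicitly only in the log-SSV case), and your write-up simply fills them in correctly.
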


In the log-SSV case, we can apply \eqref{unstuffed squirrel} to get
$$\text{Fav}(S_{N\cdot N^{3\epsilon_0/\log\log N}})\lesssim N^{-\epsilon_0/2\log\log N}$$
The result now clearly follows by the monotonicity of $\text{Fav}(S_N)$ in $N$.

\begin{proposition}\label{weak result} (Weak result)
Suppose that for $K\gtrsim N^{\epsilon_0/\log\log N}$, one has $|\tilde E_{N,K}|\lesssim{K^{-1/2}}$. Then $|Fav(S_N)|\lesssim N^{-p/\log\log N}$ for some $p>0$.
\end{proposition}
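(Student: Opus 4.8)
The plan is to combine the hypothesis with the chain of inequalities in \eqref{unstuffed squirrel}, which already reduces everything to bookkeeping, and then to undo the substitution $N\mapsto NK^3$ using monotonicity of $Fav(S_M)$ in $M$.

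First I would fix the admissible choice $K=K(N):=N^{\epsilon_0/\log\log N}$, so that the hypothesis $|\tilde E_{N,K}|\lesssim K^{-1/2}$ applies, and set $M=M(N):=N\,K(N)^3=N^{1+3\epsilon_0/\log\log N}$. Since $\epsilon_0/\log\log N<1$ eventually, the map $N\mapsto M(N)$ is strictly increasing for $N$ large, with $M(N)=N^{1+o(1)}$. Plugging $K(N)$ into \eqref{unstuffed squirrel} gives $Fav(S_{M(N)})\lesssim K(N)^{-1/2}=N^{-\epsilon_0/(2\log\log N)}$.

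Next I would interpolate to all large integers: given $\tilde N$ large, take $N$ to be the largest integer with $M(N)\le\tilde N$, so that $M(N)\le\tilde N<M(N+1)$. Because $Fav(S_M)$ is nonincreasing in $M$ (the sets $S_M$ are nested up to the constants implicit in $\lesssim$), we get $Fav(S_{\tilde N})\le Fav(S_{M(N)})\lesssim N^{-\epsilon_0/(2\log\log N)}$. It then remains to check that the right-hand side is at most $\tilde N^{-p/\log\log\tilde N}$ for a suitable $p>0$. From $M(N)\le\tilde N<M(N+1)$ and $M(N)=N^{1+o(1)}$ one reads off $\log\tilde N=(1+o(1))\log N$, hence $\log\log\tilde N=\log\log N+o(1)=(1+o(1))\log\log N$, and therefore $\frac{\log N}{\log\log N}=(1+o(1))\frac{\log\tilde N}{\log\log\tilde N}$. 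Consequently, for any fixed $p<\epsilon_0/2$ we have $N^{-\epsilon_0/(2\log\log N)}\le\tilde N^{-p/\log\log\tilde N}$ once $\tilde N$ is large enough, and absorbing the finitely many small values of $\tilde N$ into the implied constant completes the argument. (Proposition \ref{strong result} is proved the same way but without the $\log\log$ bookkeeping: with $K=N^{\epsilon_0}$ the substitution $N\mapsto NK^3=N^{1+3\epsilon_0}$ is a fixed power of $N$, so \eqref{unstuffed squirrel} together with monotonicity immediately yields $Fav(S_N)\lesssim N^{-p}$.)

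There is no genuine difficulty here; the whole content is in \eqref{unstuffed squirrel}, which in turn rests on Lemma \ref{combin2} and the target estimate \eqref{beta}. The only point demanding any care is the asymptotic $\log\log\tilde N=(1+o(1))\log\log N$ under $\tilde N=N^{1+3\epsilon_0/\log\log N}$; this is exactly what forces the conclusion to carry $\log\log N$ in the exponent rather than being a clean power, and forces $p$ to be an arbitrary constant strictly below $\epsilon_0/2$ rather than $\epsilon_0/2$ itself.
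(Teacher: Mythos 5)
Your proposal is correct and follows essentially the same route as the paper: substitute $K=N^{\epsilon_0/\log\log N}$ into \eqref{unstuffed squirrel} to bound $Fav(S_{NK^3})$, then use monotonicity of $Fav(S_M)$ in $M$ together with the observation that $\log\log(NK^3)=(1+o(1))\log\log N$ to convert the estimate into one for arbitrary large indices. The paper compresses this into the single remark that "the result now clearly follows by the monotonicity of $Fav(S_N)$ in $N$"; your write-up simply makes that interpolation step explicit.
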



\subsection{Reduction to trigonometric polynomial estimates}\label{reduction steps}

It will be helpful for us to change the variable, $t=\tan(\theta)$. This does no harm as we use symmetry to consider only the case $\theta\in [0,\pi/4]$. After rescaling, for each $z\in A_n\times B_n$, we may write $proj_\theta(z)=a+tb$ for some $a\in A_n,b\in B_n$. It is understood that any object depending on $\theta$ has an analogous version depending on $t$, and we freely change notations now.

We write
$$f_{n,t}=\nu_n*L^n\chi_{[0,L^{-n}]},\text{ where}$$
$$\nu_n=*_{k=1}^n\tilde{\nu}_k, \,\,\,\, \tilde{\nu}_k=\frac1{L}\sum_{(a,b)\in A\times B} \delta_{L^{-k}a+tL^{-k}b}.$$
Of course, $\nu$ and $\tilde{\nu}$ depend on $t$, but we will not need to display that dependence explicitly.

The Fourier transform, then, is a decay term times a self-similar product: $$\hat{f}_{n,t}(\xi)=L^n\hat{\chi}_{[0,L^{-n}]}(\xi)\cdot\prod_{k=1}^n\phi_t(L^{-k}\xi),\,\,\,\phi_t(\xi):=\frac1{L}\sum_{(a,b)\in A\times B}e^{2\pi i(a+tb)\xi}$$

We would like to ignore the Fourier decay convolution factor of $f_{n,t}$ (i.e., $L^N\chi_{[0,L^{-N}]}$), since the mathematics of interest lies in the frequency part, $\hat{\nu}_n$. Some pigeonholing accomplishes this. The following integration over $\tilde E$ and the passing from $\tilde E$ to ${E}\subseteq \tilde E$ is an additional pigeonholing done to ensure that $m$ and $n$ do not depend on $t$.

Because of Theorem $\ref{combin1}$, we have for all $ t\in \tilde E$,

$$K\geq  ||f_{N,t}||^2_2\approx ||\widehat{f_{N,t}}||^2_2\geq C\int_1^{L^{N/2}}{|\widehat{\nu_N}(x)|^2dx},$$
leading to
\begin{equation}
\label{intnuhat}
K\geq \frac{C}{|\tilde E|}\int_{\tilde E}\int_1^{L^{N/2}}{|\widehat{\nu_N}(x)|^2dx}dt
\end{equation}

\begin{itemize}
\item If the SSV property holds for $\phi'$, let $m= c_0\log N$ (rounded to an integer), for $c_0>0$ small enough.
\item If only the log-SSV property holds for $\phi'$, let $m=c_0\frac{\log N}{\log\log N}$, for $c_0>0$ small enough.
\end{itemize}

In both cases, we will choose the constants so that $K=L^{c^*m}$ with $c^*\approx c_0\approx \sqrt{\epsilon_0}$. In particular, we may choose $c^*$ as small as we wish, provided that $\epsilon_0$ is sufficiently small.

Split $[1,L^{N/2}]$ into $N/2$ pieces $[L^k,L^{k+1}]$ and take a sample integral of $|\widehat{\nu_N}|^2$ on a small block $[L^{n-m},L^n]\times \tilde E$, with $n\in [N/4,N/2]$ chosen so that
$$
\frac{1}{|\tilde E|}\int_{\tilde E}{\int_{L^{n-m}}^{L^n}{|\widehat{\nu_N}(x)|^2dx\,dt}}\leq CKm/N\,.
$$
This choice is possible by $\eqref{intnuhat}$. Define 
 $$
 {E}:=\lbrace t\in \tilde E: \int_{L^{n-m}}^{L^n}{|\widehat{\nu_N}(x)|^2dx}\leq 2CKm/N\rbrace\,.
 $$
Suppose that (\ref{beta}) fails; then
\begin{equation}\label{beta-fail2}
|{E}|\geq\frac{1}{2K^{1/2}}.
\end{equation}
(The passing from $\tilde E$ to ${E}$ is a bit of a technicality; they are ``typical'' directions of $\tilde E$.) 

Note that $\widehat{\nu_N}(x)=\prod_{k=1}^N{\phi (L^{-k}x)}\approx\prod_{k=1}^n{\phi (L^{-k}x)}$ for $x\in [L^{n-m},L^n]$. This is because for all such $x$, the terms $\phi(L^{-k}x)\to 1$ rapidly as $k\to\infty$.
Hence for all $t\in {E}$,
\begin{equation}\label{stuffed-squirrel}
\int_{L^{n-m}}^{L^n}{\prod_{k=1}^n{|\phi_t(L^{-k}x)|^2}dx}\leq \frac{CKm}{N}
\end{equation}
Recall that  the constant $\epsilon_0>0$ was used to define $K$. Our goal is to contradict this if (\ref{beta-fail2}) holds and if $\epsilon_0$ is sufficiently small, by showing that 
we can find a $t\in{E}$ and a constant $\alpha$ on which $\epsilon_0$ does not depend such that 

\begin{equation}\label{totalest}
\int_{L^{n-m}}^{L^n}{\prod_{k=1}^n{|\phi_t(L^{-k}x)|^2}dx}\geq cKN^{-\alpha\epsilon_0}.
\end{equation}

Indeed, (\ref{stuffed-squirrel}) and (\ref{totalest}) can both hold only for finitely many $N$:
\begin{itemize}
\item If $m=c_0\log N$, $\log N\gtrsim N^{1-\alpha\epsilon_0}$, i.e., $N\leq N^*$ if $\epsilon_0$ is small enough.
\item If $m=c_0\frac{\log N}{\log\log N}$, $\frac{\log N}{\log\log N}\gtrsim N^{1-\alpha\epsilon_0}$, i.e., $N\leq N^*$ if $\epsilon_0$ is small enough.
\end{itemize}

Proposition \ref{total est goal} below summarizes our conclusions in a form convenient for future use.  We rescale (\ref{totalest}) so that the interval of integration becomes $[L^{-m},1]$. We will also need to split up the rescaled integrand in (\ref{totalest}) into low-frequency and high-frequency parts:\begin{equation}\label{p1def}
P_1(x):=P_{1,t}(x)=\prod_{k=m}^{n-1} \phi_t(L^kx);\,\,\, P_2(x):=P_{2,t}(x)=\prod_{k=0}^{m-1} \phi_t(L^kx)
\end{equation}

\begin{proposition}\label{total est goal}
To prove Theorem \ref{main thm}, we need only prove the following:
Let $\epsilon_0>0$ be sufficiently small, and assume that (\ref{beta-fail2}) holds. Then there is a
$t\in{E}$ such that 
\begin{equation}\label{totalest prime}
\int_{L^{-m}}^1 |P_{1,t}(x)P_{2,t}(x)|^2 dx\geq cKL^{-n}N^{-\alpha\epsilon_0}.
\end{equation}
\end{proposition}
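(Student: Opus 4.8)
The plan is to reduce Proposition \ref{total est goal} to a lower bound on the integral \eqref{intro-e1}, by unpacking the definitions of $P_1$, $P_2$ and re-tracing the reductions in Section \ref{reduction steps}. Concretely, I would observe that \eqref{totalest prime} is just the integral \eqref{totalest} after the affine change of variables $x\mapsto L^{m-n}x$ that sends $[L^{n-m},L^n]$ to $[L^{-m},1]$ and rescales the product $\prod_{k=1}^n|\phi_t(L^{-k}x)|^2$ into $\prod_{j=0}^{n-1}|\phi_t(L^{j}x)|^2 = |P_{1,t}(x)|^2|P_{2,t}(x)|^2$, with the Jacobian $L^{m-n}$ accounting for the difference between the target bounds $cKN^{-\alpha\epsilon_0}$ in \eqref{totalest} and $cKL^{-n}N^{-\alpha\epsilon_0}$ in \eqref{totalest prime}. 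So the content of the Proposition is exactly the statement that \eqref{totalest} can be achieved for some $t\in E$, which is what the remainder of the paper is devoted to proving.

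The main steps, then, are the ones already laid out in the narrative of Section \ref{reduction steps}: first, use Lemma \ref{combin1} together with the Plancherel identity to get \eqref{intnuhat}; second, discard the Fourier-decay factor $L^N\widehat{\chi}_{[0,L^{-N}]}$ by a pigeonholing argument, choosing a scale $n\in[N/4,N/2]$ and block length $m$ (with $m=c_0\log N$ or $m=c_0\log N/\log\log N$ according to whether the good factor has the SSV or only the log-SSV property) so that the averaged sample integral over $[L^{n-m},L^n]\times\tilde E$ is at most $CKm/N$; third, pass from $\tilde E$ to the large subset $E$ of "typical" directions via Chebyshev/Markov, which gives \eqref{beta-fail2} under the assumption that \eqref{beta} fails; and fourth, replace $\prod_{k=1}^N$ by $\prod_{k=1}^n$ on the block $[L^{n-m},L^n]$ using the rapid convergence $\phi_t(L^{-k}x)\to 1$ for $k$ large and $x\le L^n$, yielding \eqref{stuffed-squirrel}. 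Putting \eqref{stuffed-squirrel} against the claimed lower bound \eqref{totalest} (equivalently \eqref{totalest prime}) and comparing growth rates in $N$ produces a contradiction for all $N$ beyond some $N^*$, which forces \eqref{beta} and hence, by \eqref{unstuffed squirrel}, the desired Favard decay. For the log-SSV case one additionally invokes the monotonicity of $\mathrm{Fav}(S_N)$ to pass from the estimate at $N\cdot N^{3\epsilon_0/\log\log N}$ back to $N$.

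The only genuinely substantive point in this reduction — as opposed to bookkeeping — is the pigeonholing that decouples the problem from the Fourier-decay factor and fixes $n$ and $m$ independently of $t$: one must choose $n$ so that the sample integral bound $CKm/N$ holds after averaging over $\tilde E$, and then shrink $\tilde E$ to $E$ so that the per-$t$ bound $2CKm/N$ holds for a set of $t$ still of measure $\gtrsim K^{-1/2}$, all while keeping $n$ in the window $[N/4,N/2]$ so that the tail-truncation $\widehat{\nu_N}\approx\prod_{k=1}^n\phi_t(L^{-k}x)$ is valid on the block. This is exactly the "two issues" discussed after \eqref{intro-e2}: controlling $I_0=\int_0^{L^{-m}}|P_1|^2$ and handling the low-frequency factor $P_2$. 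I expect the hard part of the whole program — though it lies in later sections, not in this Proposition per se — to be establishing the lower bound \eqref{totalest prime} itself, i.e. showing that $P_2$ does not destroy the Salem-type estimate \eqref{intro-e2}; within the scope of Proposition \ref{total est goal} alone, however, everything is a direct transcription of the reductions already carried out, and no new idea is required.
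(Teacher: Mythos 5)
Your reduction is correct and is essentially the paper's own proof: Proposition \ref{total est goal} is stated there precisely as a summary of the reductions of Section \ref{reduction steps} (Lemma \ref{combin1} plus Plancherel giving \eqref{intnuhat}, the pigeonholing in $n$ over the blocks $[L^{n-m},L^n]$, the Chebyshev passage from $\tilde E$ to $E$ giving \eqref{beta-fail2} when \eqref{beta} fails, the tail truncation yielding \eqref{stuffed-squirrel}, the incompatibility of \eqref{stuffed-squirrel} with \eqref{totalest} for $N$ large when $\epsilon_0$ is small, and then \eqref{unstuffed squirrel} together with Propositions \ref{strong result}/\ref{weak result}, using monotonicity of $Fav(S_N)$ in the log-SSV case), which is exactly what you retrace. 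One bookkeeping slip: the change of variables identifying \eqref{totalest} with \eqref{totalest prime} is $x=L^{n}\xi$, i.e.\ $x\mapsto L^{-n}x$ with Jacobian factor $L^{n}$ (so the two target bounds differ by $L^{-n}$), not $x\mapsto L^{m-n}x$ with Jacobian $L^{m-n}$ as you wrote (that map would send $[L^{n-m},L^n]$ to $[1,L^{m}]$); this does not affect the validity of your argument.
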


Because of our pigeonholing steps, the interval of integration is restricted to $[L^{-m},1]$; however, our techniques for proving lower bounds will involve integration on sets containing $[0,L^{-m}]$, e.g. $[0,1]$ as in (\ref{intro-e2}).
We therefore need to control what happens on $[0,L^{-m}]$.

\begin{lemma}\label{poisson-lemma}
For $t\in E$, we have
\begin{equation}\label{poisson}
\int_0^{L^{-m}}|P_1|^2\leq C_0KL^{-n}.
\end{equation}
\end{lemma}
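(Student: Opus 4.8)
The plan is to estimate $\int_0^{L^{-m}}|P_1|^2$ by Fourier methods, exactly as in the high-frequency computation \eqref{intro-e2} but run in reverse, and to exploit the low-frequency separation between $P_1$ and the interval $[0,L^{-m}]$. Recall that $P_1(x)=\prod_{k=m}^{n-1}\phi_t(L^kx)$ is a trigonometric polynomial whose frequencies are integer combinations of $L^k(a+tb)$ with $k\ge m$; after the change of variables $x\mapsto L^{-m}x$, $P_1$ becomes a polynomial with frequencies that are (up to the scaling) bounded below in absolute value by a multiple of $L^{\,m}\cdot(\text{something})$, i.e.\ the relevant frequencies live at scale $\gtrsim 1$ on the rescaled interval $[0,1]$, while $[0,L^{-m}]$ rescales to $[0,1]$. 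So really the statement is: the average of $|P_1|^2$ over a full period is $|\mathcal{A}|\cdot L^{-2(n-m)}\approx L^{m-n}$ (since $|\mathcal A|=L^{\,n-m}$), and we need that integrating only over $[0,L^{-m}]$ does not make things much larger than $L^{-n}$ up to the factor $K$ — this is a quantitative equidistribution statement.

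Concretely, first I would recall from \cite{NPV} (and \cite{BV1}, \cite{BV3}, as the excerpt notes the relevant estimate on $I_0=\int_0^{L^{-m}}|P_1|^2$ was already proved there and is essentially optimal) that $P_1$ is a "spread out" trigonometric polynomial: its mass does not concentrate near $0$. The mechanism is Poisson summation / a comparison with a smooth bump. Choose a nonnegative function $g$ with $g\ge \one_{[0,L^{-m}]}$, with $\widehat g$ nonnegative and well localized — for instance a dilate of a fixed Fejér-type kernel $g(x)=L^{m}\,h(L^{m}x)$ with $h$ as in \eqref{intro-e2} (up to normalization), so that $\widehat g$ is supported essentially in $[-L^{m},L^{m}]$. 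Then
\begin{equation*}
\int_0^{L^{-m}}|P_1|^2\le \int |P_1(x)|^2 g(x)\,dx = L^{-2(n-m)}\sum_{\alpha,\alpha'\in\mathcal A}\widehat g(\alpha-\alpha').
\end{equation*}
The key point is that the frequencies $\alpha$ of $P_1$, being divisible by $L^{m}$ (indeed each $\alpha$ is $L^{m}$ times an integer combination of the $L^{k-m}(a+tb)$, $k\ge m$), have pairwise differences $\alpha-\alpha'$ that are either $0$ or else of size $\ge L^{m}$ times a nonzero algebraic quantity bounded away from $0$; with the localization of $\widehat g$ chosen appropriately (support strictly inside the first nonzero frequency gap, which requires $g$ to be spread at scale slightly larger than $L^{-m}$, e.g.\ replacing $L^{-m}$ by $2L^{-m}$ in the construction), only the diagonal $\alpha=\alpha'$ survives, giving
\begin{equation*}
\int_0^{L^{-m}}|P_1|^2 \le L^{-2(n-m)}\,|\mathcal A|\,\widehat g(0)= L^{-2(n-m)}L^{\,n-m}\cdot C = C\,L^{m-n}.
\end{equation*}
That is off by a factor $L^{m}$ from the target $C_0KL^{-n}$; here is where one uses that $K=L^{c^*m}$ with $c^*$ small — so $L^{m-n}$ vs $KL^{-n}=L^{c^*m-n}$ shows the crude bound is in fact \emph{worse}, not better. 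So the crude "support inside the gap" argument is insufficient, and one must instead retain the off-diagonal terms and use cancellation among them.

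The correct approach, and the one I expect to be the real content, is to quote or reprove the sharper estimate on $I_0$ from \cite{NPV}: rather than dropping to a single period, one uses the self-similar structure. Write $P_1 = \prod_{k=m}^{n-1}\phi_t(L^k x)$ and split off the lowest-frequency factor $\phi_t(L^m x)$, whose period is $\sim L^{-m}$, from the rest $Q(x)=\prod_{k=m+1}^{n-1}\phi_t(L^k x)$, which is roughly constant on the scale $L^{-m}$ up to controllable error — no wait, that is also not quite it. The actual argument (Poisson summation as the lemma's name suggests) is: apply Poisson summation to relate $\int_0^{L^{-m}}|P_1|^2$ to a sum of $|\widehat{\mu}|^2$ over the dual lattice $L^{m}\Z$, where $\mu$ is the measure whose Fourier transform is $P_1$; the terms at nonzero lattice points are controlled by the rapid decay of $\widehat\chi_{[0,L^{-n}]}$-type factors, which bound each off-diagonal contribution, and summing the geometric series yields $\le C_0 \|\mu\|^2 \sim C_0 L^{-(n-m)}\cdot L^{-m} = C_0 L^{-n}$ — with the extra $L^{-m}$ coming precisely from the fact that the natural normalization of $\mu$ restricted to an interval of length $L^{-m}$ carries mass $L^{-m}$ relative to the full measure of mass $1$. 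The factor $K$ in $C_0 K L^{-n}$ is then simply slack that we do not even need, or it absorbs a polynomial-in-$m$ loss from the geometric summation. So the plan is: (i) set up Poisson summation for the measure $\nu$ restricted to $[0,L^{-m}]$ against the kernel $L^m h(L^m \cdot)$; (ii) identify the main term as the $L^{-n}$ from the diagonal; (iii) bound the tail using the decay already established in \cite{NPV}, \cite{BV3} for the high-frequency product (this is the step I expect to be the main obstacle, since it requires knowing that $\prod_{k\ge m}|\phi_t(L^k\xi)|$ decays away from the lattice $L^m\Z$ fast enough to sum, uniformly in $t$); (iv) conclude, noting $K\ge 1$ gives room to spare.
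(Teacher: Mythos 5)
There is a genuine gap: your argument never uses the hypothesis $t\in E$, and you explicitly conclude that the factor $K$ is ``slack that we do not even need.'' That cannot be right, because the unconditional bound $\int_0^{L^{-m}}|P_1|^2\le C_0L^{-n}$ is false: for directions with heavy stacking (in the extreme, a direction like $t=0$, or $t$ rational with small denominator, where the projection of the level-$M$ set collapses with multiplicity exponential in $M$) the frequencies $\sum_k L^k(a_k+tb_k)$ of $P_1$ cluster, the near-diagonal terms survive, and the integral is of order $(\text{multiplicity})\cdot L^{-n}$, which can be exponentially larger than $KL^{-n}$. This is precisely why the exceptional set $\tilde E$ was introduced. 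The same issue undermines two intermediate claims in your sketch: for real $t$ the differences $\alpha-\alpha'$ of frequencies of $P_1$ are \emph{not} bounded below by $\sim L^m$ (near-resonances are exactly the stacking phenomenon), and there is no decay of $\prod_{k\ge m}|\phi_t(L^k\xi)|$ away from the lattice $L^m\Z$ that is uniform in $t$ and summable -- the unavailability of such pointwise control is the theme of Section \ref{SSVsect}. So your step (iii) rests on an ingredient that does not exist, while the ingredient that is both available and essential (the defining property of $E$) is discarded.

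The argument the paper has in mind (cited to \cite{BV3} and Section 5.4 of \cite{BThesis}) runs through the counting function. By self-similarity, the substitution $y=L^nx$ gives $\int_0^{L^{-m}}|P_1|^2\,dx=L^{-n}\int_0^{L^{n-m}}|\widehat{\nu}_{n-m}(y)|^2\,dy$, since the sub-product over the scales $k=m,\dots,n-1$ is a rescaled copy of the full $(n-m)$-fold product $\widehat{\nu}_{n-m}$. Because $t\in E\subseteq\tilde E$, Lemma \ref{combin1} gives $\|f_{n-m,t}\|_2^2\lesssim K$, i.e.\ control of $\int|\widehat{\nu}_{n-m}|^2$ against the weight $|L^{n-m}\widehat{\chi}_{[0,L^{-(n-m)}]}|^2$; the ``Poisson kernel computation'' referred to in the text is what upgrades this weighted estimate to the unweighted bound $\int_0^{L^{n-m}}|\widehat{\nu}_{n-m}|^2\lesssim K$ on the relevant interval. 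Combining the two displays yields exactly $C_0KL^{-n}$: the factor $K$ is the stacking bound built into the definition of $\tilde E$, not a loss to be absorbed. Indeed, as noted right after the lemma and in the Remark following Proposition \ref{gamma want}, the estimate \eqref{poisson} is essentially sharp, which is why the later construction must produce $|\Gamma|\gtrsim KL^{-m}$ rather than merely $KL^{-Cm}$.
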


The above estimate appears in \cite{BV3} and it also comprises Section 5.4 of \cite{BThesis}. The localized estimate \eqref{poisson} follows via a Poisson kernel computation from the estimates that define $E$. Note that $E$ was defined in terms of a supremum over sub-products of $\hat{f}_n$; the ``Poisson localization'' lemma allows us to obtain such estimates for high-frequency sub-products on such smaller intervals.

For a heuristic comparison to the independent case, observe that $|\phi|^2$ has the average value $1/L$, so that $C_0K$ represents the maximum possible gain in the average product beyond the mere product of averages:
$$\int_a^{a+L^{-m}}|\prod_{k=m}^n\phi_t(L^kx)|^2\leq C_0K\cdot L^{-m}\cdot (1/L)^{n-m}=C_0KL^{-n}$$
In fact, we do not expect to do much better, as $|P_1(0)|=1$ is the maximum, and such factors $\phi(L^k\cdot)$ quickly begin to have size $1$ rather than $1/L$ on much smaller intervals near $0$.

We explained in the introduction that our strategy will be to integrate $|P(\xi)|^2$ on a ``good" set $G\subset [0,1]$ where $P_2$ is bounded away from zero. Lemma \ref{poisson-lemma} explains why we must look for sets $G$ such that
\begin{equation}\label{good est}
\int_{G}|P_1|^2\geq C_2KL^{-n}
\end{equation}
with $C_2>C_0$, so that the integral in (\ref{good est}) dominates \eqref{poisson}; in particular, the interval $[0,L^{-m}]$ can then be removed from $G$ and \eqref{good est} still holds up to a change of constant $C_2$. Given that (\ref{poisson}) is essentially optimal, the requirement (\ref{good est}) cannot be relaxed.


\section{The main argument}

Recall that for each of the sets $A$ and $B$, we decomposed $\phi_A$ and $\phi_B$ into ``good" and ``bad" factors $\phi_A=\phi'_A\phi_A''$, $\phi_B=\phi_B'\phi_B''$ (Definition \ref{A1234}). We claim that:

\medskip

1) The factors $\phi'_A$ and $\phi'_B$ have the SSV property, with the $c_3/c_2$ ratio large enough.

\medskip

2) The product $\phi''_A(\xi)\phi''_B(t\xi)$ is SLV-structured, with all constants uniform in $t$.

\medskip

We defer the proof of 1) and 2) to Section 4 (Proposition \ref{ssv factors}) and Sections 5-6, respectively. For now, we will see how 1) and 2) imply Theorem \ref{main thm}.

In the sequel, we will use the notation
$$P_{1,A}=\prod_{j=m+1}^n\phi_A(L^j\xi)\,\,\, ,P_{2,A}=\prod_{j=1}^m\phi_A(L^j\xi),$$
and similarly for $B$. The notation $P_A'$, $P_A''$, $P_{2,A}'$, etc. should be self-explanatory,
e.g. $P_{2,A}'=\prod_{j=1}^m\phi_A'(L^j\xi)$.


\subsection{The SSV estimate}\label{SSV vs non}

The goal of this subsection is to prove Proposition \ref{ssv goal} and Corollary \ref{ssvcor}. Some notations will need to be established to give precise statements, but the reader is free to accept the proof as a black box with no harm to the rest of the paper. 

Recall the definition of $m$ and $\psi(m)$:
\begin{itemize}
\item If $\phi_A',\phi_B'$ both have the SSV property, $m=c_0\log n$ and $\psi(m)=L^{-c_1m}=N^{-\epsilon}$
\item If $\phi_A',\phi_B'$ both have at least the log-SSV property, $m=c_0\log n/\log\log n$ and $\psi(m)=L^{-c_1m\log m}\approx N^{-\epsilon'}$.
\end{itemize}
Let $SSV_A:=\{\xi\in[0,1]:P_{2,A}'(\xi)\leq \psi(m)\}$. The definition of $SSV_B$ is similar up to mild rescaling:  $SSV_B(t):=\{\xi\in[0,1]:\prod_{k=1}^m |P_{2,B}'(t\xi)|\leq\psi(m)\}$. (If we regard $SSV_B$ as a subset of $\rr$ and not just $[0,1]$, then this is just a different scaling of this larger set restricted to the same interval $[0,1]$ afterwards.) Then 
$$SSV(t)=SSV_\psi(t)\subseteq SSV_A\cup SSV_{B}(t),$$
and $|P_{2,t}'|\gtrsim \psi(m)$ (with $c_1$ replaced by $2c_1$) outside of $SSV(t)$.

The (log-)SSV property says that $SSV(t)$ is contained in $L^{c_2m}$ intervals of size $L^{-c_3m}$, where $c_3>c_2$. Let us also assume that $c_2>2$; if not, then by subdividing intervals, we may easily change the triple of SSV constants $(c_1,c_2,c_3)$ to $(c_1,c_2+2,c_3+2)$ by simply subdividing the SSV intervals.

Our goal in this subsection is to prove the following. 

\begin{proposition}\label{ssv goal}
Suppose that $\epsilon_0$ is small enough, and that
\begin{equation}\label{beta-fail}
|E|\geq \frac{1}{2K^{1/2}}.
\end{equation}
Then
$$I:=\frac1{|E|}\int_0^1\int_{SSV(t)\cap[L^{-m},1]}|P_{1,t}(\xi)|^2 d\xi dt\leq \frac{C_0}{4}KL^{-n}$$
\end{proposition}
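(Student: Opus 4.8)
The plan is to bound $I$ by splitting the inner integral over $SSV(t)$ according to which of $SSV_A$ or $SSV_B(t)$ each point belongs to, and then on each of these two pieces apply the ``Salem trick'' upper bound together with the structural information (a small number of short intervals) provided by the (log-)SSV property. The key point is that $P_{1,t}(\xi)=\sum_{\alpha\in\mathcal A}e^{2\pi i\alpha\xi}$ with $|\mathcal A|=L^{n-m}$, so $|P_{1,t}|^2$ is a trigonometric polynomial with frequencies in $[-L^n,L^n]$; on a single interval $J$ of length $L^{-c_3m}$ one has the elementary bound $\int_J|P_{1,t}|^2\,d\xi\le \|P_{1,t}\|_\infty^2\cdot|J|$, but that is far too lossy. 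Instead I would use the bound coming from (\ref{poisson}) applied after rescaling: on any interval of length $L^{-m}$ we already know $\int|P_1|^2\le C_0KL^{-n}$, and on a still shorter interval $J$ of length $L^{-c_3m}$ one gets, by the same Poisson-kernel / mean-value reasoning behind Lemma \ref{poisson-lemma}, an estimate of the form $\int_J|P_{1,t}|^2\,d\xi\lesssim K L^{-n}\cdot L^{-(c_3-?)m}$ — i.e. the localization lemma gives a gain proportional to the shrinkage of the interval, down to the natural scale. The precise statement I'd want is: for any interval $J$ with $|J|\ge L^{-n}$, $\int_J |P_{1,t}|^2 \le C_0 K L^{-n}\cdot (|J| L^m)$ when $|J|\le L^{-m}$, which is exactly the displayed heuristic inequality in the paper with $a$-interval of length $|J|$ rather than $L^{-m}$.

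Granting such a per-interval bound, the main computation is just bookkeeping: $SSV(t)\subseteq SSV_A\cup SSV_B(t)$, and each of $SSV_A$, $SSV_B(t)$ is covered by $L^{c_2m}$ intervals of length $L^{-c_3m}$ (here one uses that the SSV constants $(c_1,c_2,c_3)$ can be taken with $c_3/c_2$ as large as desired, at the cost of enlarging $c_1$ — which is harmless since $c_1$ only affects $\psi(m)$, not the interval count). Summing the per-interval bound over these $2L^{c_2m}$ intervals gives
$$
\int_{SSV(t)\cap[L^{-m},1]}|P_{1,t}|^2\,d\xi \;\le\; 2L^{c_2m}\cdot C_0 K L^{-n}\cdot L^{-(c_3-1)m}\;=\;2C_0 K L^{-n}\, L^{-(c_3-c_2-1)m}.
$$
Since $c_3\gg c_2$, the factor $L^{-(c_3-c_2-1)m}$ is much smaller than $1/8$ once $m$ is large (equivalently $N$ large, equivalently $\epsilon_0$ small controlling $c_0$ and hence $m$), so the bound is $\le \tfrac{C_0}{4}KL^{-n}$. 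Averaging over $t\in[0,1]$ (the outer integral has total mass $1$ and everything in sight is $t$-uniform because the SSV constants for $\phi'_A,\phi'_B$ are uniform in $t$) preserves the bound, and dividing by $|E|\ge \tfrac12 K^{-1/2}$ — wait, this would multiply by $K^{1/2}$, so I must instead note that the outer integral is over $t\in[0,1]$ but the integrand is supported where the estimates hold; the cleanest route is: $I \le \frac{1}{|E|}\int_0^1 (\text{inner})\,dt$, and since the inner bound $2C_0KL^{-n}L^{-(c_3-c_2-1)m}$ holds for every $t$, we get $I\le \frac{1}{|E|}\cdot 2C_0KL^{-n}L^{-(c_3-c_2-1)m}$; using (\ref{beta-fail}), $\frac1{|E|}\le 2K^{1/2}$, so $I\le 4C_0 K^{3/2}L^{-n}L^{-(c_3-c_2-1)m}$, and since $K=L^{c^*m}$ with $c^*$ as small as we like, $K^{3/2}L^{-(c_3-c_2-1)m}=L^{(\frac32 c^* - (c_3-c_2-1))m}\le \tfrac1{16}$ for $m$ large. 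Hence $I\le \tfrac{C_0}{4}KL^{-n}$.

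The main obstacle I anticipate is establishing the correct per-interval bound $\int_J|P_{1,t}|^2\lesssim KL^{-n}\cdot(|J|L^m)$ for short intervals $J\subset[0,1]$: Lemma \ref{poisson-lemma} as stated only gives this on $[0,L^{-m}]$, so one must verify that the Poisson-localization argument of \cite{BV3} / Section 5.4 of \cite{BThesis} actually produces the stronger scale-by-scale estimate, or equivalently, run the argument with $P_1$ replaced by a slightly longer subproduct so that an interval of length $L^{-c_3 m}$ plays the role that $[0,L^{-m}]$ played for $P_1$. This is a matter of re-indexing: the factors $\phi_t(L^k\xi)$ for $k$ between $m$ and $c_3 m$ are $\approx 1$ on an interval of length $L^{-c_3 m}$, so on such an interval $P_{1,t}$ essentially equals $\prod_{k=c_3m}^{n-1}\phi_t(L^k\xi)$, a polynomial with $L^{n-c_3m}$ frequencies, and the Salem/Poisson bound for \emph{that} polynomial on an interval of length $L^{-c_3m}$ gives exactly $K L^{-n+c_3 m}\cdot L^{-c_3 m}\cdot(\text{scale factor})$ — matching what is needed. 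Once this localized estimate is in hand, the rest is the elementary summation above. I would also double-check the reduction $SSV(t)\subseteq SSV_A\cup SSV_B(t)$ and the claim that $P'_A$, $P'_B$ being SSV with large $c_3/c_2$ transfers to $\phi'_t=\phi'_A(\xi)\phi'_B(t\xi)$ with $c_3/c_2$ still large and constants uniform in $t$ — this is where the product structure is essential and is presumably the content of Proposition \ref{ssv factors}, which we are entitled to assume.
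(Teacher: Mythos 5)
You correctly reduce to $SSV(t)\subseteq SSV_A\cup SSV_B(t)$ and to summing over $L^{c_2m}$ intervals of length $L^{-c_3m}$, but the per-interval estimate $\int_J|P_{1,t}|^2\,d\xi\lesssim KL^{-n}\,|J|L^{m}$ on which your whole argument rests is not available, and in the uniformity you need it is false. First, Lemma \ref{poisson-lemma} concerns only the single interval $[0,L^{-m}]$ and only $t\in E$, while your outer integral runs over all $t\in[0,1]$; for $t\notin E$ there is no localization input at all, and the trivial bound $|P_{1,t}|\le 1$ gives only $\int_{SSV(t)}|P_{1,t}|^2\le L^{(c_2-c_3)m}$, which is polynomially small in $N$ while the target $KL^{-n}$ is exponentially small (recall $n\approx N/4$, $m\approx c_0\log N$), so it misses by an exponential factor. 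Second, even for $t\in E$ the scale-by-scale claim fails: $P_{1,A}(\xi)=\prod_{k=m}^{n-1}\phi_A(L^k\xi)$ is periodic with period $L^{-m}$ and has modulus $1$ at every multiple of $L^{-m}$, hence spikes of height $\approx 1$ and width $\approx L^{-n}$ throughout $[L^{-m},1]$; an SSV interval of length $L^{-c_3m}$ containing such a spike (for $t$ making $|P_{1,B}(t\cdot)|$ large there too) carries mass $\gtrsim L^{-n}$, exceeding your claimed $KL^{-n}L^{-(c_3-1)m}$. Your proposed repair rests on a misreading: away from the origin the factors $\phi_t(L^k\xi)$, $m\le k\le c_3m$, are roughly constant on such an interval but not roughly $1$; after discarding them (bounding by $1$, as the paper does), the diagonal-only computation on an interval of length $L^{-c_3m}$ works for the $A$-factor alone, whose frequencies are integer multiples of $L^{c_3m}$, but not for the mixed product $A_{\ell}^n(e^{2\pi i\xi})B_{\ell}^n(e^{2\pi it\xi})$ at fixed $t$: the frequency differences $L^{\ell}\bigl(\lambda-\lambda'+t(\mu-\mu')\bigr)$ can be resonantly small, and no uniform-in-$t$ orthogonality holds on short intervals.

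This is exactly why the paper does not attempt a fixed-$t$ bound (note Corollary \ref{ssvcor} extracts only one good $t_0$, not all $t\in E$): the $t$-average is used essentially. After splitting $I\le I_A+I_B$, one separates variables via $u=t\xi$, so the $B$-factor is integrated over a full period in $u$, where Lemma \ref{exp dichot} (non-stacking, hence only diagonal terms survive) gives $\int_0^1|B_m^n(e^{2\pi iu})|^2\,du\le |B|^{n-m}$, while only the $A$-factor is integrated over the $SSV_A$ intervals; there $A_m^\ell$ is bounded trivially and Lemma \ref{exp dichot} is applied to $A_\ell^n$ over an interval of its exact period $L^{-\ell}$, $\ell=c_3m$. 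The resulting gain $|B|^{-(\ell-m)}$ is what beats the losses $L^{c_2m}$, $L^{m}$ and $1/|E|\le 2K^{1/2}$ when $c_3/c_2$ is large, using (\ref{beta-fail}). Your final bookkeeping would be fine if the localized input held, but as proposed the central step has a genuine gap.
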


\begin{corollary}\label{ssvcor}
Under the same assumptions, 
there exists a $t_0\in {E}$ such that 
$$\int_{SSV(t_0)\cap[L^{-m},1]}|P_{1,t_0}(\xi)|^2 d\xi\leq\frac{C_0}{2}KL^{-n}.$$
\end{corollary}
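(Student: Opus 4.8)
The plan is to obtain Corollary \ref{ssvcor} from Proposition \ref{ssv goal} by a routine averaging (Markov-type) argument; all the real content sits in the proposition, so the corollary is just a pigeonholing step over the variable $t$.

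First I would set, for $t\in[0,1]$,
\[
g(t):=\int_{SSV(t)\cap[L^{-m},1]}|P_{1,t}(\xi)|^2\,d\xi ,
\]
which is nonnegative and (as already used implicitly in Proposition \ref{ssv goal}) measurable in $t$. Proposition \ref{ssv goal} states precisely that $\frac1{|E|}\int_0^1 g(t)\,dt\le \frac{C_0}{4}KL^{-n}$. Since $g\ge 0$ and $E\subset[0,1]$, we have $\int_E g(t)\,dt\le\int_0^1 g(t)\,dt$, so the average of $g$ over $E$ with respect to the normalized measure $dt/|E|$ is also at most $\frac{C_0}{4}KL^{-n}$.

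Next I would apply Markov's inequality on the probability space $(E,\,dt/|E|)$: the set $\bigl\{t\in E:\ g(t)>\frac{C_0}{2}KL^{-n}\bigr\}$ has normalized measure at most $\bigl(\frac{C_0}{4}KL^{-n}\bigr)/\bigl(\frac{C_0}{2}KL^{-n}\bigr)=\frac12$. Hence its complement inside $E$ is nonempty — in fact has normalized measure at least $\frac12$ — so there exists $t_0\in E$ with $g(t_0)\le\frac{C_0}{2}KL^{-n}$, which is exactly the asserted bound.

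There is no genuine obstacle beyond Proposition \ref{ssv goal} itself; the only point to keep in mind is that the outer integral defining $I$ ranges over all of $[0,1]$ rather than over $E$, which is harmless here because the integrand is nonnegative (dropping the part outside $E$ only strengthens the average bound we use). It is also worth recording, for later use in combining this with the SLV construction, that the argument in fact produces a set of $t\in E$ of normalized measure at least $\frac12$ for which the conclusion holds, not merely a single $t_0$.
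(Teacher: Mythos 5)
Your argument is correct and is essentially the paper's own (implicit) deduction: Corollary \ref{ssvcor} is obtained from Proposition \ref{ssv goal} exactly by this nonnegativity-plus-averaging (Markov/pigeonhole) step over $t\in E$, using that $|E|>0$ by \eqref{beta-fail}. Nothing further is needed.
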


The proof below, based on separation of variables, will follow closely the analogous argument in \cite{NPV} and \cite{LZ}. We remark that an alternative argument given in \cite[Lemma 22 and Proposition 15]{BV3}, based on a more sophisticated variant of the Poisson localization lemma (Lemma \ref{poisson-lemma} in this paper)  and applicable to general (not necessarily product) self-similar sets, does not appear to be quantitatively strong enough  to work in our setting.

We start with a few reductions:

$$I=\frac1{|E|}\int_0^1\int_{SSV(t)\cap[L^{-m},1]}|P_{1,t}(\xi)|^2 d\xi dt$$
$$=\frac1{|E|}\int_0^1\int_{(SSV_A\cup SSV_B(t))\cap[L^{-m},1]}|P_{1,A}(\xi) P_{1,B}(t\xi)|^2 d\xi dt$$
Hence
$$I\leq\frac1{|E|}\int_0^1\int_{SSV_A\cap[L^{-m},1]}|P_{1,A}(\xi) P_{1,B}(t\xi)|^2 d\xi dt$$
$$+ \frac1{|E|}\int_0^1\int_{SSV_B\cap[L^{-m},1]}|P_{1,A}(\xi) P_{1,B}(t\xi)|^2 d\xi dt$$
$$=:I_A + I_B$$
It now suffices to prove that
\begin{equation}\label{ssv goal2}
I_A,I_B\leq\frac{C_0}{8}KL^{-n}.
\end{equation}

So far, we have used averaging over $E$ to replace the single-variable integral on $SSV(t)$ with a double integral in both $\xi$ and $t$. Furthermore, one of the variables ranges over the entire interval $[0,1]$, where good estimates are available. We now wish to take advantage of the product structure of our set and rewrite each integral in (\ref{ssv goal2}) as a product of two integrals with the variables decoupled. 

The integral on $[0,1]$ will be easy to compute. The second one, on the small SSV intervals, will require some work. We will need to further split $P_1$ into frequency ranges, and we set up the notation for this:
$$A_{m_1}^{m_2}(x)=\prod_{k=m_1+1}^{m_2}A(x^{L^k}),$$
and similarly for $B$. (Note that this is not normalized, so that $P_{1,A}(\xi)=|A|^{m-n}A_m^n(e^{2\pi i\xi})$.) The reason for looking at such a splitting is that high-frequency factors $A_\ell^n$, with $\ell>m$ sufficiently large depending on the constants in the SSV estimates, will be adapted to have good estimates on $SSV_A$, and similarly for $B$.

We will only prove (\ref{ssv goal2}) for $I_A$, the case of $I_B$ being almost identical.

\begin{lemma}\label{exp dichot}
One of the two following things must occur:
\begin{itemize}
\item $Fav(S_n)\lesssim e^{-cn}$
\item $\int_{\xi_0}^{\xi_0+L^{-m_1}} |A_{m_1}^{m_2}(e^{2\pi i\xi})|^2\, d\xi\lesssim |A|^{m_2-m_1}L^{-m_1}$
\end{itemize}
\end{lemma}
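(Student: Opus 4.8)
Looking at Lemma \ref{exp dichot}, I need to prove a dichotomy: either the Favard length of $S_n$ decays exponentially, or a certain $L^2$ average bound holds for the (unnormalized) partial product $A_{m_1}^{m_2}$.

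\textbf{The plan.} The key observation is that $A_{m_1}^{m_2}(e^{2\pi i\xi}) = \prod_{k=m_1+1}^{m_2} A(e^{2\pi i L^k\xi})$ is (up to normalization by $|A|^{m_2-m_1}$) a partial Riesz-type product of the form already studied, and its $L^2$ norm over a full period relates to a counting/self-similarity quantity. Concretely, $\prod_{k=m_1+1}^{m_2} A(x^{L^k}) = \sum_{\alpha} x^\alpha$ where $\alpha$ ranges over the digit-expansion set $\{a_{m_1+1} L^{m_1+1} + \cdots + a_{m_2} L^{m_2} : a_k \in A\}$ (after a suitable index shift). The point is whether these $|A|^{m_2-m_1}$ exponents are \emph{distinct}. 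If they are all distinct, then $\int_0^1 |A_{m_1}^{m_2}(e^{2\pi i\xi})|^2\,d\xi = |A|^{m_2-m_1}$ exactly by orthogonality of characters, and then a standard sub-averaging argument (the integrand has period dividing $1$ and we can compare the integral over $[\xi_0,\xi_0+L^{-m_1}]$ to the full integral, since the relevant frequencies are multiples of $L^{m_1+1}$, making the function essentially $L^{-m_1}$-periodic after the low-frequency factors are accounted for) gives the second alternative with the stated constant $|A|^{m_2-m_1}L^{-m_1}$.

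So the real content is: \emph{if the exponents are not all distinct}, I must extract exponential decay of $Fav(S_n)$. A collision $a_{m_1+1}L^{m_1+1} + \cdots = a'_{m_1+1}L^{m_1+1} + \cdots$ with the tuples $(a_k)$, $(a'_k)$ distinct means precisely that the projection map onto the real axis (direction $\theta = 0$, i.e. $t=0$) is not injective on the relevant approximate copies of $A_N$ — equivalently, $|A_N|$ has genuine overlaps in its real projection, so $|proj_0(A_N \times B_N)| \le |A_N|$ drops below the generic count by a fixed multiplicative factor $\rho < 1$ per block of scales. Iterating this overlap across $\sim n$ scales gives $|proj_0(S_n)| \lesssim \rho^{cn}$. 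But a single good direction isn't enough for $Fav$; here I would invoke the fact (this is exactly the ``tiling condition'' circle of ideas from \cite{LZ}, and the structure of self-similar sets) that if $|proj_\theta(S_\infty)| = 0$ for one rational direction in a suitably robust/quantitative sense — which a per-scale overlap provides — then by the self-similar structure and a covering argument over directions, $Fav(S_n)$ inherits exponential decay. Alternatively, and more in the spirit of this paper, one shows directly that $\int_0^1 |A_{m_1}^{m_2}|^2 \le (1-\delta)|A|^{m_2-m_1}$ forces, via the reduction in Proposition \ref{total est goal} run ``in reverse,'' that the whole chain of integrals $\int_{L^{-m}}^1 |P_1 P_2|^2$ is exponentially small, hence so is $Fav$.

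\textbf{Main obstacle.} The delicate point is making the second bullet quantitatively sharp — i.e., that the constant is genuinely $|A|^{m_2-m_1}L^{-m_1}$ and not something larger. This requires care because $A_{m_1}^{m_2}$ on a short interval $[\xi_0, \xi_0 + L^{-m_1}]$ is \emph{not} literally periodic; one must argue that the lowest frequency appearing, $\sim L^{m_1+1}$, is large enough compared to $L^{m_1}$ that averaging over an interval of length $L^{-m_1}$ sees (essentially) a full period, so the sub-average equals the global average up to a constant absorbed by $\lesssim$. I expect this to be a short computation using $\widehat{h}\ge 0$ test functions as in (\ref{intro-e2}), but one has to be slightly careful that the constant doesn't secretly depend on $m_2 - m_1$. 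The genuinely substantive step — and the one I'd allocate most effort to — is the first bullet: converting an exponent-collision in $A_{m_1}^{m_2}$ into exponential decay of $Fav(S_n)$ rather than merely of $|proj_0(S_n)|$, which needs the self-similarity to spread the single-direction overlap into a statement about a positive-measure set of directions, or else a direct back-substitution into the integral estimates of Section 2.
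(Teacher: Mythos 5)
The dichotomy you identify at the level of exponents (collision vs.\ no collision) is the right one, and your no-collision branch is essentially the paper's argument — though your worry there is unfounded: every exponent of $A_{m_1}^{m_2}(x)=\prod_{k=m_1+1}^{m_2}A(x^{L^k})$ is an integer multiple of $L^{m_1+1}$, so every off-diagonal term $e^{2\pi i L^{m_1}(\lambda_{j_1}-\lambda_{j_2})\xi}$ in $|A_{m_1}^{m_2}(e^{2\pi i\xi})|^2$ has period exactly dividing $L^{-m_1}$ and integrates to zero \emph{exactly} over $[\xi_0,\xi_0+L^{-m_1}]$; only the $|A|^{m_2-m_1}$ diagonal terms survive, each contributing $L^{-m_1}$. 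No $\widehat h\ge 0$ test function or approximate-periodicity argument is needed, and the constant cannot depend on $m_2-m_1$.

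The genuine gap is in the collision branch. You read a repeated exponent as non-injectivity of the projection in the single direction $\theta=0$, and then you need two vague mechanisms (a ``covering argument over directions'' via self-similarity, or running Proposition \ref{total est goal} ``in reverse'') to upgrade decay of $|proj_0(S_n)|$ to decay of $Fav(S_n)$. Neither works as stated: the tiling circle of ideas from \cite{LZ} concerns the opposite situation ($|proj_{\theta_0}(S_\infty)|>0$), and the reduction of Section 2 does not reverse — smallness of the trigonometric integrals is exactly what is \emph{consistent with} bad directions, so it cannot be fed back to conclude smallness of $Fav$. The point you are missing is that a coefficient of $A_0^j(x)$ (or $B_0^j(x)$) exceeding $1$ is a collision inside the \emph{one-dimensional factor set} $A_j$ itself: the $j$-th Cantor iteration of $A$ has at most $|A|^j-1$ distinct points instead of $|A|^j$. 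Because $S_N=A_N\times B_N$ is a product, this deficit multiplies across blocks of $j$ scales, so $S_{jk}$ consists of at most $(|A|^j-1)^k|B|^{jk}$ squares of side $L^{-jk}$, and hence for \emph{every} direction $\theta$ one has $|proj_\theta(S_{jk})|\lesssim (|A|^j-1)^k|B|^{jk}L^{-jk}=\bigl(\tfrac{|A|^j-1}{|A|^j}\bigr)^k$. This gives $Fav(S_n)\lesssim e^{-cn}$ directly, uniformly in direction, with no need to spread information from $\theta=0$ to other angles. (This is the ``stacking''/failure-of-open-set-condition observation in the paper's proof.) As written, your collision branch does not close, so the proof is incomplete at precisely the step you flagged as the substantive one.
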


\medskip

\noindent\textbf{Proof of Lemma \ref{exp dichot}:}
Suppose first that we have the following ``stacking condition'': for some $j\in\nn$, $B_0^j(x)$ has at least one coefficient $\not\in\{0,1\}$. Geometrically, this means that $L^{-j}$-neighbourhood of $B_j$ contains at most $|B|^{j}-1$ distinct intervals, hence there is a row of self-overlapping discs somewhere in $S_j$. This is an obvious failure of the open set condition, and we claim that in this case $S$ has Hausdorff and Minkowski dimension strictly less than 1. Indeed, 
it is easy to see that $|proj_\theta(S_{kj})|\lesssim\frac{(|A|^j|B|^j-1)^k}{|A|^{jk}|B|^{jk}}=\big(\frac{|B|^j-1}{|B|^j}\big)^k=:\gamma^{-{kj/j}}$ for some $0<\gamma<1$. In particular, Theorem \ref{main thm} is true with an exponential bound $Fav(S_n)\lesssim(\gamma^{1/j})^n$.

Now suppose there is no stacking. Then
$$\int_{\xi_0}^{\xi_0+L^{-m_1}} |A_{m_1}^{m_2}(e^{2\pi i\xi})|^2 d\xi=\int_{\xi_0}^{\xi_0+L^{-m_1}}|\sum_{j=1}^{|A|^{m_2-m_1}}e^{2\pi iL^{m_1}\lambda_j\xi}|^2 d\xi,$$
where the $\lambda_j\in \nn$ are distinct. Then
$$|\sum_{j=1}^{|A|^{m_2-m_1}}e^{2\pi iL^{m_1}\lambda_j\xi}|^2
=\sum_{j_1,j_2=1}^{|A|^{m_2-m_1}}e^{2\pi iL^{m_1}(\lambda_{j_1}-\lambda_{j_2})\xi}$$

These have common period $L^{-m_1}$. Clearly only the diagonal terms survive the integration over the given integral, with value $L^{-m_1}$. As there are $|A|^{m_2-m_1}$ such diagonal terms, the second case holds. This finishes the proof.

\hfill$\square$

\medskip

We now return to the proof of Proposition \ref{ssv goal}. Of course, we assume that the sets $A$ and $B$ are non-stacking from now on, as the main theorem is a triviality in the other case.

Let us separate variables now. Let $(\xi,t)\to (\xi,u)$, where $u=\xi t$, $dt=du/\xi$. Then
$$I_A\leq\frac1{|E|}L^{-2(n-m)}\sum_{j=1}^{L^{mc_{2,A}}}\int_{a_j}^{a_j+L^{-mc_{3,A}}}|A_m^n(e^{2\pi i\xi})|^2\frac{d\xi}{\xi}\int_0^1 |B_m^n(e^{2\pi iu})|^2 du$$
First, Lemma \ref{exp dichot} gives us
$$I_A\leq\frac{|B|^{n-m}}{|E|}L^{-2(n-m)}\sum_{j=1}^{L^{mc_{2,A}}}\int_{a_j}^{a_j+L^{-mc_{3,A}}}|A_m^n(e^{2\pi i\xi})|^2\frac{d\xi}{\xi}$$

The $a_j$ are the left endpoints of the SSV intervals in $[L^{-m},1]$ (we may take $a_j=L^{-m}$ instead if $L^{-m}$ is in the interior). In the above sum, let $a_*$ be $a_j$ such that the summand is maximized. Also let $\ell_A=\ell=mc_{3,A}$. Also drop the $A$ to write $c_2,c_3$ instead of $c_{2,A}, c_{3,A}$ with implied understanding of the dependence. Note also that $\frac1{\xi}\leq L^m$ on $[L^{-m},1]$.

Now we have
$$I_A\leq\frac{|B|^{n-m}L^m L^{mc_2}}{|E|}L^{-2(n-m)}\int_{a_*}^{a_*+L^{-\ell}}|A_m^n(e^{2\pi i\xi})|^2 d\xi.$$
We need a trivial estimate followed by another application of Lemma \ref{exp dichot}.
$$I_A\leq\frac{|B|^{n-m}L^m L^{mc_2}}{|E|}L^{-2(n-m)}\int_{a_*}^{a_*+L^{-\ell}}|A_m^\ell(e^{2\pi i\xi})A_\ell^n(e^{2\pi i\xi})|^2 d\xi$$
$$\leq\frac{|B|^{n-m}L^m L^{mc_2}|A|^{2(\ell-m)}}{|E|}L^{-2(n-m)}\int_{a_*}^{a_*+L^{-\ell}}|A_\ell^n(e^{2\pi i\xi})|^2 d\xi$$
$$\leq\frac{|B|^{n-m}L^m L^{mc_2}|A|^{2(\ell-m)}}{|E|}L^{-2(n-m)}|A|^{n-\ell}L^{-\ell}.$$

We would like to have $I_A\leq \frac{C_0}{8}KL^{-n}.$ The previous estimate implies this if and only if
\begin{equation}\label{ssv-goal3}
|E|\geq\frac{8}{C_0K}\frac{L^{c_2m+m}}{|B|^{\ell-m}}.
\end{equation}
Recall that $K=L^{c^*m}$ and $\ell=c_3m$. If $c_3/c_2$ is large enough, the right side of (\ref{ssv-goal3}) is bounded by $\frac{1}{4}L^{-c^*m/2}=\frac{1}{4}K^{-1/2}$ for large $N$. The conclusion now follows from (\ref{beta-fail}).

\hfill$\square$


\subsection{Salem's argument on difference sets}\label{salem}

Choose $t_0\in E$ as in Corollary \ref{ssvcor}. We want to prove that \eqref{totalest prime} holds for this choice of $t_0$. By Proposition \ref{total est goal}, this implies Theorem \ref{main thm}.

\begin{proposition}\label{gamma want}
Suppose that 
\begin{itemize}
\item $\phi'_A$ and $\phi'_B$ have the SSV property with the ratio $c_3/c_2$ large enough,
\item $\phi''(\xi)=\phi''_A(\xi)\phi''_B(t_0\xi)$ is SLV-structured.
\end{itemize}
Then \eqref{totalest prime} holds for $t=t_0$.
\end{proposition}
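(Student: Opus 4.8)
The strategy is to combine the SSV estimate (Corollary \ref{ssvcor}) with a version of Salem's lower bound (the calculation \eqref{intro-e2}) performed on the difference set $\Gamma-\Gamma$ supplied by the SLV-structured hypothesis. First, I would write the integrand in \eqref{totalest prime} as $|P_1 P_2|^2 = |P_1|^2 |P_2'|^2|P_2''|^2$, where $P_2' = P_{2,A}'P_{2,B}'$ and $P_2''=P_{2,A}''P_{2,B}''(t_0\cdot)$, in the notation of Section 3. On the complement of $SSV(t_0)$ in $[L^{-m},1]$ we have $|P_2'|^2 \gtrsim \psi(m)^2$, which is a factor of the form $N^{-O(\epsilon_0)}$; this is exactly the kind of loss \eqref{totalest} tolerates. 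So it suffices to show
\begin{equation}\label{plan-e1}
\int_{([0,1]\setminus SSV(t_0))\cap\{|P_2''|\geq L^{-C_1m}\}}|P_1(\xi)|^2\,d\xi\ \gtrsim\ KL^{-n},
\end{equation}
and then Corollary \ref{ssvcor} removes the $SSV(t_0)$ part while Lemma \ref{poisson-lemma} lets us discard $[0,L^{-m}]$, both at the cost of constants only.

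The heart of the matter is producing \eqref{plan-e1}. Let $\Gamma\subset[0,1]$ be the SLV set for $\phi''$, so that $\Gamma-\Gamma\subset\{|P_2''|\geq L^{-C_1m}\}$ and $|\Gamma|\geq C_2 K L^{-m}$. Following Salem's computation, set $h$ to be (a rescaling of) $\one_\Gamma * \one_{-\Gamma}$; then $h\geq 0$, $h$ is supported on $\Gamma-\Gamma$, $\widehat h = |\widehat{\one_\Gamma}|^2\geq 0$, and $\int h = |\Gamma|^2$. Writing $P_1(\xi)=\sum_{\alpha\in\mathcal A}e^{2\pi i\alpha\xi}$ with $|\mathcal A| = L^{n-m}$ (after normalizing, $P_{1,t_0}=L^{m-n}\sum_{\alpha}e^{2\pi i\alpha\xi}$), the same chain of inequalities as in \eqref{intro-e2} gives
\begin{equation}\label{plan-e2}
\int_{\Gamma-\Gamma}|P_1|^2 \geq c\,L^{-2(n-m)}\sum_{\alpha,\alpha'}\widehat h(\alpha-\alpha') \geq c\,L^{-2(n-m)}|\mathcal A|\,\widehat h(0) = c\,L^{m-n}|\Gamma|^2 \gtrsim K^2 L^{-n-m}.
\end{equation}
Since $|\Gamma|\geq C_2KL^{-m}$, this is $\gtrsim KL^{-n}$ provided $K\gtrsim L^m$... which is false; so the naive bound is off by a factor of $K$, and I would instead only claim $\int_{\Gamma-\Gamma}|P_1|^2\gtrsim K^2L^{-n-m}$ and compare against the Poisson bound $C_0KL^{-n}$ from Lemma \ref{poisson-lemma}. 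Here is where the parameter $K=L^{c^*m}$ with $c^*$ freely small matters: we need the gain $K^2L^{-m} = L^{(2c^*-1)m}$ against $KL^{-n}$... this still requires care. The correct accounting, which I would carry out, is to note that $\Gamma-\Gamma$ is contained in a neighborhood of a coarse lattice of spacing $\sim L^{-m}/K$ (by the construction sketched in the introduction), so $\Gamma-\Gamma$ meets $[L^{-m},1]$ in about $K$ well-separated clusters; the off-diagonal terms $\widehat h(\alpha-\alpha')$ with $\alpha\neq\alpha'$ are nonnegative and can only help, and the key point is that the diagonal contribution $L^{m-n}|\Gamma|^2 \geq L^{m-n}C_2^2K^2L^{-2m} = C_2^2K^2L^{-n-m}$ must dominate $C_0KL^{-n}$, i.e. we need $C_2^2 K L^{-m}\geq$ const, which fails — so in fact the construction of $\Gamma$ must give $|\Gamma|\gtrsim KL^{-m}$ with the understanding that $\Gamma-\Gamma$ still lands in the large-value set, and the comparison is against the $[L^{-m},1]$-localized Poisson bound, where the relevant baseline is $KL^{-n}$ not $L^{-n}$.

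Given the sign subtleties above, the cleanest route — and the one I would actually write — is: apply the Salem calculation on $\Gamma-\Gamma$ to get $\int_{\Gamma-\Gamma}|P_1|^2\gtrsim L^{m-n}|\Gamma|^2$; restrict the integral to $\Gamma-\Gamma\setminus SSV(t_0)$, losing only a lower-order amount by Corollary \ref{ssvcor} (whose bound $\frac{C_0}2 KL^{-n}$ must be beaten, which is why $|\Gamma|$ must be $\gg K^{1/2}L^{-m/2}\cdot L^{(n-m)/2}$-large — precisely the content of \eqref{slv-e2} with the right $K$); insert the $|P_2'|^2\gtrsim\psi(m)^2 = N^{-O(\epsilon_0)}$ and $|P_2''|^2\geq L^{-2C_1m} = N^{-O(1)}$ bounds, noting $C_1$ does not depend on $\epsilon_0$, so the total loss is $N^{-\alpha\epsilon_0}$ for $\alpha$ independent of $\epsilon_0$; and conclude \eqref{totalest prime}. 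The main obstacle, and the step I expect to require the most care, is the bookkeeping in the previous sentence: verifying that the gain $L^{m-n}|\Gamma|^2$ genuinely exceeds the combined Poisson bound \eqref{poisson} and the SSV leftover $\frac{C_0}2KL^{-n}$ after multiplying by the $P_2'$ and $P_2''$ losses — i.e. that \eqref{slv-e2} provides exactly the right power of $K$, and that $C_1$ (hence the $P_2''$ loss) can be absorbed into the $N^{-\alpha\epsilon_0}$ slack because $c^*\approx\sqrt{\epsilon_0}$ is adjustable while $C_1$ is fixed. The non-negativity of $\widehat h$ and of all off-diagonal terms is what makes the Salem step robust; everything else is constant-tracking.
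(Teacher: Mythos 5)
Your overall skeleton is the same as the paper's: Salem's computation on $\Gamma-\Gamma$ with $h$ built from $\one_\Gamma*\one_{-\Gamma}$, then removal of $[-L^{-m},L^{-m}]$ via Lemma \ref{poisson-lemma} and of $SSV(t_0)$ via Corollary \ref{ssvcor}, then insertion of the losses $\psi(m)^2$ and $L^{-2C_1m}$. However, the quantitative heart of the argument fails as you have written it. Taking $h=\one_\Gamma*\one_{-\Gamma}$ unnormalized, your diagonal term is $L^{-2(n-m)}|\mathcal{A}|\,\widehat{h}(0)=L^{m-n}|\Gamma|^2$, and with $|\Gamma|\geq C_2KL^{-m}$ this is only about $K^2L^{-n-m}=(KL^{-m})\cdot KL^{-n}$, i.e.\ it falls short of the Poisson benchmark $C_0KL^{-n}$ in \eqref{poisson} by the factor $KL^{-m}=L^{(c^*-1)m}\ll 1$. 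You notice this deficit but never repair it: the remedies you sketch (exploiting a cluster structure of $\Gamma-\Gamma$ around a coarse lattice, or demanding $|\Gamma|\gg K^{1/2}L^{-m/2}$) either import information that is not part of Definition \ref{slv structured}, or require a lower bound on $|\Gamma|$ strictly stronger than \eqref{slv-e2} — note that $KL^{-m}\ll K^{1/2}L^{-m/2}$ when $KL^{-m}<1$, and the construction of Section \ref{section-gamma} only yields $|\Gamma|\gtrsim L^{-(1-\epsilon)m}$ (your stated threshold also carries a spurious factor $L^{(n-m)/2}$). So, as written, the Salem step does not dominate \eqref{poisson} and the proof does not close.

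The fix is a one-line normalization, and it is exactly what the paper does: take $h=|\Gamma|^{-1}\one_\Gamma*\one_{-\Gamma}$. One still has $0\le h\le 1$ (since $\one_\Gamma*\one_{-\Gamma}\le|\Gamma|$) and $\widehat{h}=|\Gamma|^{-1}|\widehat{\one_\Gamma}|^2\ge 0$, but now $\int h=\widehat{h}(0)=|\Gamma|$, so the diagonal contribution becomes $L^{-2(n-m)}L^{n-m}|\Gamma|=L^{m-n}|\Gamma|$ — one power of $|\Gamma|$, not two — which by \eqref{slv-e2} is at least $C_2KL^{-n}$ and dominates \eqref{poisson} once $C_2$ is taken large relative to $C_0$. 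After that, your remaining steps (discard $[-L^{-m},L^{-m}]$, discard $SSV(t_0)$ via Corollary \ref{ssvcor}, multiply by $|P_2'|^2\gtrsim\psi(m)^2$ off the SSV set and $|P_2''|^2\ge L^{-2C_1m}$ on $\Gamma-\Gamma$) go through essentially verbatim and give \eqref{totalest prime}. This also clarifies the Remark following the paper's proof: \eqref{slv-e2} is calibrated exactly so that $L^{m-n}|\Gamma|$ beats $C_0KL^{-n}$; it is not strong enough for $L^{m-n}|\Gamma|^2$ to do so, which is precisely where your version gets stuck.
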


\noindent\textbf{Proof:}
Fix $t=t_0$, and let $\Gamma$ be the SLV set for $\phi''=\phi''_{t_0}$. This means that
\begin{equation}\label{e-add1}
|P_2''(\xi)|\geq L^{-C_1 m}\hbox{ on }\Gamma-\Gamma,
\end{equation}
\begin{center}and\end{center}
\begin{equation}\label{e-add2}
|\Gamma|\geq C_2KL^{-m}.
\end{equation}

We write $P_1(\xi)=\sum_{\alpha\in\mathcal{A}}
e^{2\pi i\alpha\xi}$ (note that $|\mathcal{A}|=L^{n-m}$). Observe that $|P_i(\xi)|=|P_i(-\xi)|$, $i=1,2$, so that all integrals are symmetric with respect to reflection $\xi\to -\xi$, e.g. 
$$\int_{L^{-m}}^1 |P_1|^2 |P_2|^2 d\xi = \frac{1}{2} \int_{[-1,1]\setminus [-L^{-m}, L^{-m}]}|P_1|^2 |P_2|^2 d\xi.$$

Let $h=|\Gamma|^{-1}\one_\Gamma*\one_{-\Gamma}$, then $0\leq h \leq 1$ and 
$\widehat{h}=|\Gamma|^{-1}|\widehat{\one_\Gamma}|^2\geq 0$. Hence
\begin{align*}
\int_{\Gamma-\Gamma}|P_1(\xi)|^2&\geq \int_{\Gamma-\Gamma}|P_1(\xi)|^2h(\xi)d\xi\\
&\geq C L^{-2(n-m)}\sum_{\alpha,\alpha'}\int_{\Gamma-\Gamma}h(\xi)e^{2\pi i(\alpha-\alpha')\xi}d\xi\\
&\geq C L^{-2(n-m)}\Big(\sum_{\alpha}\int_{\Gamma-\Gamma}h(\xi)d\xi
+\sum_{\alpha\neq \alpha'}\widehat{h}(\alpha-\alpha')\Big)\\
&\geq C L^{-2(n-m)}L^{n-m}|\Gamma|=CL^{m-n}|\Gamma|.
\end{align*}
By (\ref{e-add2}), we have
\begin{equation}\label{e-add3}
\int_{\Gamma-\Gamma}|P_1(\xi)|^2d\xi \geq 2C_0KL^{-n},
\end{equation}
which dominates (\ref{poisson}). It follows that
$$
\int_{(\Gamma-\Gamma)\setminus[-L^{-m},L^{-m}]}|P_1(\xi)|^2d\xi \geq C_0KL^{-n},
$$
hence using also (\ref{e-add1}),
\begin{align*}
\int_{L^{-m}}^1 |P_1(\xi)|^2\,&|P_2'(\xi)|^2|P_2''(\xi)|^2\,d\xi\\
&\gtrsim L^{-2C_1m} \int_{(\Gamma-\Gamma)\setminus[-L^{-m},L^{-m}]}|P_1(\xi)|^2|P_2'(\xi)|^2 d\xi\\
&\gtrsim L^{-2C_1m}\psi(m)^2 \int_{(\Gamma-\Gamma)\setminus\big[[-L^{-m},L^{-m}]\cup SSV(t_0)\big]}|P_1(\xi)|^2 d\xi\\
\end{align*}

Recalling how $t_0$ was chosen in Corollary \ref{ssvcor},
\begin{align*}
\int_{L^{-m}}^1 |P_1(\xi)|^2\,&|P_2'(\xi)|^2|P_2''(\xi)|^2\,d\xi\\
&\gtrsim L^{-2C_1m}\psi(m)^2 KL^{-n} \\
&\gtrsim KL^{-n}N^{-\alpha\epsilon_0}\\
\end{align*}
for some $\alpha>0$. The last inequality is true by the choice of $m$ and by the SSV or log-SSV property of $\phi'$; in either case, $L^{-C_1m}\geq\psi(m)\gtrsim N^{-\varepsilon m/2}$ for some $\varepsilon>0$. 
This proves Proposition \ref{gamma want}.

\hfill$\square$

\medskip
\noindent\textbf{Remark.}
It is clear from the proof of Proposition \ref{gamma want} that the condition (\ref{e-add2}) cannot be relaxed, as it is just strong enough to ensure that the integral in (\ref{e-add3}) dominates (\ref{poisson}). In particular, $|\Gamma|\geq C_2 KL^{-Cm}$ for some possibly large constant $C$ would  \textbf{not} suffice, by way of contrast to \eqref{e-add1} and many other estimates in this paper where multiplicative constants in the exponent make little difference. It is this dependence which leads us to investigate the cyclotomic divisibility problems of Section \ref{small ab}. Further, it seems likely that an improvement on Section \ref{small ab} can extend Theorem \ref{main thm} to hold for many (or perhaps all) finite $A,B\subset\nn$.


\section{The SSV property}\label{SSVsect}

Recall from Definition \ref{SSV prop} that the Set of Small Values (SSV) of a function $\varphi:\rr\to\cc$ was defined as
$$
SSV_\psi:=\{\xi\in[0,1]:|\prod_{k=1}^m\varphi(L^k\xi)|\lesssim\psi(m)\}.
$$
The SSV property with SSV function $\psi$ asserts that $SSV_\psi$ can be covered by $L^{c_2m}$ intervals of size $L^{-c_3m}$, with $c_2<c_3$. When $\psi(m)=L^{-c_1m}$, $\psi(m)=L^{-c_1m\log m}$ or $\psi(m)=L^{-c_1m^2}$, we refer to this as the SSV property, the log-SSV property, or the square-SSV property, respectively.

The self-similar sets considered in \cite{BV1}, \cite{LZ}, and \cite{NPV} all induced functions $\phi_\theta$ having the SSV property, with no ``bad'' factors $\phi''_\theta$ whatsoever, courtesy of their good ``tiling" behaviour. In particular, the property holds for $L=3$ \cite{BV1}, and for the product set case if there exist $r_1,r_2\in\nn$ such that $r_1A+r_2B$ tiles $\zz$ \cite{LZ}. We will see in Section \ref{SSV4} that it holds for general self-similar sets (not necessarily product sets) with $L=4$.

By way of contrast, in \cite{BV3} {the square-SSV property} was used. In fact, nothing better holds in any suitable generality - see Section \ref{ssv fail}. As such, our $\Gamma$ construction avoids a legitimate obstacle, as we asserted earlier. We will now discuss when such SSV properties do and do not hold.

\subsection{The SSV property holds for $L=3,4$}\label{SSV4}

In the case $L=3$, the SSV property can be seen to hold for all angles. The key observation is that in this case, one may write $3\cdot\phi_t(x)=1+e^{ia\xi}+e^{ibt\xi}$ and observe $\phi_t(\xi)=0$ if and only if the three summands are the complete set of third roots of unity. In particular, $\phi_t(3\xi)=1$ whenever $\phi_t(\xi)=0$, leading to complete separation of the small values of $\phi_t(3^k\cdot),\phi_t(3^{k'}\cdot)$ when $k\neq k'$. \cite{BV1} and \cite{BV3} fill in some details, but elementary methods suffice to derive the SSV property using this observation.

We now prove that the SSV property holds for any fixed set of $4$ non-collinear points. There are similarities to the $L=3$ case discussed above, as will be readily appreciated by examining the below together with \cite{BV1}, \cite{BV3}. In the case $L=4$, the only way for $\phi_t(x)=0$ to occur is for the four terms to form two annihilating pairs. We turn to this fact now and prove the SSV property.

When $L=4$, some normalizations are possible. In fact, three out of four of the similarity centers $z_j$ can be mapped to arbitrary points by an affine map, leaving only one truly free parameter $z_4=r_4e^{i\theta_4}$. Without loss of generality, then, $z_1=0,z_2=1,z_3=i$. Note that
\begin{align*}
\phi_\theta(\xi)&=\frac1{4}\sum_{j=1}^4 e^{ir_j\cos(\theta_j-\theta)\xi}\\
&=\frac1{4}\big[1+e^{i\cos(\theta)\xi}+e^{i\cos(\theta)\tan(\theta)\xi}+e^{ir_4\cos(\theta)[\cos(\theta_4)+\tan(\theta)\sin(\theta_4)]\xi}\big]\\
\end{align*}
By a change of variable $\tan(\theta)\to t$, $\cos(\theta)\xi\to\xi$, we can write
$$
\phi_t (\xi):=\frac14 (1+e^{i\xi} + e^{it\xi} + e^{ig(t)\xi})\,,
$$
where $t\in [-1,1]$ and $g(t)=t\sin(\theta_4)+\cos(\theta_4)$; of course other $\theta$ are handled by symmetry.
So for this $\phi_t=\phi$, we consider
$$
\prod_{k=0}^{m} \phi(4^k\xi)\,.
$$

It will be convenient to argue with trigonometric identities. To do so, we rewrite the observation about annihilating pairs in the form of the estimate
$$
|e^{ix_1}+e^{ix_2}+ e^{ix_3} +e^{ix_4}| \gtrsim \min_{j\neq k;j,k=1,2,3,4} |\cos(\frac{x_j-x_k}{2})|,
$$
where the left-hand side nearly vanishes when the minimum on the right-hand is nearly zero and is nearly attained by two disjoint pairs $(j,k).$ Thus we can easily write the following ``pseudofactorization''

\begin{equation}\label{pseudofac}
|e^{ix_1}+e^{ix_2}+e^{ix_3}+e^{ix_4}|\gtrsim\prod_{1\leq j<k\leq 4}\bigr|\cos\bigr(\frac{x_j-x_k}{2}\bigr)\bigr|
\end{equation}

For us, $x_1=0,x_2=\xi,x_3=t\xi,x_4=g(t)\xi$. Letting $\alpha\xi=\frac1{2}(x_j-x_k)$, it is enough to prove this claim then.
\medskip

\noindent\textbf{Claim:}
The function $\psi(\xi)=\cos(\alpha\xi)$ has the SSV property (with $L=4$) for any $\alpha\in\rr$. One may take $c_3/c_2$ arbitrarily large independent of $\alpha$, and the implied constants appearing in the definition of the SSV property can be made uniform in $\alpha$ for fixed range $\alpha\in[0,\hat{\alpha}]$.

\medskip
\noindent\textbf{Proof of claim:}
%
%
Consider the product of trigonometric functions. Repeating the double angle formula, one gets
$$2^m\sin(x)\cdot\prod_{k=0}^{m-1}\cos(2^kx)=\sin(2^mx)$$
Using the substitution $m\to 2m+1$,
$$2\cdot 4^m\sin(x)\cdot\prod_{k=0}^{2m}\cos(2^kx)=\sin(2\cdot 4^mx)$$
Omitting even terms and reindexing,
%
$$
\bigr|\prod_{k=0}^{m}\cos(4^kx)\bigr|\gtrsim 4^{-m}\bigr|\frac{\sin(2\cdot 4^mx)}{\sin(x)}\bigr|
$$
%
%
Now let $x=\alpha\xi$, where $\xi\in[0,1]$. Then
$$\bigr|\prod_{k=0}^{m}\cos(4^k\alpha\xi)\bigr|\gtrsim 4^{-m}\bigr|\frac{\sin(2\cdot 4^m\alpha\xi)}{\sin(\alpha\xi)}\bigr|$$

But the small values of the right hand side are readily understood. Such $\xi\in\rr$ are contained in this set:
$$(-c^m4^{-m}\alpha^{-1},c^m4^{-m}\alpha^{-1}) + \pi\big[2^{-1}4^{-m}\alpha^{-1}\zz\setminus \alpha^{-1}\zz\big]$$
These intervals can be large for small $\alpha$, but in such a case they do not intersect $[0,1]$ anyway if $c$ is small enough. Otherwise, they are exponentially small, and of the appropriate number. Further, $c_3/c_2$ can be made arbitrarily large by making $c$ small and $c_1$ large. Note that for $\alpha\gtrsim 4^{-m}$, the size of the intervals of small values scales with $\alpha^{-1}$ and their number scales with $\alpha$, and this is the only dependence on $\alpha$. This proves the claim.

$\hfill\square$


\subsection{The SSV property can fail for $L=5$}\label{ssv fail}

Consider the following examples:
\begin{itemize}
\item The product set case with $A=B=\{0,3,4,8,9\}$.
\item The self-similar set with $L=5$ and $z_1=0-i/24$, $z_2= 3/24 +i/24$,
$z_3 = 4/24 -i/24$, $z_4= 8/24 +i/24$, $z_5= 9/24-i/24$. (The imaginary coordinates do not matter in this example other than to avoid collinearity.)
\end{itemize}

Let us rescale slightly from the usual convention so that $\phi_A$ is 1-periodic:
$$
\phi_0(\xi) = \frac15 (1+e^{i\frac{\pi}{4}\xi} +e^{i\frac{\pi}{3}\xi}  + e^{i\frac{2\pi}{3}\xi}  +e^{i\frac{3\pi}{4}\xi})\,.
$$
In the first (product set) example, we have $\phi_A=\phi_B=\phi_0$; in the second example, $\phi_0$ is the trigonometric polynomial corresponding to $\theta=0$. Therefore the calculation below will show the failure of the SSV property for both examples, for all angles in the first case and for $\theta=0$ in the second case. In fact, the square-SSV property is sharp here.

Now let us restrict to the case of the second example; the first example is essentially the same, but one must use $L=25$ instead of $5$.

The reason for the SSV failure is that $\phi_0(5^k \xi)$ has a recurring zero at $\xi=1$:
$\phi_0(1)=\phi_0(5^k) =0, k=1,....$. Therefore, for all
$\xi\in [1- 5^{-200\sqrt{m}},1]$ and $k=0,1,...,\sqrt{m}$ we have
$$
|\phi_0(5^k\xi)|=|\phi_0(5^k\xi)-\phi_0(5^k)|\le C\,5^k|\xi-1|\le C\,5^k\,5^{-200\sqrt{m}}\,.
$$
Let $\Phi_0(\xi) = \prod_{k=0}^m \phi_0(5^k\xi)$, then
\begin{align*}
|\Phi_0(\xi)|&\le |\prod_{k=0}^{\sqrt{m}}\phi_0(5^k\xi)||\prod_{k=\sqrt{m}+1}^m...|\le  |\prod_{k=0}^{\sqrt{m}}\phi_0(5^k\xi)|\\
&\leq C^{\sqrt{m}} \,5^{1+2+\dots + \sqrt{m}} \,(5^{-200\sqrt{m}})^{\sqrt{m}} \le 5^{-100\,m}\,.
\end{align*}

Hence the set of small values includes the entire interval $[1- 5^{-200\sqrt{m}},1]$; in particular, it cannot be covered by $5^{c_2m}$ intervals of length at most $5^{-c_3m}$, $0<c_2<c_3$.

In the second (non-product set) example, the existence of one ``bad" direction $\theta=0$ does not automatically make the SSV approach unviable. In fact, by reading \cite{BV3} carefully one can confirm that if  $\phi_\theta$ satisfies the uniform SSV property 
for all directions $\theta$ except for an exceptional set $\Theta_m$ of size $|\Theta_m|\lesssim e^{-c_4m}$, then we can still get $Fav(S_n)\lesssim n^{-p}$ for some $p>0$. 
However, an additional short calculation shows that for the above example,
the SSV property continues to fail on a set of angles $\Theta_m$ of size $\gtrsim 5^{-c\sqrt{m}}$, which again is far too large. Indeed, for $\theta\in[0,5^{-200\sqrt{m}}]$ we have
$$
|\Phi_{\theta}(\xi)|\le |\prod_{k=0}^{\sqrt{m}}\phi_{\theta}(5^k\xi)|\le  \prod_{k=0}^{\sqrt{m}}(|\phi_{\theta}(5^k\xi)-\phi_{0}(5^k\xi)|+ |\phi_{0}(5^k\xi)|)\,.
$$
The second term in each factor is at most $C\,5^k\,5^{-200\sqrt{m}}$. The first term can be estimated by differentiating in $\theta$ and using the mean value theorem:
$$
|\phi_{\theta}(5^k\xi)-\phi_{0}(5^k\xi)| \le C\, 5^k\, |\theta| \le C\,5^k\,5^{-200\sqrt{m}}\,.
$$
Hence each factor is at most $C\,5^k\,5^{-200\sqrt{m}}$, so that
$$
|\Phi_{\theta}(\xi)|\le C^{\sqrt{m}} \,5^{1+2+\dots + \sqrt{m}} \,(5^{-200\sqrt{m}})^{\sqrt{m}} \le 5^{-100\,m}\,.
$$


\subsection{SSV properties for product sets}\label{section-SSVproduct}

We now return to our study of rational product sets. In this case, $\phi(\xi)=\phi_A(\xi)\cdot\phi_B(t\xi)$, where $\phi_A(\xi)=\frac{1}{|A|}A(e^{2\pi i\xi})$, $A(x):=\sum_{a\in A}x^a$ is a polynomial in $\zz[x]$, and similarly for $B$. We will only consider the SSV properties of $\phi_A$, the case of $\phi_B$ being identical up to the mild rescaling in the SSV definition.

Throughout this subsection, we will refer to the factorization of $A(x)$ given in Definition \ref{A1234}. Clearly, if $\varphi_1,\varphi_2$ have the SSV property, then so does $\varphi_1\cdot\varphi_2$. It follows that we may consider each $A^{(i)}$ separately, and moreover we may split up each $A^{(i)}$ into its own factors as needed.

It was proved in \cite{BV3} that \textbf{any}  exponential polynomial $\varphi(\xi)=\frac1{L}\sum_{j=1}^L e^{2\pi i\lambda_j\xi}$ has at least the square-SSV property, even if the $\lambda_j\in\rr$ are completely arbitrary. However, our proof of Theorem \ref{main thm} requires that the ``good" factor $\phi'_A$ have at least the log-SSV property, and we now turn to this.

Our results regarding the SSV status of the functions $\phi^{(i)}_A(\xi)=\frac{1}{|A|}A^{(i)}(e^{2\pi i\xi})$ are as follows. 

\medskip

1) We prove in Proposition \ref{goodcyc} that $\Phi_s(e^{2\pi i\cdot})$ has the SSV property for $(s,L)\neq 1$. (Equivalently, $\varphi(\xi)=e^{2\pi i\xi}-\zeta$ has the SSV property when $\zeta$ is a root of $\Phi_s$ and $(s,L)\neq 1$.) Therefore $\phi_A^{(1)}$, as a product of such factors, has the SSV property.

\medskip

2) The factor $\phi_A^{(2)}$ has the square-SSV property as discussed above, and \textbf{this is sharp} unless $\phi_A^{(2)}\equiv 1$. Indeed, it is easy to see that the calculation in Section \ref{ssv fail} extends to any $\Phi_s(e^{2\pi i\cdot})$ with $(s,L)=1$.

\medskip

3) Proposition \ref{liouville} shows that the log-SSV property holds for factors of the form $\varphi(\xi)=e^{2\pi i\xi}-e^{2\pi i\xi_0}$, where $\xi_0\in[0,1]\setminus\qq$ and $e^{2\pi i\xi_0}$ is algebraic. This is a consequence of a variant of Baker's Theorem in diophantine number theory. It follows that $\phi_A^{(3)}$ has at least the log-SSV property. We do not know whether this can be improved.

\medskip

4) Clearly, $\varphi(\xi)=e^{2\pi i\xi}-z_0$ has the SSV property whenever $|z_0|\neq 1$.

\medskip

Collecting the cases 1), 3) and 4), we get the following.

\begin{proposition}\label{ssv factors}
The ``good" factor $\phi'_A$ has the log-SSV property. Furthermore, if $A^{(3)}(x)\equiv 1$ (i.e. $A(x)$ has no roots $e^{2\pi i\xi_0}$ with $\xi_0\in\rr\setminus\qq$), then $\phi'_A$ has the SSV property. Furthermore, we can arrange for $c_3/c_2$ to be greater than any given $M>0$, at the cost of increasing $c_1$.
\end{proposition}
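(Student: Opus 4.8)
The plan is to assemble the proposition from the four cases 1)--4) listed just above, since the only real content there is the observation that the SSV (resp. log-SSV) property is preserved under products, together with the individual statements for each type of factor. First I would record the product lemma: if $\varphi_1,\varphi_2$ have the SSV property with constant triples $(c_1^{(i)},c_2^{(i)},c_3^{(i)})$, then $\varphi_1\varphi_2$ has the SSV property, because $\left|\prod_k \varphi_1(L^k\xi)\varphi_2(L^k\xi)\right|\le\psi(m)$ forces one of the two factors to be $\le\psi(m)^{1/2}$, so $SSV^{(1\cdot2)}_{\psi}\subset SSV^{(1)}_{\psi^{1/2}}\cup SSV^{(2)}_{\psi^{1/2}}$; replacing $c_1$ by $c_1/2$ and taking the union of the two covers (a cover by $L^{c_2^{(1)}m}+L^{c_2^{(2)}m}\le L^{(\max c_2^{(i)}+1)m}$ intervals of size $L^{-\min c_3^{(i)}m}$) does the job, and the identical statement holds for the log-SSV function $\psi(m)=L^{-c_1 m\log m}$ since $\psi(m)^{1/2}$ is of the same form with $c_1$ halved. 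The same remark lets me subdivide any factor of $A(x)$ further as convenient.

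Next I would invoke the per-factor results. By Definition \ref{A1234}, $A'(x)=A^{(1)}(x)A^{(3)}(x)A^{(4)}(x)$, so $\phi_A'$ is (a constant multiple of) a product of factors of three types: $\Phi_s(e^{2\pi i\xi})$ with $(s,L)\ne1$, which has the SSV property by Proposition \ref{goodcyc} (case 1); $e^{2\pi i\xi}-e^{2\pi i\xi_0}$ with $\xi_0\in[0,1]\setminus\qq$ and $e^{2\pi i\xi_0}$ algebraic, which has the log-SSV property by Proposition \ref{liouville} (case 3); and $e^{2\pi i\xi}-z_0$ with $|z_0|\ne1$, which has the SSV property (case 4, an elementary estimate: $|e^{2\pi i\xi}-z_0|$ is bounded below by $\big||z_0|-1\big|>0$ for \emph{all} $\xi$, so $SSV_\psi=\emptyset$ once $\psi(m)$ is small, hence it is trivially covered). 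Since the log-SSV property is weaker than the SSV property, applying the product lemma finitely many times to the factors of $\phi_A'$ yields that $\phi_A'$ has at least the log-SSV property; and if $A^{(3)}\equiv1$ then only types 1 and 4 occur, all of which have the genuine SSV property, so $\phi_A'$ has the SSV property.

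Finally, for the statement about $c_3/c_2$: each of Propositions \ref{goodcyc} and \ref{liouville}, as well as the trivial case 4, allows the ratio $c_3/c_2$ to be made as large as desired by enlarging $c_1$ (this is asserted in the Claim in Section \ref{SSV4} and will be built into the statements of those propositions). In the product lemma the new ratio is $\min_i c_3^{(i)} m$ over $(\max_i c_2^{(i)}+1)m$, i.e.\ $\min_i c_3^{(i)}/(\max_i c_2^{(i)}+1)$; so given a target $M$, I first arrange each factor to have ratio at least, say, $2M+3$ with $c_2^{(i)}\ge1$, after which the product's ratio is at least $(2M+3)/(\max_i c_2^{(i)}+1)$ — and since there are only finitely many factors and we may further subdivide to equalize the $c_2^{(i)}$, a bounded number of such steps keeps the ratio above $M$ while only increasing $c_1$ by a bounded factor. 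I expect the only mildly delicate point to be bookkeeping the constants through the (finitely many) product steps so that the final $c_3/c_2$ still exceeds the prescribed $M$; there is no analytic obstacle, since everything rests on Propositions \ref{goodcyc} and \ref{liouville}, which are proved separately.
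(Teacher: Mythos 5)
Your proposal is correct and follows essentially the same route as the paper: the proposition is obtained by combining closure of the (log-)SSV property under products with the per-factor statements — Proposition \ref{goodcyc} for the $A^{(1)}$ factors, Proposition \ref{liouville} for $A^{(3)}$, and the trivial lower bound for roots off the unit circle — exactly as in cases 1), 3), 4) of Section \ref{section-SSVproduct}. The only caveat is in your $c_3/c_2$ bookkeeping: a large ratio for each factor separately does not by itself give a large ratio for the product (one needs $\min_i c_3^{(i)}$ large compared to $\max_i c_2^{(i)}$), but since the finitely many factors have bounded $c_2^{(i)}$ and each $c_3^{(i)}$ can be increased at will by enlarging $c_1^{(i)}$, this is easily repaired and is treated with the same informality in the paper itself.
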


We now turn to the proofs of 1) and 3). We will focus on the SSV property itself, but it will be clear from the proofs that $c_1,c_2,c_3$ may be chosen as claimed in the proposition.
Recall that the \textbf{cyclotomic polynomials} $\Phi_s(x)$, $s\in\nn$, are defined as
\begin{equation}\label{cyclo}
\Phi_s(x):=\prod_{d:1\leq d\leq s,(d,s)=1}(x-e^{2\pi id/s}).
\end{equation}
Alternatively, $\Phi_s$ are uniquely determined as the irreducible factors of $x^{M}-1$ in $\zz[x]$:
$$
x^M-1=\prod_{d|M}\Phi_d(x).
$$

\begin{proposition}\label{goodcyc}
Let $s$ and $L$ have a common divisor. Then $\Phi_s(e^{2\pi i\cdot})$ has the SSV property; equivalently, $\varphi(\xi)=e^{2\pi i\xi}-e^{2\pi ik/s}$ has the SSV property for all $(k,s)=1$.
\end{proposition}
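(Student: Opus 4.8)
\textbf{Proof proposal for Proposition \ref{goodcyc}.}

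The plan is to reduce the problem to a single "annihilation" observation about the recurrence of zeros of $\varphi$ under the dilation $\xi\mapsto L\xi$, in the spirit of the $L=3$ discussion in Section \ref{SSV4}, but now exploiting the arithmetic of cyclotomic polynomials. Fix $s$ with $d:=(s,L)>1$ and let $\zeta=e^{2\pi i k/s}$ with $(k,s)=1$, so $\varphi(\xi)=e^{2\pi i\xi}-\zeta$ vanishes exactly when $\xi\equiv k/s\pmod 1$. The key point is that the zero set of $\varphi(L^j\cdot)$ is $L^{-j}(k/s+\zz)$, i.e. a union of arithmetic progressions with spacing $L^{-j}$, and since $d\mid L$ and $d\mid s$, the zero sets at different scales $j$ are "almost disjoint modulo the common divisor": writing $s=d s'$, $L=d L_1$, one checks that a point within distance roughly $L^{-c_3 m}$ of a zero of $\varphi(L^{j}\cdot)$ is forced to stay a definite distance $\gtrsim 1/s$ away from the zeros of $\varphi(L^{j'}\cdot)$ for $j'$ in a long range, unless $j'$ is close to $j$. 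Concretely, I would first record the elementary estimate $|\varphi(\xi)|\asymp \mathrm{dist}(\xi, k/s+\zz)$ uniformly (with constants depending only on $s$), so that $|\varphi(L^j\xi)|$ is small precisely when $L^j\xi$ is close to $k/s$ mod $1$.

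Next I would analyze the set where $\prod_{k=1}^m|\varphi(L^k\xi)|\le L^{-c_1 m}$. Since each factor is at most $O(1)$, for the product to be this small at least a constant fraction $c m$ of the factors $|\varphi(L^j\xi)|$ must individually be below $L^{-c_1 m/(2m)}\cdot(\text{const})$... more precisely, at least one factor must be $\le L^{-c_1 m/m}=L^{-c_1}$, which is not yet strong enough — so instead I would argue by a pigeonhole/covering count: the set $\{\xi\in[0,1]: |\varphi(L^j\xi)|\le \delta\}$ is a union of $\le L^j$ intervals each of length $\asymp \delta L^{-j}/s$. The product being $\le L^{-c_1 m}$ forces $\xi$ into the set where the \emph{geometric mean} of the $|\varphi(L^j\xi)|$ over $j=1,\dots,m$ is $\le L^{-c_1}$; I would bound the measure and covering number of this set by a union bound over which subsets of indices contribute the smallness, using the scale-separation to control overlaps. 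The cleanest route: show directly that if $\xi$ lies in the SSV set then there is some scale $j_0\le m$ with $|\varphi(L^{j_0}\xi)|\le L^{-c_1 m/m}$... — here the real mechanism is that the dilation structure is \emph{eventually periodic} modulo $d$: because $d\mid L$, the residue of $L^j \xi$ mod $1/d$ stabilizes, and the condition $L^j\xi\approx k/s\pmod 1$ with $(k,s)=1$, $d\mid s$, pins down $\xi$ mod $1/d$ to be near a specific value, and then forces a nested interval structure so that the SSV set lives in $\le L^{c_2 m}$ intervals of length $\le L^{-c_3 m}$ with $c_3/c_2$ as large as we like once $c_1$ is large.

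I expect the main obstacle to be making the "scale separation" quantitative in exactly the form Definition \ref{SSV prop} demands, namely obtaining $L^{c_2 m}$ intervals of size $L^{-c_3 m}$ with a \emph{prescribed large ratio} $c_3/c_2$, uniformly, rather than merely showing the SSV set is small. The efficient way to do this is: fix a large integer $M$ (the target ratio), choose $c_1$ large depending on $M$; show that if $\prod_{k=1}^m |\varphi(L^k\xi)|\le L^{-c_1 m}$ then among the first $Mc_2 m$ scales (say) a full block of $\gtrsim c_1 m/\log L$ factors are each $\le L^{-1}$, hence $L^k\xi$ is within $L^{-k}\cdot s^{-1}$ of $k/s+\zz$ for each such $k$; the first such constraint (smallest $k$) already confines $\xi$ to $\le L^{k}$ intervals of length $L^{-k}/s$, and because $(s/d)$ is coprime to $L$, the \emph{next} constraint at a scale $k' > k$ with $k'-k$ large (which exists, since the bad block is long and scales where $L^{k}\xi\approx k/s$ recur only when $k\equiv k'$ modulo the multiplicative order of $L/d$... ) refines each interval by a further factor $L^{-(k'-k)}$, and iterating gives the nested structure with the desired exponents. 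I would present the cyclotomic translation at the end: $\Phi_s(e^{2\pi i\xi})=\prod_{(k,s)=1}(e^{2\pi i\xi}-e^{2\pi i k/s})$, so the SSV property for each linear factor $e^{2\pi i\xi}-e^{2\pi i k/s}$, together with the fact that a product of functions with the SSV property has the SSV property (with $c_3/c_2$ preserved), yields the claim for $\Phi_s(e^{2\pi i\cdot})$.
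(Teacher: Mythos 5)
Your core mechanism is sound, and it is genuinely different from the paper's argument. The paper proceeds algebraically: writing $s=ML_1$ with $L_1\mid L^a$ and $(M,L)=1$, it builds a polynomial $F$ with $\Phi_s(x)\mid F(x^M)$ together with a complementary factor $G$ satisfying $F(x)G(x)=(x^{L^a}-1)/(x-1)$, so that $\prod_j F(x^{L^j})G(x^{L^j})$ telescopes into quotients of the form $(x^{L^{am+b}}-1)/(x^{L^b}-1)$, whose small values are transparent; the unwanted factors $G$, $H$ are then divided out since the corresponding products are bounded. Your route instead exploits the orbit structure directly: since $(k,s)=1$ and $(s,L)>1$, one has $s\nmid L^{r}-1$ for every $r\ge 1$, hence $\|(L^{r}-1)k/s\|\ge 1/s$, so a zero of $\varphi(L^{j}\cdot)$ can never approximately recur at a later scale until the accumulated dilation has magnified the error past $\approx 1/s$. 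That observation (which you should verify in one line rather than leave as ``one checks'') is exactly the elementary mechanism the paper alludes to for $L=3$, and it does suffice for an alternative proof.

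However, the bookkeeping you propose in your final paragraph contains a step that fails. The hypothesis $\prod_{j=1}^m|\varphi(L^j\xi)|\le L^{-c_1m}$ does \emph{not} force ``a full block of $\gtrsim c_1m/\log L$ factors each $\le L^{-1}$'': all of the smallness can be carried by a single factor (take $\xi$ within $L^{-c_1m}$ of a zero of $\varphi(L\cdot)$), and in fact your own separation estimate works \emph{against} consecutive very small scales rather than producing them. Moreover $|\varphi(L^{j}\xi)|\le L^{-1}$ only places $L^{j}\xi$ within $O(L^{-1})$ of $k/s+\zz$, not within $L^{-j}s^{-1}$, and the remarks about the residue of $L^j\xi$ mod $1/d$ ``stabilizing'' and about the multiplicative order of $L/d$ are not correct as stated and are not what drives the proof (exact recurrence never occurs, for any exponent). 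The standard repair is the ``last small scale'' argument: let $j_1<\dots<j_r\le m$ be the scales with $\dist(L^{j}\xi,\,k/s+\zz)<c/s$; your separation estimate gives $\dist(L^{j_i}\xi,\,k/s+\zz)\gtrsim s^{-1}L^{-(j_{i+1}-j_i)}$ for $i<r$, so the product of all factors except the one at $j_r$ is at least $(c'/s)^{m}L^{-m}$, whence $\dist(L^{j_r}\xi,\,k/s+\zz)\le L^{-(c_1-C)m}$ with $C=C(s,L)$. Thus every point of $SSV_\psi$ lies within $L^{-(c_1-C)m}$ of one of at most $\sum_{j\le m}(sL^{j}+1)\lesssim msL^{m}$ lattice points $L^{-j}(k/s+\zz)$, giving a cover by at most $L^{2m}$ intervals of length $\lesssim L^{-(c_1-C-1)m}$, so $c_2\le 2$, $c_3\approx c_1$, and $c_3/c_2$ can be made as large as desired by enlarging $c_1$, exactly as the proposition requires. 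With that replacement your argument closes; the final reduction from $\Phi_s$ to its linear factors (a product of SSV functions is SSV) is fine and matches the paper.
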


\noindent\textbf{Proof.}
The main idea is that $\Phi_s(x)$, $\Phi_s(x^{L^k})$ have no common zeroes, since $(e^{2\pi ij/s})^{L^k}$ is never a primitive $s$-th root of unity by the assumption $(s,L)\neq 1$. The lack of repeated zeroes in the self-similar product is favorable to the SSV condition, but we apply telescoping products to see it directly. 

Let $s=ML_1$, where $L_1|L^a$ for some $a\in\nn$ and $(M,L)=1$.
Let also
$$F(x)=\prod_{k\in\kappa} (x-e^{2\pi ik/{L^a}}),$$
where $\kappa\subset [1,L^a-1]$ is chosen so that $e^{2\pi ik/{L^a}}$ runs through all primitive $L_1$-th roots of unity. The important thing to note is that $\Phi_s(x)|F(x^M)$.

Now consider the ``cyclotomic complement'' of $F$,
$$G(x)=\prod_{k=1,k\not\in\kappa}^{L^a-1}(x-e^{2\pi ik/L^a})$$
It follows that $F(x)\cdot G(x)=\frac{x^{L^a}-1}{x-1}$. Then for $b=0,1,...,a-1$,
$$\prod_{j=0}^{m-1}F(x^{L^{aj+b}})G(x^{L^{aj+b}})=\frac{x^{L^{am+b}}-1}{x^{L^b}-1},$$
so
$$\prod_{j=0}^{am-1}F(x^{L^j})G(x^{L^j})=\prod_{b=0}^{a-1}\frac{x^{L^{am+b}}-1}{x^{L^b}-1}$$

By direct exmination of the small values of the above expression, one can see that $FG(e^{2\pi i\cdot})$ has the SSV property. $1/G(e^{2\pi i\cdot})$ is bounded below and thus has the SSV property, so $F(e^{2\pi i\cdot})=FG/G(e^{2\pi i\cdot})$ has the SSV property.

Hence $F(e^{2\pi iM\cdot})$ has the SSV property by a change of variable. $\Phi_s(x)|F(x^M)$, so dealing with the $H$ in $\Phi_s(x)=F(x^M)/H(x)$ in the same way as with $G$, we finish the proof.

\hfill$\square$

\begin{proposition}\label{liouville}
If $\xi_0\in[0,1]\setminus\qq$ and $e^{2\pi i\xi_0}$ is algebraic, then $\varphi(\xi)=e^{2\pi i\xi}-e^{2\pi i\xi_0}$ has the log-SSV property.
\end{proposition}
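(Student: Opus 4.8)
The plan is to estimate $\prod_{k=1}^m|\varphi(L^k\xi)| = \prod_{k=1}^m|e^{2\pi i L^k\xi}-e^{2\pi i\xi_0}|$ from below on all but a small exceptional set, using the fact that $\xi_0$ is a badly approximable number in a quantitative (Baker-type) sense. First I would reduce the product to a sum of logarithms: $\log\prod_{k=1}^m|\varphi(L^k\xi)| = \sum_{k=1}^m \log|e^{2\pi i L^k\xi}-e^{2\pi i\xi_0}|$, and observe that a single factor $|e^{2\pi i L^k\xi}-e^{2\pi i\xi_0}|$ is comparable to the distance $\|L^k\xi - \xi_0\|$ (distance to the nearest integer translate). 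So the SSV set consists of those $\xi$ for which many of the orbit points $L^k\xi$ come very close to $\xi_0$ mod $1$. A factor can be small only when $L^k\xi$ is within, say, $\delta$ of $\xi_0+\zz$; for each fixed $k$ the set of such $\xi\in[0,1]$ is a union of $L^k$ intervals of length $2\delta/L^k$ (roughly $L^k\cdot 2\delta L^{-k} = 2\delta$ in total measure, independent of $k$).

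The key Diophantine input — which is where Theorem 9.1 of \cite{wald} (a variant of Baker's theorem on linear forms in logarithms) enters — is a lower bound of the form $\|q\xi_0 - p\| \geq q^{-C_0}$, or more precisely $|e^{2\pi i q\xi_0} - e^{2\pi i p/q'}| \gtrsim q^{-C_0}$ type estimates, valid for all rationals with denominator $q$, with $C_0$ depending only on $\xi_0$ (its height and degree). Concretely, if $L^k\xi$ is close to $\xi_0 \bmod 1$ and simultaneously $L^{k'}\xi$ is close to $\xi_0 \bmod 1$ for $k' < k$, then $L^{k-k'}(L^{k'}\xi) \approx L^{k'}\xi$ forces a near-rational relation; subtracting, $\xi_0(L^{k-k'}-1) \approx$ integer up to an error controlled by the two small distances, and Baker's theorem forbids this unless the distances are not too small relative to $L^k$. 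This is exactly the mechanism that produces \emph{separation of scales}: the orbit cannot return to an $L^{-Cj}$-neighbourhood of $\xi_0$ at $j$ consecutive (or $j$ well-separated) times, because that would contradict the transcendence measure of $\xi_0$. Quantitatively, I expect to get that if $|\varphi(L^k\xi)| \leq L^{-\rho m}$ for $c_1 m$ values of $k$ with $\rho$ large, then pairing up two such near-returns at scales $k, k'$ yields $\|L^{k-k'}\xi_0 - \text{integer}\| \lesssim L^{-\rho m}$, contradicting $\gtrsim (L^{k-k'})^{-C_0} \geq L^{-C_0 m}$ once $\rho > C_0$.

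The combinatorial bookkeeping then goes as follows: suppose $\xi$ lies in $SSV_\psi$ with $\psi(m) = L^{-c_1 m\log m}$. Then $\sum_{k=1}^m \log|e^{2\pi iL^k\xi}-e^{2\pi i\xi_0}| \leq -c_1 m\log m\cdot\log L$. Since each term is $\leq O(1)$ and a term of size $\leq -j$ requires $L^k\xi$ within $L^{-j}$ of $\xi_0\bmod 1$, a counting argument (for each dyadic depth $j$, at most $O(C_0)$ of the indices $k$ can have depth in $[j, 2j]$, by the scale-separation from Baker) shows the total of $\sum_k \log|\cdots|$ over $m$ indices cannot drop below $-C m\log m \cdot \log L$ \emph{unless} $\xi$ itself lies within $L^{-cm\log m}$ of one of a controlled number of "bad" points — namely points $\xi$ for which $L^{k}\xi$ is trapped near $\xi_0$ for a long run of consecutive $k$, and such $\xi$ form $L^{c_2 m}$ intervals of length $L^{-c_3 m}$ with $c_3/c_2$ as large as desired by taking $c_1$ large. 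I would organize this by letting $E_j = \{\xi\in[0,1]: \|L^j\xi - \xi_0\| \leq \psi(m)^{1/(c_1 m)}\}$ and showing $SSV_\psi$ is covered by intersections $\bigcap_{j\in J} E_j$ over index sets $J$ of size $\gtrsim m$, each such intersection being tiny by the Diophantine estimate applied to differences $L^{j}-L^{j'}$, $j,j'\in J$.

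The main obstacle I anticipate is extracting the \emph{uniform} (in the relevant range) and \emph{effective} exponent $C_0$ from the transcendence-measure theorem and then correctly propagating the logarithmic loss: one must check that the number of indices $k$ that contribute a large negative $\log$ at a given scale is bounded by a constant, so that getting the product below $L^{-c_1 m\log m}$ genuinely requires $\xi$ near a bad point rather than merely a diffuse set of near-returns — in other words, that the $\log m$ factor in $\psi$, and no worse, is exactly what the pigeonholing over $m$ scales costs. Getting a cleaner estimate (the SSV property rather than log-SSV) would require a stronger Diophantine bound than the general Baker-type statement supplies, which is why only log-SSV is claimed here.
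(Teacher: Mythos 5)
Your plan follows the same route as the paper's proof: a single factor is comparable to the torus distance $\|L^k\xi-\xi_0\|$; Waldschmidt's Theorem 9.1 yields $|\xi_0-a/q|\geq C_0q^{-\alpha}$, which, applied with $q=L^{\ell-k}-1$, shows that two deep near-returns of the orbit to $\xi_0$ at times $k<\ell$ force the smaller of the two depths to be at most $\alpha(\ell-k)+O(1)$; and the covering by $L^{c_2m}$ intervals of length $L^{-c_3m}$ comes from excising small neighbourhoods of the relevant roots. On the last point the paper's choice is simpler than your ``trapped for a long run of consecutive $k$'' description: one just removes the $L^{-C'm}$-neighbourhoods of all roots $L^{-k}(\xi_0+\zz)$, $k\leq m$ (about $L^{m}$ intervals of length $L^{-C'm}$, so $c_3/c_2$ is as large as desired), and off this cover every depth $d_k$ is automatically at most $C'm$, which is all the counting needs.

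The one step that would fail as written is your bookkeeping claim that at each dyadic depth only $O(C_0)$ indices $k$ can contribute. That is false: for instance, a constant proportion of the times $k\leq m$ can satisfy $\|L^k\xi-\xi_0\|\leq L^{-1}$ on a set of $\xi$ of positive measure, with no Diophantine obstruction at all; and if the constant-per-scale claim were true it would give $\sum_k(\text{depth at }k)\lesssim m$, i.e. the full SSV property, contradicting your own closing remark that only log-SSV should be expected from the general Baker-type input. What your pairwise separation estimate actually gives (and what the paper uses, via $r_k=\max(d_k-k,0)$ and $\min(r_k,r_\ell)\leq\alpha|k-\ell|+C''$) is that indices of depth at least $j$ are pairwise at least $(j-C'')/\alpha$ apart, hence number at most about $\alpha m/j+1$; summing over dyadic scales $j\lesssim C'm$ yields $\sum_k r_k\lesssim \alpha m\log m$, which contradicts the assumption that the product is below $L^{-Cm\log m}$ once $C$ is large enough. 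With that correction (constant-per-scale replaced by the $\alpha m/j$ count, which is exactly where the $\log m$ loss comes from), your outline coincides with the paper's proof.
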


\noindent\textbf{Proof.} We will rely on the following application of 
Theorem 9.1 of \cite{wald}: if $\xi_0$ is irrational and $z_0=e^{2\pi i\xi_0}$ is algebraic, then for any integers $a,q$ with $q> 0$ we have
\begin{equation}\label{pseudoliouville}
\Big|\xi_0-\frac{a}{q}\Big|\geq \frac{C_0}{q^{\alpha}},
\end{equation}
where $C_0>0,$ $\alpha>1$ are positive constants that may depend on $\xi_0$, but are independent of
$a,q$. 

To deduce (\ref{pseudoliouville}) from \cite[Theorem 9.1]{wald}, we proceed as follows. We apply Case (ii) of the theorem with $m=2$, $\lambda_1=2\pi i$, $\lambda_2 = 2\pi i\xi_0$, $b_1=-a$, $b_2=q$, so that $\alpha_1=1$ and $\alpha_2=z_0$. We set $E=e$, $E^*=D$, and choose the positive constants $A_1,A_2$ large enough so that the first inequality in the assumptions of the theorem is satisfied. Note that $D,A_1,A_2$ depend on $\xi_0$, but not on $a$ or $q$. 

We may assume that $|a|\leq 2q$, since otherwise (\ref{pseudoliouville}) holds for the trivial reason that $|\xi_0|\leq 1$ and $|a/q|\geq 2$. 
Let $B= 3C_1q$ for some constant $C_1>0$, then $B\geq C_1(|a|+q)$, so that the last assumption of the theorem holds if $C_1$ was chosen large enough. The theorem then states that the quantity
$$\Lambda=b_1\lambda_1+b_2\lambda_2=2\pi iq\Big(\xi_0-\frac{a}{q}\Big)$$
is bounded from below by $\exp(-C_2\log B)\geq \exp(-C_3\log q)=q^{-C_3}$, where, again, the constants may depend on $\xi_0$ but not on $a$ or $q$. This clearly implies (\ref{pseudoliouville}).

We now proceed with the proof of Proposition \ref{liouville}. Let $\varphi(\xi)=e^{2\pi i\xi}-e^{2\pi i\xi_0}$ and $\varphi_k(\cdot)=\varphi(L^k\cdot)$. Then the set of zeroes of $\varphi_k$ is $L^{-k}\xi_0+L^{-k}\zz$. 

Suppose that $\zeta_0\in[0,1]$ is such that 
\begin{equation}\label{liouville-e2}
\prod_{k=1}^m\varphi_k(\zeta_0)\leq L^{-Cm\log m},
\end{equation}
and is not in a $L^{-C'm}$-neighbourhood of any root of $\varphi_k$ for any $k=1,\dots,m$.
(Here and below, we
identify $[0,1]$ with the torus $\mathbb{T}$, and all neighbourhoods and distances are understood accordingly.)

For each $k=1,\dots,m$, let $\xi_k$ be the root of $\varphi_k$ which is nearest to $\zeta_0$, and let $d_k$ be an integer such that $L^{-d_k}\leq |\zeta_0-\xi_k|< L^{-d_k+1}$. Note that $d_k\leq C'm$. Let also $r_k=\max(d_k-k,0)$. Then
$$
\varphi_k(\zeta_0)\geq cL^{-r_k},
$$
with the constant $c$ uniform in $k$, so that 
$$
\prod_{k=1}^m\varphi_k(\zeta_0)\geq c^m L^{-\sum_{k}r_k}.
$$
Assuming that $C$ is large enough, we get from this and (\ref{liouville-e2}) that
\begin{equation}\label{liouville-e3}
\sum_{k=1}^m r_k \geq Cm(\log m)/2.
\end{equation}

We now use (\ref{pseudoliouville}) to get an upper bound on $\sum_{k=1}^m r_k$. 
Let $1\leq k<\ell\leq m$, then for $\xi_k,\xi_\ell$ as above we have
\begin{align*}
|\xi_k-\xi_\ell|&=
\Big|\frac{\xi_0}{L^{k}}+\frac{a_k}{L^{k}} -\frac{\xi_0}{L^{\ell}}-\frac{a_\ell}{L^{\ell}} \Big|
\\
&=L^{-\ell}|L^{\ell-k}\xi_0+L^{\ell-k}a_k-\xi_0-a_\ell|
\\
&=L^{-\ell}|(L^{\ell-k}-1)\xi_0-a|
\\
&=\frac{L^{\ell-k}-1}{L^{\ell}}\,\,\Big|\xi_0-\frac{a}{L^{\ell-k}-1}\Big|
\\
&\geq\frac{L^{\ell-k}-1}{L^{\ell}}\,\,\frac{C_0}{(L^{\ell-k}-1)^\alpha},
\end{align*}
where we substituted $a=a_\ell-L^{\ell-k}a_k$ and used (\ref{pseudoliouville}). Hence
\begin{equation}\label{liouville-e1}
|\xi_k-\xi_\ell|\geq 2C_0L^{-k}L^{-(\ell-k)\alpha}.
\end{equation}
However, we know that $\xi_k$ and $\xi_\ell$ both lie in an interval centered at $\zeta_0$ of length at most $2\max(L^{-d_k},L^{-d_\ell})=2L^{-\min(d_k,d_\ell)}$. It follows that
$$\min(d_k,d_\ell)\leq k+\alpha(\ell-k)+C''\leq \ell+\alpha(\ell-k)+C'',$$
for some constant $C''$. 

We now drop the assumption that $k<\ell$. Interchanging the two indices if necessary, we get that for all $k\neq \ell$,
$$\min(d_k,d_\ell)\leq \min(k,\ell)+\alpha |k-\ell|+C'',$$
so that
\begin{equation}\label{liouville-e4}
\min(r_k,r_\ell)\leq\alpha |k-\ell| +C''.
\end{equation}

For $j=1,2,\dots,\lfloor\log(C'm)/\log 2\rfloor+1$, let 
$$I_k=\{k:\ 1\leq k\leq m,\ 2^{j-1}\leq r_k<2^{j}\}.$$
Fix $j$, and let $k,\ell\in I_j$, $k\neq \ell$. Then by (\ref{liouville-e4}),
$$
|k-\ell|\geq \frac{\min(r_k,r_\ell)-C''}{\alpha}\geq \frac{2^{j-1}-C''}{\alpha}.
$$
We will call $j$ ``large" if $2^{j-1}\geq 2C''$. Then
$$|k-\ell|\geq 2^{j-2}/\alpha,$$
hence $I_j$ can have at most $m/(2^{j-2}\alpha^{-1})=2^{-j+2}\alpha m$ distinct elements.
The number of remaining ``small" values of $j$ is at most $(\log C'')/(\log 2)+2$. We estimate the size of these $I_j$ trivially by $m$, and we note that for $k\in I_j$ with $j$ small we have $r_k\leq 4C''$. We get that
\begin{align*}
\sum_{k=1}^m r_k&=\sum_{j=1}^{\lfloor\log(C'm)/\log 2\rfloor+1}\sum_{k\in I_j}r_k\\
&=\sum_{j\ { small}}\sum_{k\in I_j}r_k
+\sum_{j{\  large}}\sum_{k\in I_j}r_k\\
&\leq \Big(\frac{\log C''}{\log 2}+2\Big)4C''m+\sum_{j{\  large}} 2^{j}2^{-j+2}\alpha m\\
&\leq C^*m\log m,
\end{align*}
where $C^*$ depends on $\alpha$ and the previous constants. This contradicts (\ref{liouville-e3}) if $C$ was chosen large enough.

\hfill$\square$


\section{The construction of $\Gamma$}\label{section-gamma}

The proof of Theorem \ref{main thm} will be complete if we show that for sets $A,B$ as in the theorem, the function $\phi_t''(\xi)$ is SLV-structured for each $t$. 
We will do this by finding sets $\Delta_A,\Delta_B\subset\rr$ and $\Gamma=\Gamma(t)\subset[0,1]$ such that
\begin{equation}\label{e-add1a}
|P_{2,A}''|\geq L^{-C_1m}\text{ on }\Delta_A,
\end{equation}
\begin{equation}\label{e-add1b}
|P_{2,B}''(x)|\geq L^{-C_1m}\text{ on }\Delta_B,
\end{equation}
\begin{center}and\end{center}
\begin{equation}\label{e-add2a}
\Gamma\subset\Delta_A\cap t^{-1}\Delta_B,
\end{equation}
\begin{equation}\label{e-add2b}
|\Gamma|\geq C_2KL^{-m}.
\end{equation}
The inequalities (\ref{e-add1a}) -- (\ref{e-add2a}) imply (\ref{slv-e1}), since then
$$
|P_{2,A}''(x)P_{2,B}''(tx)|\geq L^{-2C_1m}\text{ on }\Delta_A\cap t^{-1}\Delta_B.
$$
Hence $\phi_t''(\xi)$ is SLV-structured with the SLV set $\Gamma$. Recall also from Section \ref{reduction steps} that $K=L^{c^*m}$ for some constant $c^*\approx \sqrt{\epsilon_0}$, which we may choose to be arbitrarily small by letting $\epsilon_0$ be small enough. Thus (\ref{e-add2b}) will follow if we can prove that 
\begin{equation}\label{e-add3a}
|\Gamma|\geq C_2L^{-(1-\epsilon)m}\hbox{ for some }\epsilon>0.
\end{equation}

\subsection{Example with repeated zeroes}\label{repzero}

As a motivating example, we will first construct $\Gamma$ for the self-similar set with $L=5$ appearing in Section \ref{ssv fail} and for the direction $\theta=0$. Then $\phi_0(\xi)=\frac{1}{5}G(e^{2\pi i\xi})$, where $G(x)=1+x^3+x^4+x^8+x^9$. The failure of the SSV property for $\phi_0$ is due to the fact that $\Phi_{12}(x)|G(x)$ and 12 is relatively prime to $5$. The interested reader may check that $G(x)$ has no other roots on the unit circle.

We first construct a set $\Delta_0$ disjoint from the set of small values of $\phi_0$. 
Let $\Lambda=\{\frac{1}{12}, \frac{5}{12}, \frac{7}{12}, \frac{11}{12}\}+\zz$, so that $e^{2\pi i\lambda}$ for $\lambda\in\Lambda$ are exactly the zeroes of $\Phi_{12}$. We want $\Delta_0$ to avoid a neighbourhood of $\Lambda$. The key observation is that all points of $\frac{1}{6}\zz$ are at distance at least $1/12$ from $\Lambda$, hence we may take $\Delta_0$ to be a neighbourhood of $\frac{1}{6}\zz$. We are using here that 6 divides 12, but $\phi_0$ does not vanish at any 6-th root of unity; this is the property that we will try to generalize in the next subsection.

We now turn to the details. Let 
$$\Delta_0=\frac1{6}\zz+\Big(-\frac{\eta}{12},\frac{\eta}{12}\Big)$$
for some $\eta\in(0,1)$. Then there is a constant $c=c(\eta)>0$ such that 
$$\phi_0(\xi)\geq c\hbox{ for }\xi\in\Delta_0.$$
By scaling, we also have
$$\phi_0(5^j\xi)\geq c\hbox{ for }\xi\in\Delta_j:= \frac{5^{-j}}{6}\zz+\Big(-\frac{5^{-j}\eta}{12},\frac{5^{-j}\eta}{12}\Big).
$$
Let $\Delta=\bigcap_{j=0}^{m-1} \Delta_j$, then 
$$\prod_{j=0}^{m-1}|\phi_0(L^j\xi)|^2\geq c^{2m}=L^{-C_1m}\hbox{ for }\xi\in\Delta$$
with $C_1=\frac{\log L}{2\log(1/c)}$. 

It remains to prove that we can choose an $\eta\in(0,1)$ and a set $\Gamma\subset[0,1]$ of size at least $C_25^{-(1-\epsilon)m}$ so that $\Gamma-\Gamma\subset\Delta$. We fix $\eta=1/2$, and let $\tau=(\tau_0,\dots,\tau_{m-1})$ range over all sequences with $\tau_j\in\{0,1,2,3\}$. Define
$$
\Gamma_{\tau,j}=\frac{5^{-j}}{6}\Big(\frac{\tau_j}{4}+\zz\Big)+\Big(0,\frac{5^{-j}}{24}\Big),\ j=0,1,\dots,m-1,
$$
$$
\Gamma_\tau:=[0,1]\cap \bigcap_{j=0}^{m-1}\Gamma_{\tau,j}
$$
Then $\Gamma_{\tau,j}-\Gamma_{\tau,j}\subset\Delta_j$, so that $\Gamma_\tau-\Gamma_\tau\subset\Delta$. Moreover, we have
$$
\bigcup_{\tau\in \{0,1,2,3\}^m}\Gamma_\tau=[0,1]$$
except for the zero measure set of interval endpoints. Hence there is at least one $\tau$ such that $|\Gamma_\tau|\geq 4^{-m}$, which is greater than
$5^{-(1-\epsilon)m}$ for $0<\epsilon<1-\frac{\log 4}{\log 5}$.


\subsection{A more general case}\label{gen gamma}

Let $A,B$ be as in Theorem \ref{main thm}, and fix $m\in\nn$. Our goal is to construct $\Delta_A,\Delta_B\subset\rr$ and $\Gamma\subset[0,1]$ so that (\ref{e-add1a})--(\ref{e-add2a}) and (\ref{e-add3a}) hold.

We begin with $\Delta_A$. Let 
\begin{equation}\label{cond-e0}
s_A:=\text{lcm}\{s:\Phi_s|A''\}=\text{lcm}\{s:\Phi_s|A\text{ and }(s,L)=1\}.
\end{equation}
Then all zeroes of $\phi_A''$ lie in the set $\frac1{s_A}\zz$. We will assume that $s_A>1$,
since otherwise $\phi_A''\equiv 1$ and there is nothing to prove. 
Write $s_A=s_{1,A}s_{2,A}$ with $s_{1,A},s_{2,A}\geq 1$ so that
\begin{equation}\label{cond-1}
\Phi_q(x)\text{ does not divide }A(x)\text{ for any }q|s_{1,A}
\end{equation}

It will be to our advantage to make $s_{1,A}$ as large as possible. It is easy to see that we may choose $s_{1,A}>1$ whenever $s_A>1$. Indeed, let $p$ be any prime divisor of $s_A$, then
$\Phi_p$ cannot divide $A$, since $\Phi_p(1)=p$ and $A(1)=|A|$ are relatively prime by (\ref{cond-e0}). In particular, (\ref{cond-1}) holds with $s_{1,A}=p$. However, this simple choice of $s_{1,A}$ will not always be sufficient.

Fix a choice of $s_{1,A},s_{2,A}$ as in (\ref{cond-1}). Then $\phi_A''(\xi)\neq 0$ for $\xi\in\frac1{s_{1,A}}\zz$, so that
\begin{equation}\label{cond-1a}
\{\xi:\phi_A''(\xi)=0\}\subseteq \frac1{s_{1,A}s_{2,A}}\zz\setminus\frac1{s_{1,A}}\zz
\end{equation}
The key point in (\ref{cond-1a}) is that $\frac1{s_{1,A}}\zz$ is separated by distance at least $\frac{1}{s_{1,A}s_{2,A}}$ from the zeroes of $\phi_A''$. We therefore define $\Delta_{0,A}$ as follows:
$$
\Delta_{0,A}:=\frac1{s_{1,A}}\zz+\Big(\frac{-\eta}{s_{1,A}s_{2,A}},\frac{\eta}{s_{1,A}s_{2,A}}\Big)
$$
for some $\eta\in(0,1)$ to be chosen later. Then 
$$|\phi_A''(\xi)|>c_A\text{ for all }\xi\in\Delta_{0,A}$$
for some constant $c_A=c_A(\eta)>0$. We will not need to worry about the size of $c_A$, as this will only affect the harmless constants in (\ref{e-add1a}). However, it will be important for us to be able to take $\eta$ close to 1.

Let $\Delta_A:=\bigcap_{j=0}^{m-1}\Delta_{j,A}$, where
$$
\Delta_{j,A}:=\frac{L^{-j}}{s_{1,A}}\zz+\Big(\frac{-L^{-j}\eta}{s_{1,A}s_{2,A}},\frac{L^{-j}\eta}{s_{1,A}s_{2,A}}\Big)
$$
Then by scaling,
$$\prod_{j=0}^{m-1}|\phi_A''(L^j\xi)|^2\geq c_A^{2m} \hbox{ for }\xi\in\Delta_A.$$

The set $\Delta_B$ is constructed similarly.
It remains to find a sufficiently large set $\Gamma$. 
This can be done under an additional assumption on $A$ and $B$, which we now state.

\begin{proposition}\label{compatible}
Fix $t\in(0,1)$.
Suppose that we can write $s_A=s_{1,A}s_{2,A}$ with $s_{1,A},s_{2,A}>1$ so that: 
\begin{itemize}
\item  $s_{2,A}<|A|$,
\item $\Phi_q(x)$ does not divide $A(x)$ for any $q|s_{1,A}$,
\end{itemize}
and similarly for $B$. 
Then there is a set $\Gamma\subset [0,1]$ obeying \eqref{e-add2a} and \eqref{e-add3a}.
Consequently, $\phi_t''$ is SLV-structured with the SLV set $\Gamma$.
\end{proposition}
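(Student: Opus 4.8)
The plan is to realise $\Gamma$ as the intersection of two explicit ``aligned Cantor sets'' — one carved out to live inside $\Delta_A$ and one inside $t^{-1}\Delta_B$ — glued together by a single pigeonholed translation. Throughout write $L=|A|\,|B|$. Recall from the excerpt that, for any fixed $\eta\in(0,1)$, the hypotheses $s_{1,A},s_{2,A}>1$ and $\Phi_q\nmid A$ for $q\mid s_{1,A}$ already force $|\phi_A''|\ge c_A(\eta)>0$ on $\Delta_{0,A}$, hence $\prod_{j=0}^{m-1}|\phi_A''(L^j\xi)|^2\ge c_A^{2m}$ on $\Delta_A$, and likewise for $B$; this is (\ref{e-add1a})--(\ref{e-add1b}) with a constant $C_1$ independent of $m$ and of $t$. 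So the whole task is to produce $\Gamma\subset[0,1]$ with $\Gamma-\Gamma\subset\Delta_A\cap t^{-1}\Delta_B$ and $|\Gamma|\ge L^{-(1-\epsilon)m}$ for some $\epsilon>0$, i.e. (\ref{e-add2a}) and (\ref{e-add3a}).

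First I would fix $\eta_A,\eta_B\in(0,1)$ close enough to $1$ that $w_A:=\lfloor\eta_A L/s_{2,A}\rfloor\ge L/s_{2,A}-1$ and $w_B:=\lfloor\eta_B L/s_{2,B}\rfloor\ge L/s_{2,B}-1$, with $\eta_A L/s_{2,A},\,\eta_B L/s_{2,B}\notin\Z$, and write $p_j:=L^{-j}/s_{1,A}$, $p_j^B:=L^{-j}/(ts_{1,B})$. For the $A$-side, let $\Gamma^A_j$ be the $p_j$-periodic set equal to $[0,w_Ap_{j+1})$ on $[0,p_j)$, and put $\Gamma_A:=[0,1]\cap\bigcap_{j=0}^{m-1}\Gamma^A_j$. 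Since each $\Gamma^A_j$ is a union of translates by $p_j\Z$ of exactly $w_A$ consecutive cells of the $p_{j+1}\Z$-grid, these sets are nested in the Cantor sense: on each interval of $\Gamma^A_j$ the set $\Gamma^A_{j+1}$ occupies exactly the fraction $w_A/L$, and since $[0,1]$ holds exactly $s_{1,A}$ of the $p_0$-periods this gives $|\Gamma_A|=(w_A/L)^m$ \emph{exactly}. Moreover $\Gamma^A_j-\Gamma^A_j=p_j\Z+(-w_Ap_{j+1},w_Ap_{j+1})\subset p_j\Z+(-\tfrac{\eta_A}{s_{2,A}}p_j,\tfrac{\eta_A}{s_{2,A}}p_j)=\Delta_{j,A}$, because $w_A<\eta_A L/s_{2,A}$; hence $\Gamma_A-\Gamma_A\subset\bigcap_j\Delta_{j,A}=\Delta_A$. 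Carrying out the identical construction with $p_j,w_A$ replaced by $p_j^B,w_B$ produces a $p_0^B$-periodic set $\Gamma'_B\subset\R$ of density $(w_B/L)^m$ with $\Gamma'_B-\Gamma'_B\subset\bigcap_j t^{-1}\Delta_{j,B}=t^{-1}\Delta_B$; keeping $\Gamma'_B$ on all of $\R$ rather than restricting to $[0,1]$ is what lets us avoid any trouble when $t$ is small.

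Now average in the translation variable: by $p_0^B$-periodicity of $\Gamma'_B$,
$$
\frac{1}{p_0^B}\int_0^{p_0^B}\bigl|\Gamma_A\cap(\Gamma'_B+x)\bigr|\,dx=|\Gamma_A|\cdot(w_B/L)^m=\Bigl(\frac{w_A}{L}\Bigr)^m\Bigl(\frac{w_B}{L}\Bigr)^m,
$$
so there is $x_0$ with $|\Gamma_A\cap(\Gamma'_B+x_0)|\ge(w_Aw_B/L^2)^m$. Put $\Gamma:=\Gamma_A\cap(\Gamma'_B+x_0)$; then $\Gamma\subset\Gamma_A\subset[0,1]$, $\Gamma-\Gamma\subset(\Gamma_A-\Gamma_A)\cap(\Gamma'_B-\Gamma'_B)\subset\Delta_A\cap t^{-1}\Delta_B$, and $|\Gamma|\ge(w_Aw_B/L^2)^m$. (If $s_A=1$ or $s_B=1$ the corresponding $\phi''$ is $\equiv1$, one of the two Cantor sets is dropped, and the same translation averaging is applied to the other alone.)

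The one genuinely delicate point — and the only place the cardinality hypothesis enters — is checking that $(w_Aw_B/L^2)^m\ge L^{-(1-\epsilon)m}$ for some $\epsilon>0$, i.e. that $w_Aw_B>L$. From $w_A\ge L/s_{2,A}-1$ and $w_B\ge L/s_{2,B}-1$ we get $w_Aw_B\ge(L-s_{2,A})(L-s_{2,B})/(s_{2,A}s_{2,B})$, and a short rearrangement shows this exceeds $L$ precisely when $L>(s_{2,A}+1)(s_{2,B}+1)-1-\tfrac{s_{2,A}s_{2,B}}{L}$, which holds because $s_{2,A}<|A|$ and $s_{2,B}<|B|$ give $(s_{2,A}+1)(s_{2,B}+1)\le|A|\,|B|=L$. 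Taking $\epsilon:=\log(w_Aw_B/L)/\log L>0$ then yields (\ref{e-add3a}), hence (\ref{e-add2b}), and together with (\ref{e-add1a})--(\ref{e-add1b}) this shows $\phi''_t$ is SLV-structured with SLV set $\Gamma$, completing Proposition \ref{compatible}. I expect everything besides this arithmetic inequality — in particular the verification that the aligned Cantor intersections have the stated \emph{exact} measures, which is the reason for insisting on grid-aligned pieces rather than the generic construction of Section \ref{repzero} — to be routine.
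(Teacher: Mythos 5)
Your proposal is correct, and it reaches the conclusion by a somewhat different mechanism than the paper. The paper keeps, for each scale $j=0,\dots,m-1$ and each of the two factors, a separate continuous translation parameter $\tau_{j,A},\tau_{j,B}\in[0,M]$, averages the product of indicators over all $2m$ parameters, factors the expectation, and pigeonholes; the per-scale density is then $\approx\frac{(M-1)(M-t^{-1})}{M^2}\frac{\eta^2}{s_{2,A}s_{2,B}}$, so the hypothesis enters only through $s_{2,A}s_{2,B}<L$ after letting $\eta\to1$ and $M\to\infty$ (with $M$ depending on $t$). You instead observe that within a single factor no pigeonholing is needed at all: because the lattices $\frac{L^{-j}}{s_{1,A}}\zz$ are nested with ratio exactly $L$, the grid-aligned blocks of $w_A$ consecutive next-scale cells intersect across scales with \emph{exactly} the generic density $(w_A/L)^m$, and your verification of this nesting is sound; the only averaging left is a single translation $x_0$ gluing the $A$-family to the periodic $B$-family, where exactness again follows from periodicity of $\Gamma'_B$. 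This buys a cleaner, fully explicit $\Gamma$ and a measure bound $(w_Aw_B/L^2)^m$ uniform in $t$ (the paper's constant degrades as $t\to0$ through the $(M-t^{-1})/M$ factor), at the price of a slightly stronger arithmetic requirement: with $w_A\geq L/s_{2,A}-1$, $w_B\geq L/s_{2,B}-1$ you need $(L-s_{2,A})(L-s_{2,B})>Ls_{2,A}s_{2,B}$ rather than just $s_{2,A}s_{2,B}<L$; your reduction of this to $(s_{2,A}+1)(s_{2,B}+1)\leq L$, which follows from the integrality of $s_{2,A}\leq|A|-1$ and $s_{2,B}\leq|B|-1$, is correct, so within the hypotheses of Proposition \ref{compatible} both routes give \eqref{e-add2a} and \eqref{e-add3a}, and hence the SLV structure, with the same logic as in the paper for passing from \eqref{e-add3a} to \eqref{e-add2b}. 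The paper's averaging argument would, however, extend verbatim to any factorization with merely $s_{2,A}s_{2,B}<L$, a marginally wider range than your floor-based inequality tolerates.
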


\begin{proof}
The proof uses a pigeonholing argument somewhat similar to that in the last subsection, but with continuous translation parameters. Let
$$\Gamma_{j,A}:=\frac{L^{-j}}{s_{1,A}}\zz+\Big(0,\frac{L^{-j}\eta}{s_{1,A}s_{2,A}}\Big),\ j=0,1,\dots,m-1,$$
and similarly for $B$. Let also $M>0$ be a large number. For $x\in [0,1]$, consider the average
$$
\Psi(x)=\frac{1}{M^{2m}}\int_0^M\dots \int_0^M \prod_{j=0}^{m-1}
\one_{\Gamma_{j,A}}(x+\tau_{j,A})
\prod_{j=0}^{m-1} \one_{t^{-j}\Gamma_{j,B}}(x+\tau_{j,B})$$
$$d\tau_{0,A}\dots d\tau_{m-1,A}d\tau_{0,B}\dots d\tau_{m-1,B}.
$$
This clearly factors as a product of integrals. We have
$$\int_0^M \one_{\Gamma_{j,A}}(x+\tau_{j,A})d\tau_{j,A}
=\Big| [0,M]\cap (\Gamma_{j,A}-x)\Big|
\geq (M-1)\frac{\eta}{s_{2,A}},$$
since the interval $[0,M]$ contains $\lfloor M\rfloor \geq M-1$ disjoint intervals of length $1$, each of which intersects $\Gamma_{j,A}$ in a set of measure exactly $\frac{\eta}{s_{2,A}}$. By a similar argument,
$$\int_0^M \one_{t^{-1}\Gamma_{j,B}}(x+\tau_{j,B})d\tau_{j,B}
=\Big| [0,M]\cap (t^{-1}\Gamma_{j,B}-x)\Big|
\geq (M-t^{-1}) \frac{\eta}{s_{2,B}}
$$
It follows that for all $x\in[0,1]$,
$$
\Psi(x)\geq \Big(\frac{(M-1)(M-t^{-1})}{M^2}\frac{\eta^2}{s_{2,A}s_{2,B}}\Big)^m.
$$
In particular, the same lower bound holds for 
$\int_0^1\Psi(x)dx$. 
But on the other hand, rearranging the integrals we get that
$$
\int_0^1\Psi(x)dx
=\frac{1}{M^{2m}}\int_0^M\dots \int_0^M \Big|[0,1]\cap \bigcap_{j=0}^{m-1}(\Gamma_{j,A}-\tau_{j,A})
\cap \bigcap_{j=0}^{m-1}(t^{-1}\Gamma_{j,B}-\tau_{j,B})\Big|$$
$$d\tau_{0,A}\dots d\tau_{m-1,A}d\tau_{0,B}\dots d\tau_{m-1,B}.
$$
Hence we may choose the translation parameters $\tau_{0,A}$,...,$\tau_{m-1,B}$ so that the set
in the integrand has measure at least
$$
\Big(\frac{(M-1)(M-t^{-1})}{M^2}\frac{\eta^2}{s_{2,A}s_{2,B}}\Big)^m.
$$
Call this set $\Gamma$. If $s_{2,A}< |A|$ and $s_{2,B}< |B|$, then $s_{2,A}s_{2,B}<L$,
so that (\ref{e-add3a}) holds if $M$ was chosen large enough (depending on $t$) and $\eta$ is
sufficiently close to 1. Finally, (\ref{e-add2a}) holds because 
$$(\Gamma_{j,A}+\tau)- (\Gamma_{j,A}+\tau)\subset\Delta_{j,A}$$
for all $j=0,1,\dots,m-1$ and $\tau\in\rr$, and similarly for $B$.


\end{proof}


\section{The cyclotomic divisors of $A(x)$}\label{small ab}

We asserted that our assumptions were valid when $|A|,|B|\leq 6$, or when $A(x),B(x)$ have each at most one ``bad" cyclotomic divisor. To demonstrate this, a change of perspective is in order. To study whether $\Phi_s$ divides $A$ for different $s$-values, we will evaluate $A$ at a particular $s$-th root of unity $\zeta_s$ and examine the set of summands of $A(\zeta_s)$ directly for varying $s$ values, taking advantage of projective relationships of the form $(\zeta_{mn})^m=\zeta_{n}$.

\subsection{Sums of roots of unity}

Let $\zeta$ be a primitive $s$-th root of unity.
We can profitably study whether $\Phi_s|A$ by examining the following unordered tuple, which may have repetitions:
\begin{equation}\label{As}
\mathcal{A}_s=\{\zeta^a\}_{a\in A}.
\end{equation}
Then $\Phi_s|A$ if and only if
\begin{equation}\label{phis div a}
\sum_{a\in A} \zeta^a=\sum_{\eta\in\mathcal{A}_s}\eta=0,
\end{equation}
(Note that $\zeta$ and $\mathcal{A}_s$ is defined modulo the Galois group, but whether (\ref{phis div a}) holds does not depend on the choice of the representative.)

More generally, we will consider equations of the form
\begin{equation}\label{lamprelation}
\sum_{j=1}^J z_j\zeta_j=0,
\end{equation}
where $\zeta_j$ are roots of unity. 
Such equations have been studied e.g. in \cite{deB}, \cite{LL}, \cite{Mann}, \cite{PR}, \cite{Re1}, \cite{Re2}, \cite{schoen}.

We will use $\mathcal{U}$ to denote the set of all roots of unity. We also define $\zz\mathcal{U}$ 
of all finite unordered tuples (with repetitions allowed) $\{z_j\zeta_j\}_{j=1}^J$, $z_j\in\zz$, $\zeta_j\in\mathcal{U}$. (The numbers $z_j$ are sometimes called \textbf{weights}.)
This is a module over $\zz$ when equipped with the following equivalence relation and operations of summation and multiplication by integers.
\begin{itemize}
\item We regard $\mathcal{T}_1=\{z_j\zeta_j\}_{j=1}^J,\mathcal{T}_2=\{w_j\eta_j\}_{j=1}^{J'}$ as \textbf{equivalent} if
$$\sum_{j:\zeta_j=\zeta}z_j=\sum_{j:\eta_{j}=\zeta}w_j$$
for all $\zeta\in\mathcal{U}$
\item Summation is concatenation modulo the equivalence relation:
$$\mathcal{T}_1+\mathcal{T}_2=\{a_j\alpha_j\}_{j=1}^{J+J'},\text{ where}$$
$$a_j=z_j,\alpha_j=\zeta_j\text{ for }j=1,...,J, a_{j+J}=\eta_j,\alpha_{j+J}=\eta_j\text{ for }j=1,...,J'$$
equivalently, collect like $\zeta\in\mathcal{U}$ and sum coefficients; i.e.,
$$\mathcal{T}_1+\mathcal{T}_2=\{(\sum_{j:\zeta_j=\zeta}{z_j}+\sum_{j:\eta_j=\zeta}w_j)\zeta\}_{\zeta\in\{\alpha_k\}_{k=1,...,J+J'}}.$$
\item For $z\in\zz$, $z\mathcal{T}_1:=\{(zz_j)\zeta_j\}\in\zz\mathcal{U}$.
\end{itemize}

In the above, the elements of $\zz\mathcal{U}$ may be viewed as formal linear combinations of roots of unity, 
where $\mathcal{U}$ is regarded simply as a set with no structure. Of course, $\zz\mathcal{U}$ may also be equipped with a natural structure of a group ring, but we will not use this here, except that we need to define \textbf{rotations} of elements of $\zz\mathcal{U}$ by roots of unity. 
For $\eta\in\mathcal{U}$, $\eta\mathcal{T}_1:=\{z_j(\eta\zeta_j)\}_{j=1}^J$, where $\eta\zeta_j$ is multiplied in $\mathcal{U}$. 

 We use set notation for elements of $\zz\mathcal{U}$, reserving summations for genuine sums of complex numbers. 
To distinguish between these more clearly, we define the evaluation map $\sigma$. For any $\mathcal{T}=\{z_j\zeta_j\}_{j=1}^J\in\zz\mathcal{U}$,
\begin{equation}\label{sigmap}
\sigma(\mathcal{T}):=\sum_{j=1}^J z_j\zeta_j
\end{equation}
For example, $\{-1\cdot 1\}$ is different in $\zz\mathcal{U}$ from $\{1\cdot e^{\pi i}\}$, but $\sigma(\{-1\cdot 1\})=\sigma(\{1\cdot e^{\pi i}\})=-1$. We will often omit the coefficient $1$ where it occurs, e.g. 
$\{e^{\pi i}\}=\{1\cdot e^{\pi i}\}$.

%

In this language, the kernel of $\sigma$ is exactly the set of those  $\{z_j\zeta_j\}\in\zz\mathcal{U}$ for which \eqref{lamprelation} holds. Lam and Leung \cite{LL} make use of this observation by casting such problems in the language of tensor products over group rings. We do not use this machinery here, though.

For $n\geq 2$, let \textbf{the $n$-gon in standard position} refer to the unordered tuple $\mathcal{P}_n:=\{e^{2\pi ij/n}\}_{j=0}^{n-1}$. An $n$-gon (not necessarily in standard position) is any rotation $\eta\mathcal{P}_n$ of $\mathcal{P}_n$, where $\eta$ is a root of unity of arbitrary order. Collectively, they are called \textbf{polygons}. 

A theorem of R\'edei-de Bruijn-Schoenberg \cite{Re1}, \cite{Re2}, \cite{deB}, \cite{schoen} (see also \cite{LL}) states that all relations of the form \eqref{lamprelation} are generated by prime polygons, in the following sense: if $\sigma(\mathcal{T})=0$, then 
\begin{equation}\label{lamprey}
\mathcal{T}=\sum_{j=1}^J z_j\eta_j\mathcal{P}_{p_j},
\end{equation}
where $z_j\in\zz$, $\eta_j$ are rotations and $p_j$ are (not necessarily distinct) primes. 
We will therefore refer to any $\{z_j\zeta_j\}_{j=1}^J\in\zz\mathcal{U}$ satisfying  
\eqref{lamprelation} as a \textbf{Linear Multi-Polygon Relation} (LMPRe, or lamprey, for short). 
For our purposes, the genuinely multi-polygon relations with $J\geq 2$ will be of particular importance.

A lamprey is \textbf{irreducible} if no proper subset of it is a lamprey. For example, any prime polygon $\mathcal{P}_p$ is irreducible. It is tempting to think that all irreducible lampreys have this form; however, this is not true. For example, let
$$\mathcal{L}_{5:3}:=\{e^{2\pi i/5},e^{4\pi i/5},e^{6\pi i/5},e^{8\pi i/5},e^{5\pi i/3},e^{7\pi i/3}\}.$$
(In \cite{PR}, $\mathcal{L}_{5:3}$ is called an irreducible relation between roots of unity of type $R(5:3)$.) 
We have
$$0=\sum_{\zeta\in\mathcal{P}_5}\zeta - \sum_{\zeta'\in\mathcal{P}_3}\zeta'+[e^{2\pi i/3}+e^{4\pi i/3}]\sum_{\zeta''\in\mathcal{P}_2}\zeta''=\sum_{j=1}^4 e^{2\pi ij/5} + e^{5\pi i/3} + e^{7\pi i/3}$$
That is, $\mathcal{L}_{5:3}$ sums to $0$, but none of its proper subsets do. Note that the weights of $\mathcal{L}_{5:3}$ are all positive; one can say $\mathcal{L}_{5:3}\in\nn\mathcal{U}$. In fact, this is the smallest irreducible positive-weighted lamprey which is not a $p$-gon, and the number of possible cases rapidly grows beyond this point, though \cite{PR} classifies all such cases for sets having at most $12$ points. 

We will also need to study power mappings. The reason for this is that for $\zeta$ a primitive $mn$-th root of unity, $A(\zeta^m)=0$ if and only if $\Phi_n|A$, that is, if and only if $\{\zeta^{am}\}_{a\in A}$ is a lamprey. However, $\{\zeta^a\}_{a\in A}$ may or may not be a lamprey in such a case, so that $\Phi_{mn}|A$ and $\Phi_{mn}\nmid A$ are both possible. Conversely, the opposite could easily hold as well.

For any $\mathcal{T}=\{z_j\zeta_j\}_{j=1}^J\in\zz\mathcal{U}$ and for any $m\in\zz^+$, we define
\begin{equation}\label{pmap}
\pi_m(\mathcal{T}):=\{z_j\zeta_j^m\}_{j=1}^J
\end{equation}
%
%
\medskip

\noindent
\textbf{Example:} Recall the definition \eqref{As}. Let $A=\{0,1,2\}$, $s=9$, and $\zeta=e^{2\pi i/9}$. Then $\mathcal{A}_9=\{1,e^{2\pi i/9},e^{4\pi i/9}\}$ is not a lamprey. However, $\mathcal{A}_3=\pi_3(\mathcal{A}_9)=\{1,e^{2\pi i/3},e^{4\pi i/3}\}$ is a lamprey. But then $\mathcal{A}_1=\pi_3(\mathcal{A}_3)=\{1,1,1\}$.

That is, $\mathcal{A}_9$ was not a lamprey. However, taking third powers of the points in $\mathcal{A}_9$ resulted in $\mathcal{A}_3$, a triangle and therefore a lamprey. But taking third powers again, we get $\mathcal{A}_1$, the triple point at $1$.

More generally, for primes $p\neq q$, $\pi_p$ sends any $q$-gon to another $q$-gon; in fact, $\pi_p(\eta\mathcal{P}_q)=\eta^p\mathcal{P}_q$; that is, $\pi_p\textbf{ preserves }q\textbf{-gons}$. Conversely, $\pi_p(\eta\mathcal{P}_p)=p\cdot\eta^p:=\{\eta^p,...,\eta^p\}$; that is, $\pi_p\textbf{ collapses }p\textbf{-gons}$ (to the point $\eta^p$).
%

\subsection{The SSV property for $|A|=2,3,4,6$}

Now we would like to show that for $|A|=2,3,4,6$ and for $|B|$ arbitrary, $\phi_A$ has the (log-)SSV property (with no $\phi_A''$ factor whatsoever).

Suppose first that $|A|=6$.
We will show that the cyclotomic roots of $A$ can only be zeroes of $\Phi_s$ for some $s$ divisible by 2 or 3, so that in particular $(s,L)\neq 1$.

Suppose that $\Phi_s$ divides $A$, and consider the lamprey $\mathcal{A}_s:=\{\zeta^a\}_{a\in A}$.
A 6-point lamprey can only take these forms:

\begin{itemize}
\item $\mathcal{A}_s$ can be a union of triangles
\item $\mathcal{A}_s$ can be a union of three line segments ($2$-gons)
\item $\mathcal{A}_s$ can be a rotation of $\mathcal{L}_{5:3}$
\end{itemize}
(This is e.g. in \cite{PR}. Note that a $6$-gon  belongs to both of the first two cases.)

%

Recall that we are assuming that $0\in A$. Hence in the third case, $\mathcal{A}_s$ consists of $30$-th roots of unity, some of them primitive. It follows that  $s=30\cdot\text{gcd}(A)$, so that $s$ and $L$ are both divisible by 6.

Now look at the first case. Again, one triangle must be in standard position, so that $s$ must be divisible by $3$. Similarly, in the second case $s$ must be divisible by $2$.

The cases $|A|=2,3,4$ are easier and left to the reader.


\subsection{The case $|A|=5$}\label{a is 5}

The case $|A|=5$ is different. For such sets, we have seen in Section \ref{ssv fail} that it is possible for $\Phi_s$ with $(s,L)=1$ to divide $A(x)$. Indeed, the lamprey $\{\zeta^a\}_{a\in A}$ can be a union of a triangle and a line segment, so that $s$ need only be divisible by 6 and can quite easily be relatively prime to $L=|A||B|=5|B|$. (The lamprey is {\it parasitic}: it sucks up most of the integral $\int_{L^{-m}}^1|P_1|^2d\xi$, leaving only a skeletal good set $\Gamma$.) Moreover, there may be many such divisors $\Phi_s$. 

We now show that we can ``sidestep the parasitic lamprey.''

\begin{proposition}\label{eelimination}
Let $A\subset\nn$ with $|A|=5$, and let $S_A=\{r:\ \Phi_r(x)|A(x)$ and $(r,|A|)=1\}$. Then there are $j_0,k_0$, depending only on $A$, such that any $s\in S_A$ has the form $s=2^{j_0}3^{k_0}M_s$ for some $M_s$ with $(M_s,6)=1$.
In particular, if $s_0=lcm(S_A)$, then $\mathcal{A}_q$ is not a lamprey for any $q|\frac{s_0}{2}$ or $q|\frac{s_0}{3}$.

\end{proposition}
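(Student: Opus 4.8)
The plan is to combine the classification of short vanishing sums of roots of unity with $2$-adic and $3$-adic bookkeeping on $A$; throughout, $v_p$ denotes the $p$-adic valuation. First I would fix $s\in S_A$ with $s>1$; note $(s,5)=1$, and that $s$ is not a prime power (if $\Phi_{p^k}\mid A$ then $p\mid A(1)=5$, so $p=5$). The tuple $\mathcal{A}_s=\{\zeta_s^{\,a}\}_{a\in A}$ is a five-term lamprey. By the R\'edei--de Bruijn--Schoenberg theorem $(\ref{lamprey})$ it decomposes into irreducible lampreys, each of which has at least two points and at least three unless it is a $2$-gon; the only partition of $5$ into such parts is $5$ or $2+3$. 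A size-$5$ irreducible positive lamprey is a rotated regular pentagon (the smallest irreducible positive lamprey that is not a polygon has six points; cf. \cite{PR}), but $\mathcal{A}_s$ cannot be such a pentagon, since then $\zeta_5=\zeta_s^{\,a-a'}$ for some $a,a'\in A$, forcing $5\mid s$. Hence $\mathcal{A}_s$ is a triangle together with a $2$-gon, i.e. there is a partition $A=A_1(s)\sqcup A_2(s)$ with $|A_1(s)|=3$, $|A_2(s)|=2$, so that $\{\zeta_s^{\,a}\}_{a\in A_1(s)}$ is a rotated equilateral triangle and $\{\zeta_s^{\,a}\}_{a\in A_2(s)}$ a rotated $2$-gon. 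Reading off congruences: modulo $s$, $A_1(s)$ is a coset of $(s/3)\zz/s\zz$, so $6\mid s$; every pairwise difference inside $A_1(s)$ has $v_3=v_3(s)-1$ and $v_2\ge v_2(s)$; and the difference of the pair $A_2(s)$ has $v_2=v_2(s)-1$ and $v_3\ge v_3(s)$.

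Next I would show $v_3$ is constant on $S_A$. If $s,s'\in S_A$ with $v_3(s)<v_3(s')$, put $j=v_3(s)$. Applying the structure at $s'$, the three elements of $A_1(s')$ are mutually congruent mod $3^{j}$ (their pairwise differences have $v_3\ge v_3(s')-1\ge j$) and the two elements of $A_2(s')$ are congruent mod $3^{j}$, so $A$ meets at most two residue classes mod $3^{j}$; but applying the structure at $s$, the three elements of $A_1(s)$ are pairwise incongruent mod $3^{j}$, so $A$ meets at least three. This contradiction gives $v_3\equiv k_0$ on $S_A$.

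For $v_2$ I would analyse how the partition $A=A_1(s)\sqcup A_2(s)$ depends on the residues of $A$ modulo $3^{k_0}$. The pair $A_2(s)$ always lies in a single class mod $3^{k_0}$, and the triple $A_1(s)$ always fills a full coset of the order-$3$ subgroup $3^{k_0-1}\zz/3^{k_0}\zz$ of $\zz/3^{k_0}\zz$; distinct such cosets being disjoint, at most one is available, and running through the multiplicity profiles of $A$ mod $3^{k_0}$ one checks that $A_1(s)$ is forced except in the profile $(3,1,1)$: three elements $x_1,x_2,x_3$ in one class $\rho$ and two elements $y,z$ in the classes $\rho\pm 3^{k_0-1}$, with $A_1(s)=\{x_i,y,z\}$ and $A_2(s)=\{x_j,x_k\}$ for some partition $\{i\}\sqcup\{j,k\}=\{1,2,3\}$. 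Since then $v_2(s)=v_2(x_j-x_k)+1$, it suffices to prove $v_2(x_1-x_2)=v_2(x_1-x_3)=v_2(x_2-x_3)$ whenever two of these three $2$-gons actually occur; this comes out of the Step-1 inequalities, e.g. if $v_2(x_1-x_3)<v_2(x_2-x_3)$ then $x_1-x_3=(x_1-y)-(x_3-y)$ with $v_2(x_1-y)\ge v_2(x_2-x_3)+1$ forces $v_2(x_3-y)=v_2(x_1-x_3)$, and then $x_3-y=(x_2-y)-(x_2-x_3)$ forces $v_2(x_2-y)=v_2(x_1-x_3)$, contradicting $v_2(x_2-y)\ge v_2(x_1-x_3)+1$ (valid at the level where $x_2$ sits in the triangle). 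Hence $v_2\equiv j_0$ on $S_A$, so every $s\in S_A$ has the form $s=2^{j_0}3^{k_0}M_s$ with $(M_s,6)=1$.

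Finally, since $5\nmid M_s$ for each $s$, we get $s_0=\text{lcm}(S_A)=2^{j_0}3^{k_0}N$ with $(N,30)=1$. If $\Phi_q\mid A$ for some $q\mid s_0/2$, then $q>1$ (as $A(1)\ne 0$) and $(q,5)=1$, so $q\in S_A$ and $v_2(q)=j_0$, while $q\mid 2^{j_0-1}3^{k_0}N$ forces $v_2(q)\le j_0-1$ --- impossible; the identical argument with $3$ replacing $2$ rules out $q\mid s_0/3$, so $\mathcal{A}_q$ is not a lamprey for any $q\mid s_0/2$ or $q\mid s_0/3$. The first and last parts are routine bookkeeping; the substance is in the $v_2$ step, because the triangle/$2$-gon split is only \emph{almost} pinned down by the residues mod $3^{k_0}$, and the real obstacle is the $(3,1,1)$ profile, where one must prevent two different triangles from leaving behind $2$-gons of different $2$-adic size --- which I expect to require precisely the simultaneous use of the triangle and $2$-gon constraints at both levels indicated above.
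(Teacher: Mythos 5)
Your proof is correct, but it runs along a genuinely different track than the paper's. Both arguments start from the same classification input: since $(s,5)=1$, the five-term vanishing sum $\mathcal{A}_s$ cannot be a rotated pentagon (and the smallest irreducible non-polygon relation has six terms, cf. \cite{PR}), so it must split as a rotated triangle plus a rotated $2$-gon, forcing $6\mid s$. From there the paper argues with the power maps $\pi_m$: it strips off the part of $s$ coprime to $6$ (since $\pi_M$ with $(M,6)=1$ preserves triangles and segments), rules out one exponent pair dominating the other (collapsing exactly one polygon leaves a sum that cannot vanish), and then, assuming $j'<j''$, $k'>k''$, computes $\mathcal{A}_{(s',s'')}$ two ways --- as a triple point plus a segment via $\pi_{3^{k_0}}$, and as a genuine triangle plus a double point via $\pi_{2^{j_0}}$ --- which is impossible as multisets. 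You instead translate the triangle/segment structure into valuations of differences of elements of $A$ (triangle differences have $v_3=v_3(s)-1$, $v_2\ge v_2(s)$; the pair difference has $v_2=v_2(s)-1$, $v_3\ge v_3(s)$), get constancy of $v_3$ by counting residue classes mod $3^{v_3(s)}$, and then pin down the triangle/pair partition from the residues of $A$ mod $3^{k_0}$, resolving the one ambiguous profile $(3,1,1)$ by the ultrametric chain combining the triangle constraints at both levels; I checked that chain and it is sound, as is the final lcm/valuation step (which the paper leaves implicit). The trade-off: the paper's collapse argument is shorter and treats $2$ and $3$ symmetrically in a single contradiction, while your route is more computational and asymmetric (the $v_2$ step is genuinely harder, as you note), but it is entirely elementary congruence arithmetic on $A$ itself and yields a bit more structural information, namely that the partition of $A$ into triangle and pair is determined mod $3^{k_0}$ up to the $(3,1,1)$ ambiguity.
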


It follows that $A$ satisfies the assumptions of Proposition \ref{compatible}, with $s_2$ equal to either 2 or 3.

\medskip

\noindent\textbf{Proof.}
Suppose for a contradiction that there are $s'=2^{j'}3^{k'}M'$ and $s''=2^{j''}3^{k''}M''$ such that $\mathcal{A}_{s'},\mathcal{A}_{s''}$ are parasitic lampreys. Since $\pi_{m}$ preserves line segments and triangles when $(m,6)=1$, it follows that  $\pi_{M'}(\mathcal{A}_{s'})=\mathcal{A}_{2^{j'}3^{k'}}$ is still a parasitic lamprey, and similarly for $s''$. We may therefore assume that $M'=M''=1$.

Now we can reduce cases further. Since $\pi_2$ collapses line segments and $\pi_3$ collapses triangles, it is rather immediate that neither $s'|s''$ nor $s''|s'$. So WLOG, $j'<j''$ and $k'>k''$. Let $j_0=j''-j'$, $k_0=k'-k''$. In particular, $\pi_{2^{j_0}}(\mathcal{A}_{s''})=\pi_{3^{k_0}}(\mathcal{A}_{s'})=\mathcal{A}_{(s',s'')}$.

Consider the lamprey $\mathcal{A}_{s'}$. It contains a triangle $\text{tri}_{s'}$ and a line segment $\text{seg}_{s'}$. In particular, $\pi_{3^{k_0}}$ collapses $\text{tri}_{s'}$ to a triple point. Call it $\{\zeta,\zeta,\zeta\}$. Let $\pi_{3^{k_0}}(\text{seg}_{s'})=\{\eta_1,\eta_2\}$; it is a line segment.

Using analogous naming conventions coming from the other side, the map $\pi_{2^{k_0}}$ collapses $\text{seg}_{s''}$ to a double point, but preserves $\text{tri}_{s''}$. As at most one point of $\text{tri}_{s''}$ maps to $\zeta$, $\pi_{2^{k_0}}(\text{seg}_{s''})=\{\zeta,\zeta\}$. But then the other two points of $\text{tri}_{s''}$ must map to the line segment $\{\eta_1,\eta_2\}$, a contradiction.

\hfill$\square$

\subsection{A single divisor}\label{single divisor}

Finally, suppose that $|A|$ is arbitrary, but $A(x)$ has only one cyclotomic divisor $\Phi_s$ such that $(s,L)=1$. Write the lamprey $\mathcal{A}_s$ as a union of irreducible lampreys. Each of those has a decomposition (\ref{lamprey}), and by \cite[Theorem 1]{Mann}, the primes $p_j$ obey $p_j|s$ and $p_j\leq |A|$. Since we are assuming that $(s,|A|)=1$, we must in fact have $p_j<|A|$. We may therefore apply Proposition \ref{compatible} with $s_2=p_j$ for any $j$.



}

\bibliographystyle{amsplain}

\noindent{\sc Bond, {\L}aba: Department of Mathematics, University of British Columbia, Vancouver,
B.C. V6T 1Z2, Canada}

\smallskip
                                                                                     
\noindent{\it  bondmatt@math.ubc.ca, ilaba@math.ubc.ca}

\medskip

\noindent{\sc Volberg: Department of Mathematics, Michigan State University, East Lansing, MI 48824, U.S.A.}

\smallskip
                                                                                     
\noindent{\it  volberg@math.msu.edu}

\end{document}